\newtheorem{theorem}{Theorem}[section]
\newtheorem{corollary}[theorem]{Corollary}
\newtheorem{lemma}[theorem]{Lemma}
\newtheorem{proposition}[theorem]{Proposition}
\theoremstyle{definition}
\newtheorem{example}[theorem]{Example}
\newcommand{\dotp}[2]{\langle #1,\,#2 \rangle}
\newcommand{\R}{{\mathbb R}}
\newcommand{\cH}{\mathcal H}
\newcommand{\argmin}{{\rm argmin}\kern 0.12em}
\newcommand{\ie}{{\it i.e.}\,\,}
\DeclareMathOperator*{\prox}{prox}
\newcommand{\interior}{{\rm int}\kern 0.06em}
\newcommand{\inte}{{\rm int}\kern 0.06em}
\newcommand{\cl}{{\rm cl}\kern 0.06em}
\newcommand{\zer}{{\rm zer}\kern 0.06em}
\newcommand{\gph}{{\rm gph}\kern 0.06em}
\newcommand{\dom}{{\rm dom}\kern 0.06em}
\newcommand{\pr}{{\rm pr}\kern 0.06em}
\newcommand{\n}{{\nabla}}
\newcommand{\p}{{\partial}}
\def\a{\alpha}
\def\b{\beta}
\def\d{\delta}
\def\g{\gamma}
\def\m{\mu}
\def\l{\lambda}
\def\<{\langle}
\def\>{\rangle}
\def\r{\rho}
\def\p{\partial}
\DeclarePairedDelimiter\abs{\lvert}{\rvert}%
\DeclarePairedDelimiter\norm{\lVert}{\rVert}%
\let\oldabs\abs
\def\abs{\@ifstar{\oldabs}{\oldabs*}}
\newcommand{\Rb}{\mathbb R\cup\{+\infty\}}
\let\oldnorm\norm
\def\norm{\@ifstar{\oldnorm}{\oldnorm*}}
\renewcommand*{\backrefalt}[4]{%
\ifcase #1 %
(Not cited)%
\or
(Cited on p.~#2)%
\else
(Cited on pp.~#2)%
\fi
}
\title[Strong Convergence for  a Proximal Point Algorithm]{Combining Strong Convergence, Values Fast Convergence and  Vanishing of Gradients  for  a Proximal Point Algorithm Using Tikhonov Regularization in a Hilbert Space}
\author[A. C. Bagy, Z. Chbani and H. Riahi]{}
\subjclass{37N40, 46N10, 49XX, 90B50, 90C25.}
\keywords{Convex optimization, Heavy-ball method, Tikhonov approximation.}
\begin{document}
\maketitle
\centerline{\scshape Akram Chahid Bagy$^{{\href{mailto:akram.bagy@gmail.com}{\textrm{\Letter}}}}$, Zaki Chbani$^{{\href{mailto:chbaniz@uca.ac.ma}{\textrm{\Letter}}}}$ and Hassan Riahi$^{{\href{mailto:h-riahi@uca.ac.ma}{\textrm{\Letter}}}}$}
\medskip
{\footnotesize
 \centerline{Faculty of Sciences Semlalia, Mathematics, Cadi Ayyad university}
   \centerline{40000 Marrakech, Morroco}
}

\bigskip


\begin{abstract}
In a real Hilbert space $\cH$. Given any function $f$ convex differentiable whose solution set $\argmin_{\cH}\,f$ is nonempty, by considering the Proximal Algorithm $x_{k+1}=\text{prox}_{\b_k f}(d x_k)$, where $0<d<1$ and $(\b_k)$ is nondecreasing function, and by assuming some assumptions on $(\b_k)$, we will show that the value of the objective function in the sequence generated by our algorithm converges in order $\mathcal{O} \left( \frac{1}{ \beta _k} \right)$  to the global minimum of the objective function, and that the generated sequence converges  strongly to the minimum norm element of $\argmin_{\cH}\,f$, we also obtain a convergence rate of gradient toward zero. Afterwards, we extend these results to non-smooth convex functions with extended real values. 
\end{abstract}

\section{Introduction}
In a Hilbert setting  $\mathcal{H}$, we denote by $\langle\cdot ,\cdot\rangle$ and  $\Vert \cdot\Vert$ the associate inner product and norm respectively. Let $f:\,\cH\rightarrow ]-\infty , +\infty]$ be a  proper lower semicontinuous convex function.
Consider the following  optimization problem 
\begin{equation}\label{minf}
\tag*{$(\mathcal{P})$} \min \left\lbrace f(x):\, x\in \mathcal{H}\right\rbrace ,
\end{equation}
 whose global solution set $S:=   \argmin_{\mathcal{H}} f $ is assume to be  nonempty.

The goal of this paper is to construct a  suitable  sequence $(x_k)$
in order to jointly obtain the fast rate of convergence for  the objective function $f(x_k)$ to the optimal  value $f^\star=\min_{\mathcal{H}}f$, the fast rate norm convergence  to zero of a selected gradient in the subdifferential $\partial f(x_k)$, and also the strong convergence of the iterates $x_k$ to $x^\star$ the element of minimum norm of $S$.
As a guide in our study, we first rely on the asymptotic behaviour  of trajectories of  a
suitable continuous dynamical system.
So, let's first recall some classic results concerning the algorithms associated with the continuous steepest descent.
Given $\lambda >0$, the gradient descent method  
$$x_{k+1}=x_k-\lambda \nabla f (x_k)
$$ 
and the proximal point method 
$$x_{k+1}=\text{prox}_{\lambda f}(x_k)=\mbox{\rm{argmin}}_{ \xi \in \mathcal H}\left( \lambda f (\xi) + \frac{1}{2}\| x -\xi \|^2\right)
$$
are the  basic blocks for algorithms in  convex optimization. By strong convexity of $ \lambda f (\cdot) + \frac{1}{2}\| x -\cdot \|^2$, its minimizer always exists and is unique, so that the  proximal  operator $\text{prox}_{\lambda f}(\cdot)$ is well defined on the hole space $ \mathcal  H$.

By interpreting  $\lambda$  as a fixed time step, these two algorithms can be
respectively  obtained as either the forward (explicit: $\frac{x_{k+1}-x_k}{\lambda}+\n f(x_{k})=0$) or  the  backward (implicit: $\frac{x_{k+1}-x_k}{\lambda}+\n f(x_{k+1})=0$) discretization  of the  continuous system
 \begin{equation}\label{FODE}
 \dot{x}(t) + \nabla f (x(t))= 0.
\end{equation}

This gradient  method goes back to Cauchy (1847),  while
the proximal algorithm was first introduced
by Martinet \cite{martinet} (1970), and then developed   by Rockafellar \cite{Rock} who extended it to solve monotone inclusions. 
One can consult \cite{BaCo},   \cite{PaBo}, \cite{Pey},  \cite{Pey_Sor},
 \cite{Polyak2}, for a recent account on the proximal methods, that play a central role in nonsmooth optimization as a basic block of many splitting algorithms.

\if{
Historically,  
Polyak \cite{Polyak1} considered the Heavy ball system 
\begin{equation}
\ddot{x}(t)+\a\dot{x}(t) +\n f(x(t))=0,
\end{equation}
which physically describes the motion of a material point (ball) rolling over the graph of the function $f$ and subject to a friction proportional to the velocity.
He proved, see \cite{Polyak2}, that in the case where $f$ is $\m$-strongly convex, the generated trajectory converges linearly to the unique minimizer of $f$ with the rate $\mathcal{O}\left( e^{-2\sqrt{\m}t} \right)$ as $t \to +\infty$.

We will rely on the recent developments linking  Nesterov accelerated method for convex optimization with  inertial gradient dynamics.
As a main originality of our approach, we will show that time rescaling of these dynamics  leads to proximal algorithms that converge arbitrarily fast. 
 
Precisely, ${\rm (IPA)}_{\alpha_k, \lambda_k}$  bears close connection with the {\it Inertial Gradient System}
\begin{equation} \label{eq:basic-inertial-flow}
{\rm (IGS)_{\gamma} } \quad \ddot{x}(t)+\gamma(t) \dot{x}(t)+  \nablaf(x(t))=0,
\end{equation}
 which is a non-autonomous second-order differential equation 
where $\gamma(\cdot)$ is a positive viscous damping parameter.
As pointed out by Su-Boyd-Cand\`es in \cite{SBC}, the ${\rm (IGS)_{\gamma} }$ system with $\gamma (t) = \frac{3}{t}$
  can be seen as a continuous version of the accelerated gradient
method of Nesterov (see \cite{Nest1,Nest2}). This method has been developed  to deal with large scale structured convex minimization problems, such as the FISTA algorithm of Beck-Teboulle \cite{BT}.
 These methods guarantee (in the worst case)
the convergence rate $f(x_k)-\min_\cH f= \mathcal O \left(\frac{1}{k^2}\right)$, where $k$ is the number of iterations.
Convergence of the  sequences generated
by FISTA, has not been established so far (except in the one dimensional case, see \cite{ACR-subcrit}). 
}\fi

Recently, research axes have focused on coupling first-order time-gradient systems with a Tikhonov approximation whose coefficient tends asymptotically to zero.
Let us recall that by solving a general  ill posed problem $b= Bx$ in the sense of Hadamard, Tikhonov proposed the new method  which he called a method of "regularization", see \cite{Tikhonov1,Tikhonov2}. This method, which has been developed  in \cite{Morozov,TikhonovLY} and references therein, consists  first in solving the well-posed problem $b=Bx_\epsilon +\epsilon x_\epsilon$, and then in converging $x_\epsilon$ towards a selected point $\bar x$ (as $\epsilon\rightarrow 0$) which verifies $B\bar x=b$.

\if{
 To provide a fast dynamic approach to the hierarchical minimization problem which consists in finding the minimum norm solution of a convex minimization problem,    Attouch-Czarnecki in \cite{attcz17} (see also \cite{attcz17}, \cite{Cabot-inertiel}, \cite{CEG},  \cite{JM-Tikh})  considered the  system
\begin{equation}\label{HBF-Tikh}
 \ddot{x}(t) + \alpha \dot{x}(t) + \nabla f(x(t)) + \varepsilon (t) x(t) =0.
\end{equation}
When the time goes to infinity and the Tikhonov regularization parameter is supposed to tend slowly towards zero, i.e., $\int_0^{+\infty} \varepsilon (t) dt = + \infty$,
 they proved that  any solution $x(\cdot)$ of \eqref{HBF-Tikh} strongly converge towards the  minimum norm element $x^*$ of $\argmin f$. 
\\
In the quest for faster convergence of the  Su, Boyd and
Cand\`es  dynamical system  in \cite{SBC}, the following  Tikhonov regularization  system with asymptotically vanishing damping
\begin{equation}\label{edo001-0}
 \ddot{x}(t) + \frac{\alpha}{t} \dot{x}(t) + \nabla f (x(t)) +\varepsilon(t) x(t)=0,
\end{equation}
was studied by Attouch, Chbani, and Riahi in \cite{ACR}. Indeed, the case $\epsilon (\cdot)\equiv 0$ and $\alpha =3$ of Su, Boyd and
Cand\`es provides
 a continuous version of the  Nesterov accelerated gradient method, see  \cite{Nest1,Nest2,Nest3,Nest4}.
\\
According to the structure of the heavy ball method for strongly convex functions, the viscous damping coefficient is proportional to the square root of the Tikhonov regularization parameter.
To solve this question, Attouch and L\' aszl\' o in \cite{al} 
 considered  the following system
\begin{equation}\label{system JDE}
\ddot{x}(t)+  \alpha\sqrt{ \varepsilon (t) } \dot{x}(t)+\nabla f(x(t)) +  \varepsilon (t)  x(t) =0.
\end{equation}
In \cite{ABCR-JDE}, Attouch et al.   succeeded in obtaining $\lim_{t\to\infty}\|x(t)-x^*\|= 0$ in the case where $\alpha >0$ and  $ \varepsilon (t)=  \frac{1}{t^{2r}}$ with  $0<r<1$.
This approach is developed by  relying on the proximal operator 
$$ x_{\epsilon (t)} = \hbox{prox}_{\frac1{\epsilon (t)}f}(0):=\underset{\mathcal{H}} { \argmin } { \lbrace f(x)+\frac{\epsilon (t)}{2}} \norm{x}^2 \rbrace .
$$
They obtained, when $t \rightarrow +\infty $, the following convergence rates 
\begin{equation}\label{abcrJDE}
f(x(t))-\underset{\mathcal{H}}{\min}f = \mathcal{O} \left( \dfrac{1}{t^{2r}} \right),\;\;
\norm{ x(t)- x_{\epsilon (t)} }^2   = \mathcal{O} \left( \dfrac{1}{t^{1-r}} \right) \;\text{ and }\;\norm{ \dot{x}(t) }^2   = \mathcal{O} \left( \dfrac{1}{t^{r+1}} \right);
\end{equation}
and mainly, the strong convergence of $x(t)$ to the minimal norm solution  of  \ref{minf}
\\
Even more interestingly, by applying a rescaling and averaging method introduced by Attouch, B\c ot and Nguyen \cite{ABotNguyen}, reference \cite{ACR3} improves the presentation of the  evolution system \eqref{system JDE} by returning to the implicit Hessian form 
 \begin{equation}\label{1st-damped-id05}
\ddot x(t) + \left( \frac{\alpha}{t}  +\dfrac{2^r (\alpha - 1)^{r-1}}{t^{2r-1}}  \right)  \mathring x(t) + \nabla f\left(x(t)+\dfrac{t}{\alpha - 1}\mathring x(t)\right) + 
\dfrac{2^r(\alpha - 1)^r}{t^{2r}} x(t)=0,
\end{equation}
where $0<r\leq 1, \alpha >1$ and $\delta>1$
and then \cite[Theorems 6, 7]{ACR3} complete and improve the previous results.  These recent results provide, also for $r=1$, similar rapid convergence of values, strong convergence to the minimum norm minimizer  and again fast convergence of the gradients to zero.
\\
In the context of non-autonomous dissipative dynamic systems, reparameterization in time is a simple and universal
means to accelerate the convergence of trajectories. This is where the coefficient
$\beta(t)$ comes in as a factor of $\nabla f(x(t))$:
\begin{equation}\label{equation2}
\ddot{x}(t) +  \frac{\alpha}{t} \, \dot{x}(t) + \beta (t) \nabla f(x(t))    =0,
\end{equation}
 In  \cite{ACR1} and \cite{ACR2},
the authors  proved that under appropriate conditions on $\alpha$ and $\beta(t)$, 
$f(x(t)) -\min f =  \mathcal{O} \left( \frac1{t^2\beta(t)} \right)$; hence an improvement of the convergence rate  for the values
is reached by taking $\beta (t)\rightarrow +\infty$ as $t\rightarrow +\infty$. 
\\
In a later paper  \cite{BCR1}, we considered a similar system as \eqref{equation2} without Nesterov's acceleration parameter $\frac1t$ and with the Tikhonov regularizing term $\frac{c}{\beta (t)}\|\cdot\|^2$ to the convex function $f$:
\begin{equation}\label{equation}
 \ddot{x}(t) +  \alpha \, \dot{x}(t)+  + \beta (t) \nabla \varphi _t (x(t))      =0,
\end{equation}
where $ \varphi _t(x):=f(x)+ \dfrac{c}{2\beta (t)}\norm{x} ^2 $ and $\alpha , c, \d$ are three positive real numbers.\\
Under suitable assumptions on $\b(t)$ we have obtained, when $t \to +\infty$, convergence of the values to the global minimum of $f$, strong convergence of the trajectories to the minimum norm solution, and strong convergence of the velocity to zero, with the following rates:
\begin{equation*}
f(x(t))-\underset{\mathcal{H}}{\min}f = \mathcal{O} \left( \dfrac{1}{ \beta (t)} \right), \;\;\;\;
\norm{ x(t)-  x_{t} }^2   = \mathcal{O} \left(    \dfrac{\dot{\beta}(t)}{\beta (t)}+e^{-\mu t} \right)
\end{equation*}
and 
\begin{equation*}
 \norm{\dot{x}(t)}^2 =   \mathcal{O}\left( \dfrac{\dot{\beta}(t)}{\beta (t)} + e^{-\mu t}  \right),
\end{equation*}
where $ x_{t} =\hbox{prox}_{\frac{\beta (t)}{c}f}(x):=\underset{\mathcal{H}} { \argmin } { \lbrace f(x)+\frac{c}{2\b (t)}} \norm{x}^2 \rbrace$, and $\m <\frac{\a}{2}$.\\
}\fi

The minimization of the function $ \varphi _t(x):=f(x)+ \dfrac{c}{2\beta (t)}\norm{x} ^2 $, where  $c$ is a positive real number and $\beta (t) $ goes to $+\infty$ as $t\rightarrow +\infty$, can be seen as a penalization of the problem of minimizing  the objective function $\frac12\|\cdot\|^2$ under the constraint $x\in \text{argmin} f$. This is also a two level hierarchical minimization problem, see \cite{cabo05,attcz10,attcz17,ccr00}. 
\\
Knowing that $ \frac{c}{2}\norm{\cdot} ^2+\beta (t)f(\cdot)$ cross towards $\frac{c}{2}\norm{\cdot} ^2+\iota_{\text{argmin} f}(\cdot)$ as $t\rightarrow+\infty$ if $\iota_{C}$ is the indicator function of the set $C$, i.e., $\iota_{C} (x) = 0$ for $x\in C$, and $+\infty$ outwards. This monotone convergence is proved as a variational convergence  and then as $t\rightarrow+\infty$ (see \cite[Theorems 3.20, 3.66]{att83}) the corresponding weak$\times$strong or strong $\times$weak graph convergence of the associated subdifferential operators: $\nabla (\frac{c}{2}\norm{\cdot} ^2+\beta (t)f){\displaystyle\longrightarrow^G} \partial(\frac{c}{2}\norm{\cdot} ^2+\iota_{\text{argmin} f})$, the convex subdifferential associated to the  proper convex lower semicontinuous function $\frac{c}{2}\norm{\cdot} ^2+\iota_{\text{argmin} f}(\cdot)$.  As $t\rightarrow +\infty$, suppose that the unique minimizer $z_t$ of $\frac{c}{2}\norm{\cdot} ^2+\beta (t)f$ weakly converges to some $\bar z$, then $(z_t,0)$ weak$\times$strong converges to $(\bar z,0)$ in the graph of $\partial(\frac{c}{2}\norm{\cdot} ^2+\iota_{\text{argmin} f}) $; this can be explained as  
$$
0\in  \partial\left(\dfrac{c}{2}\norm{\cdot} ^2+\iota_{\text{argmin} f}\right)(\bar z) = c\bar z +  \partial(\iota_{\text{argmin} f})(\bar z).
$$
 The final equality is due to continuity of the convex function $\frac{c}{2}\norm{\cdot} ^2$ at some point in the nonempty set ${\text{argmin} f}$, which is the effective domaine of the  convex lower semicontinuous function $\iota_{\text{argmin} f}$.

As asymptotical behaviour, for $t\rightarrow+\infty$, this can be explained as the steepest descent dynamical system 
\begin{equation}\label{systeme continue}
0\in \dot{x}(t)  +\partial\left(\dfrac{c}{2}\norm{\cdot} ^2+\beta (t)f\right)(x(t))=  \dot{x}(t)  +cx(t) +\beta (t)\partial f(x(t)).
\end{equation}
 An abundant literature has been devoted to the asymptotic hierarchical minimization property which results from the introduction of a vanishing viscosity term (in our context the Tikhonov approximation) in gradient-like dynamics. For first-order gradient systems and subdifferential inclusions, see \cite{Att2,AttCom}. In \cite{AttCom},  Attouch and Cominetti coupled the dynamic steepest descent method and a Tikhonov regularization term
$$
\dot x(t)+\partial f(x(t))+\epsilon (t)x(t)\ni 0.
$$
   The striking point of their analysis is the strong convergence of the trajectory $x(t)$ when the regularization parameter $\epsilon (t)$ tends to zero with a sufficiently slow rate of convergence $\epsilon \not\in L^1(\mathbb R_+, \mathbb R)$. Then the strong limit is the minimum norm element of  $\text{agrmin }f$. However, if $\epsilon (t) = 0$ we can only expect a weak convergence of the induced trajectory $x(t)$.
 Attouch and Czarnecki in \cite[Theorem 3.1]{attcz10} studied the asymptotic behaviour, as time variable $t$ goes to $+\infty$, of a general nonautonomous first order dynamical system 
$$
0\in \dot{x}(t)  +\partial \varphi (x(t)) + \beta(t)\partial f(x(t)),
$$
and proved weak convergence of $x(t)$ to some $\bar x$ in $\text{argmin}\varphi $ on ${\text{argmin} f}$ that satisfies $0\in \partial(\varphi +\iota_{\text{argmin} f})(\bar x)$. This can be translated for $\varphi = \frac12\|\cdot\|^2$ 
 to $\bar x$ is the minimal norm solution  of  \ref{minf}. This can be considered as a combination of two techniques: the time scaling of a damped inertial gradient system (see \cite{ACR1, ACR2, ACRA}), and the Tikhonov regularization of such systems (see \cite{BCR1, ABCR-JDE} and related references).\\

Our approach in this paper is first to derive,  via Lyapunov analysis, convergence results ensuring  fast convergence of values,  fast convergence of gradients towards zero and strong convergence towards the minimum norm element of $S = \text{argmin}_{\cH} f$, of solution of  the Cauchy type dynamical system 
\begin{equation}\label{systeme continue}
\dot{x}(t)+\b(t) \n f(x(t))+c x(t) =0.
\end{equation}
   This makes it possible to emerge simpler and less technical proofs and thus to better schematize the proof of the results associated with the algorithms generated by the proposed discretizations.

\if{
The Proximal Point Algorithm (PPA) was introduced by Martinet \cite{Martinet} for resolving the monotone inclusion $0\in Ax$. The algorithm  (PPA)  is given by $x_{k+1}=\text{prox}_{\l_kA}( x_k)=(Id+\l_k A)^{-1}(x_k)$.
Rockafeller's \cite{Rockafellar} showed that the (PPA) converges weakly to some point in $A^{-1}(0) $ whenever $A^{-1}(0) \neq \emptyset$ and $(\l_k)$ is bounded away from zero. After that he posed the open question of whether  (PPA) converges strongly or not.  Güler \cite{Guler} answers that question by constructing an example in which the sequence generated by (PPA) converges weakly but not strongly. 
Lehdili and Moudafi \cite{Lahdili} combined the  proximal algorithm and Tikhonov regularization to propose  $x_{k+1}=\text{prox}_{\l_k(A+t_kId)}( x_k)$. This algorithm converges strongly to
a point in $A^{-1}(0) $.
As a result, much several authors proposed modifications of (PPA) to improve weak convergence to strong convergence, see for examples \cite{Kam-Ta, Solodov , Sow, Wang}.
}\fi

So, to attain a solution of the problem $(\mathcal{P})$ for nonsmooth convex function $f$, we consider the following implicit  discretization of the set-valued system 
$
0\in \dot{x}(t)+\b(t) \partial f(x(t))+c x(t) ,
$
 with $\beta(k)=\frac{\b_k}{d}$ and $c=\frac{1-d}d\in ]0,1[$:
\begin{equation}\label{disc syst }
x_{k+1}-x_k+\dfrac{\b_k}{d} \p f(x_{k+1})+\dfrac{1-d}{d}x_{k+1}\ni 0.
\end{equation}
Recall that the proximal operator can be formulated as follows $\text{prox}_{\b f}( x):=\left( I+\b \p f \right)^{-1}(x)$, then iteration \eqref{disc syst } can be reformulated as 
the proximal algorithm:
\begin{equation}\label{proximal algorithm}
x_{k+1}=\text{prox}_{\b_k f}(d x_k).
\end{equation}
Thus, we adhere to  establish similar proposals as fast convergence of values and  gradients towards zero and strong convergence   of the proximal algorithm \eqref{proximal algorithm} to $x^*$ under  suitable  conditions on the sequence $(\b_k).$
\if{
Recall that $\text{prox}_{\b f}( x):=\left( I+\b \p f \right)^{-1}(x)$, then algorithm \eqref{proximal algorithm} can be reformulated as 
\begin{equation}\label{disc syst }
x_{k+1}-x_k+\dfrac{\b_k}{d} \p f(x_{k+1})+\dfrac{1-d}{d}x_{k+1}\ni 0,
\end{equation}
Thus algorithm \eqref{proximal algorithm}, as mentioned above,  may be interpreted as an implicit  discretization of the system \eqref{systeme continue} with $\beta(k)=\dfrac{\b_k}{d}$ and $c=\dfrac{1-d}d>0$.
}\fi \\

The organization of the rest of the paper is as follows. In second section  we recall basic facts concerning Tikhonov approximation.  We also show  the main result of this work concerning the strong convergence property of algorithm \eqref{proximal algorithm}. In section 3, we apply these results to the two particular cases of the sequence $(\b_k)$. Finally, the last section is devoted to the extension of these results to nonsmooth convex functions.

\section{Convergence rates for continuous case}
Return to the differential equation \eqref{systeme continue} :
\begin{equation*}
\dot{x}(t)+\b(t) \n f(x(t))+c x(t) =0.
\end{equation*}
We suppose the following conditions:
\begin{equation*}\label{H01}
\tag*{$(\mathbf{H}_f)$}\left \{
\begin{array}{lll}
(i)&& f \textit{ is convex and  differentiable on } \cH,\\\\
(ii)&& S := \argmin \, f \neq \emptyset\, , \\\\
(iii)&& \nabla f  \textit{  Lipschitz continuous on bounded subsets of }\mathcal{H},
\end{array}
\right.
\end{equation*}
and, there exists $\mu >0$ such that, for $t> t_0$
\begin{equation*}\label{H0}
\tag*{$(\mathbf{H}_\beta )$} \;
\left \{
\begin{array}{lll}
(i)&& \beta (t) \text{ is a positive,}\; \mathcal{C}^2\, \text{ function with }  \dot{\beta}(t)\neq 0,\\\\
(ii)&&
\max\left(\dfrac{\dot{\b}(t)}{\b(t)}\;,\;1\right)  \leq c-\m  ,\\
(iii)&& 
{	\underset{t\rightarrow +\infty}{\limsup} \,\dfrac{  1 + \dfrac{\dot{\beta } (t)}{\beta  (t)}}{\mu + \dfrac{\ddot{\beta } (t)}{{\dot\beta} (t)} - \dfrac{\dot{\b}(t)}{\b(t)}} <+\infty  .}
\end{array}
\right.\hspace{3cm}
\end{equation*}
Let us denote by $x^*$  the minimum norm element  of  $S$, and introduce the energy function $E(t)$  that  is defined on $[t_0, +\infty[$ by
\begin{equation}\label{3}
E(t) := 
\b(t)\left(\varphi_{t}(x(t))-\varphi_{t}(y(t))\right) +\dfrac{c}{2}\|x(t)-y(t)\|^{2}
\end{equation}
where $x(t)$ is a solution of \eqref{systeme continue}, $y(t) = \argmin_{\cH}{\varphi}_{t}$    and
$
\varphi_t (x) := f(x) + \dfrac{c}{2\b(t)} \|x\|^2.
$\\
We remark that \eqref{systeme continue} becomes 
\begin{equation}\label{systeme continue1}
\dot{x}(t)+\b(t) \n \varphi_{t}(x(t)) =0.
\end{equation}
and, under the initial condition $x(t_0)=x_0\in \cH$, it admits a unique solution. 

\begin{theorem}\label{Th1}
Let $f: \mathcal{H} \rightarrow \mathbb{R}$ be a convex function satisfying condition \ref{H01} and $x(.): [t_0 , +\infty[ \rightarrow \mathcal{H}  $ be a solution of the system \eqref{systeme continue}.\\
If $\beta(t)$ satisfies \ref{H0}, then $x(t)$ strongly converges to $x^*$, and there exists $t_1 \geq t_0$ such that, for  $t \geq t_1$  
\if{
\begin{equation*}\label{estim E}
		E(t) \le  \frac{ E(t_1) \gamma(t_1)}{\gamma(t)} + \frac{(1+M)\| x^* \|^2 }{2\gamma(t)} \int_{t_1}^{t} 
		 \dfrac{c(s)\dot{\beta } (s)}{\beta  (s)} \gamma(s)ds
\end{equation*}
and consequently
}\fi
\begin{eqnarray}
\label{eq:8-1}&&f(x(t)) - \min_{\mathcal H}f = \mathcal{O}\left( \dfrac{1}{\b (t)}\right);\\
\label{eq:8-2}&& \|x(t) - y(t)\|^2 			= \mathcal{O}\left( \frac{ 1}{e^{\mu t}} + \dfrac{\dot{\b}(t)}{\b(t)}\right);\\
\label{eq:8-3}&&\norm{\nabla f(x(t))}^2 	= \mathcal{O}\left( \dfrac{1}{\b^(t)}\right).
\end{eqnarray}
\end{theorem}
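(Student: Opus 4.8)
The plan is to run a Lyapunov analysis on the energy \eqref{3}, exploiting that by \eqref{systeme continue1} the trajectory is the steepest descent flow of the $\frac{c}{\beta(t)}$-strongly convex function $\varphi_t$, whose unique minimizer $y(t)$ is the Tikhonov-regularized point. First I would record the only facts about $y(t)$ that the argument needs. The optimality condition $\nabla\varphi_t(y(t))=0$ reads $\beta(t)\nabla f(y(t))+c\,y(t)=0$; comparing the value of $\varphi_t$ at $y(t)$ and at $x^*$ gives $\norm{y(t)}\le\norm{x^*}$ and $f(y(t))-\min_{\cH}f=\mathcal O(1/\beta(t))$, and standard Tikhonov theory gives $y(t)\to x^*$ strongly as $\beta(t)\to+\infty$. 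Differentiating $\beta\nabla f(y)+cy=0$ and using $\nabla^2 f\ge 0$ yields the velocity estimate $\norm{\dot y(t)}\le \frac{|\dot\beta(t)|}{\beta(t)}\norm{y(t)}\le\frac{|\dot\beta(t)|}{\beta(t)}\norm{x^*}$.

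Next I would differentiate $E$. Writing $\beta\varphi_t(\cdot)=\beta f(\cdot)+\frac c2\norm{\cdot}^2$ and substituting $\dot x=-(\beta\nabla f(x)+cx)$ together with $\beta\nabla f(y)+cy=0$, the $\dot x$- and $\dot y$-groups collapse to
\[
\dot E=\dot\beta\,(f(x)-f(y))-\norm{\dot x}^2+c\langle x-y,\dot x\rangle-c\langle x-y,\dot y\rangle .
\]
The dissipation comes from strong convexity: from $\langle\nabla\varphi_t(x),x-y\rangle\ge \varphi_t(x)-\varphi_t(y)+\frac{c}{2\beta}\norm{x-y}^2$ and $\dot x=-\beta\nabla\varphi_t(x)$ one gets $c\langle x-y,\dot x\rangle\le -cE$. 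Expressing $\dot\beta(f(x)-f(y))$ through $E$ produces the term $\frac{\dot\beta}{\beta}E$, so that $-cE+\frac{\dot\beta}{\beta}E\le-\mu E$ by the bound $\dot\beta/\beta\le c-\mu$ in \ref{H0}. The leftover contributions — the cross term $-c\langle x-y,\dot y\rangle$ and the part of $\dot\beta(f(x)-f(y))$ carrying $\norm{x}^2-\norm{y}^2$ — are each dominated by $\frac{|\dot\beta|}{\beta}\norm{x^*}\norm{x-y}$ using the velocity bound and $\norm{y}\le\norm{x^*}$, while $-\norm{\dot x}^2\le 0$ is discarded. Inserting $\norm{x-y}\le\sqrt{E/c}$ (from $E\ge c\norm{x-y}^2$) puts the inequality in Bernoulli form $\dot E\le-\mu E+K\frac{\dot\beta}{\beta}\sqrt E$, and the substitution $u=\sqrt E$ linearizes it to $\dot u+\frac\mu2 u\le\frac K2\frac{\dot\beta}{\beta}$.

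Integrating with the factor $e^{\mu t/2}$ gives an exponential term $\mathcal O(e^{-\mu t})$ for $E$ plus an integral remainder, so the whole estimate hinges on showing
\[
e^{-\mu t/2}\int_{t_1}^t e^{\mu s/2}\,\frac{\dot\beta(s)}{\beta(s)}\,ds=\mathcal O\!\left(\frac{\dot\beta(t)}{\beta(t)}\right).
\]
This is the main obstacle, and it is precisely what the third condition of \ref{H0} is designed for: setting $g=\dot\beta/\beta$ one has $\ddot\beta/\dot\beta-\dot\beta/\beta=\dot g/g$, and applying L'Hôpital's rule to the ratio of the integral to $e^{\mu t/2}g(t)$ yields the limiting value $1/(\mu+\dot g/g)$, whose boundedness follows from the finiteness of $\limsup\frac{1+g}{\mu+\dot g/g}$. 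This gives $E(t)=\mathcal O(e^{-\mu t}+\dot\beta/\beta)$, and since $\norm{x-y}^2\le E/c$ this is exactly \eqref{eq:8-2}.

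Finally I would read off the remaining conclusions. Boundedness of $E$ is immediate from the differential inequality (the forcing term is bounded), and with $\norm{y}\le\norm{x^*}$ it forces the trajectory to stay bounded; writing $f(x)-f(y)=\frac1\beta\big(E-\frac c2\norm{x-y}^2\big)-\frac{c}{2\beta}(\norm{x}^2-\norm{y}^2)$ and adding $f(y)-\min_{\cH}f=\mathcal O(1/\beta)$ then produces \eqref{eq:8-1}. For \eqref{eq:8-3}, since the trajectory lies in a bounded set on which $\nabla f$ is $L$-Lipschitz by \ref{H01}, the descent inequality gives $\norm{\nabla f(x)}^2\le 2L\,(f(x)-\min_{\cH}f)=\mathcal O(1/\beta)$. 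Strong convergence $x(t)\to x^*$ then follows by decomposing $x=y+(x-y)$, since $y(t)\to x^*$ by Tikhonov and $\norm{x(t)-y(t)}\to 0$ by \eqref{eq:8-2}.
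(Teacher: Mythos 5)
Your overall strategy coincides with the paper's: the same energy $E(t)=\beta(t)\bigl(\varphi_t(x)-\varphi_t(y)\bigr)+\frac c2\|x-y\|^2$, the same facts about the Tikhonov curve $y(t)$ (namely $\|y\|\le\|x^*\|$, $\|\dot y\|\le\frac{\dot\beta}{\beta}\|x^*\|$ and $y(t)\to x^*$), the same use of \ref{H0}(ii) to extract a $-\mu E$ term, of \ref{H0}(iii) to tame the integral remainder, and the same final deductions, including Lemma \ref{ext_descent_lemma} for \eqref{eq:8-3}. Your expression for $\dot E$ and the inequality $c\langle x-y,\dot x\rangle\le -cE$ are both correct and rather elegant. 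The flaw is in how you close the differential inequality. Writing $\dot\beta\bigl(f(x)-f(y)\bigr)=\frac{\dot\beta}{\beta}E-\frac{c\dot\beta}{2\beta}\|x-y\|^2-\frac{c\dot\beta}{2\beta}\bigl(\|x\|^2-\|y\|^2\bigr)$, you claim the last term is dominated by $\frac{|\dot\beta|}{\beta}\|x^*\|\,\|x-y\|$. Since $\|x\|^2-\|y\|^2=\langle x-y,x+y\rangle$, that would require $\|x+y\|\lesssim\|x^*\|$, i.e.\ an a priori bound on $\|x(t)\|$, which you do not have at this stage (boundedness of the trajectory is only obtained a posteriori from the energy estimate, so invoking it here is circular). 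The correct treatment, which is what the paper does, is to discard $-\frac{c\dot\beta}{2\beta}\|x\|^2\le 0$ and keep only $\frac{c\dot\beta}{2\beta}\|y\|^2\le\frac{c\dot\beta}{2\beta}\|x^*\|^2$. But that residual is an additive $\mathcal O\bigl(\frac{\dot\beta}{\beta}\bigr)$ forcing term, not an $\mathcal O\bigl(\frac{\dot\beta}{\beta}\sqrt E\bigr)$ one, so your inequality really reads $\dot E\le-\mu E+K\frac{\dot\beta}{\beta}\sqrt E+K'\frac{\dot\beta}{\beta}$, and the substitution $u=\sqrt E$ no longer linearizes it: dividing by $2u$ leaves a term $K'\frac{\dot\beta}{\beta}/(2u)$ that is uncontrolled where $E$ is small. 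The Bernoulli detour therefore breaks down.

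The repair is to abandon it: either absorb $K\frac{\dot\beta}{\beta}\sqrt E\le\frac{\mu}{2}E+\frac{K^2}{2\mu}\bigl(\frac{\dot\beta}{\beta}\bigr)^2$ by Young's inequality, or, as the paper does, absorb all $\|x-y\|^2$ contributions into the $-\frac{c^2}{2}\|x-y\|^2$ produced by strong convexity using the sign condition $\frac{(1+\mu-c)c}{2}\le 0$ coming from \ref{H0}(ii). Either way one lands on a \emph{linear} inequality of the form $\frac{d}{dt}\bigl(e^{\mu t}E\bigr)\le\frac{c\dot\beta}{2\beta}\bigl(1+\frac{\dot\beta}{\beta}\bigr)e^{\mu t}\|x^*\|^2$. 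This also cures a second mismatch in your write-up: your L'H\^opital step is run against the weight $e^{\mu t/2}$, whose relevant denominator is $\frac{\mu}{2}+\frac{\ddot\beta}{\dot\beta}-\frac{\dot\beta}{\beta}$, whereas \ref{H0}(iii) as stated controls $\mu+\frac{\ddot\beta}{\dot\beta}-\frac{\dot\beta}{\beta}$, i.e.\ the weight $e^{\mu t}$. With the linear inequality, \ref{H0}(iii) gives directly $\frac{d}{dt}\bigl(e^{\mu t}E\bigr)\le M\frac{d}{dt}\bigl(\frac{\dot\beta}{\beta}e^{\mu t}\bigr)$, integration yields $E(t)=\mathcal O\bigl(e^{-\mu t}+\frac{\dot\beta}{\beta}\bigr)$, and your derivations of \eqref{eq:8-1}, \eqref{eq:8-2} and \eqref{eq:8-3} then go through unchanged.
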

%

\begin{proof}
We have defined $E(t)$ in \eqref{3} by
$E(t)=A(t) + B(t)$ where 
$$
A(t)=\beta (t) \left( \varphi _t (x(t))- \varphi _t (y(t)) \right)   \text{and } B(t)=\dfrac{c}{2} \norm {x(t)-y(t)}^2.
$$
So, we need to fit a first  order differential inequality on $E(t)$ in order to be able to get the desired upper bound 
\begin{equation}\label{estim E}
		E(t) \le  \frac{ E(t_1) \gamma(t_1)}{\gamma(t)} + \frac{(1+M)\| x^* \|^2 }{2\gamma(t)} \int_{t_1}^{t} 
		 \dfrac{c(s)\dot{\beta } (s)}{\beta  (s)} \gamma(s)ds.
\end{equation}
 To calculate the derivative of the functions  $A(t)$ and $B(t)$, we remark that the mapping $t\mapsto y (t)$ is absolutely continuous (indeed locally Lipschitz), see \cite[section VIII.2]{Brezis2}. We conclude that $y(t)$ is  almost everywhere  differentiable on $[t_0,+\infty[$, and then using differentiability of $f$, we also deduce  almost everywhere  differentiability of $A(t)$ and $B(t)$ on the same intervalle.
Thus, we have
$$
\dot{A}(t)=\dot{\beta} (t)\left( \varphi _t (x(t))- \varphi _t (y(t)) \right) + \beta (t) \left(  \dfrac{d}{dt} \left(\varphi _t (x(t)) \right) - \dfrac{d}{dt} \left(\varphi _t (y(t)) \right) \right) .
$$
Using the derivative calculation and \eqref{systeme continue1}, we remark that
\begin{equation*}
\begin{array}{lll}
\dfrac{d}{dt} \left(\varphi _t (x(t)) \right) &=& \dfrac{d}{dt}\left( f(x(t))+\dfrac{c}{2\beta (t)}\norm{x(t)}^2 \right) \\\\
&=& \large\langle \dot{x}(t)\, , \, \nabla f(x(t))  \large\rangle - \dfrac{c\dot{\beta } (t)}{2\beta ^2 (t)} \norm{x(t)}^2 +\dfrac{c}{\beta (t)} \large\langle \dot{x}(t) \, , \, x(t) \large\rangle \\\\
&=& \large\langle \dot{x}(t)\, , \, \nabla f(x(t))+ \dfrac{c}{\beta (t)} x(t)  \large\rangle - \dfrac{c\dot{\beta } (t)}{2\beta ^2 (t)} \norm{x(t)}^2  \\\\
&=& \large\langle \dot{x}(t)\, , \, \nabla \varphi _t (x(t))  \large\rangle- \dfrac{c\dot{\beta } (t)}{2\beta ^2 (t)} \norm{x(t)}^2 \\
&=& -\beta(t)\left\|  \nabla \varphi _t (x(t))  \right\|^2- \dfrac{c\dot{\beta } (t)}{2\beta ^2 (t)} \norm{x(t)}^2  
\end{array}
\end{equation*}
On the other hand, according to \cite[Lemma 2(i)]{ABCR-JDE}, we have
 $$ 
 \dfrac{d}{dt} \left(\varphi _t (y (t)) \right) =  \dfrac{-c\dot{\beta}(t)}{2\beta ^2 (t)}\norm{y(t)}^2 .
 $$
Thus
\begin{equation}\label{16}
\dot{A}(t)= \dot{\beta} (t)\left( \varphi _t (x(t))- \varphi _t (y(t)) \right)   -
\beta^2 (t) \left[  \left\|  \nabla \varphi _t (x(t))  \right\|^2 + \dfrac{c\dot{\beta } (t)}{2\beta ^3 (t)}\left(  \norm{x(t)}^2- \norm{y(t)}^2 \right) \right] .
\end{equation}
\if{
Since $  \dfrac{-c\dot{\beta} (t)}{2\beta ^2 (t)} \norm{x(t)}^2\leq 0$, we conclude
\begin{equation}\label{5}
\dot{A}(t)\leq \dot{\beta} (t)\left( \varphi _t (x(t))- \varphi _t (y(t)) \right) +  \beta (t)  \large\langle \dot{x}(t)\, , \, \nabla \varphi _t (x(t))  \large\rangle  + \dfrac{c\dot{\beta}(t)}{2\beta  (t)}  \norm{y(t)}^2 .
\end{equation}
}\fi
We also have
\begin{equation*}
\begin{array}{lll}
\dot{B} (t) &=&   c\large\langle x(t) - y(t)  \, , \,  \dot{x}(t)- \dot y(t)  \large\rangle\\
&=&  - c\large\langle x(t) - y(t)  \, , \, \beta (t) \nabla \varphi _t (x(t)) + \dot y(t) \large\rangle .
\end{array}
\end{equation*}
\if{
Thus
\begin{equation}\label{6}
\begin{array}{lll}
\dot{B}(t) &=& (\tau ^2 - \alpha \tau ) \large\langle x(t) - y(t) \, , \, \dot{x}(t) \large\rangle +(\tau - \alpha ) \norm {\dot{x}(t) }^2 \\
\end{array}
\end{equation}
According to Condition $(\mathbf{H}_1)$ and $\underset{a \longrightarrow +\infty}{\lim}\, \dfrac{a-1}{a+1} =  1 $,    one can choose $a>1$ large enough and  $t_1 \geq t_0$ such that for all $t \geq t_1$
\begin{equation}\label{existance de a}
 \dfrac{\dot{\beta} (t)}{ \beta (t)} \leq \dfrac{a-1}{a+1} \alpha  .
\end{equation}
}\fi
Noting that for every $u,v\in \mathcal{H},\, \lambda >0$, $\langle u,v\rangle\leq \frac{\lambda}2\| u\|^2+\frac1{2\lambda}\|v\|^2$,  we get
\begin{equation*} 
- c \large\langle  x(t) - y(t) \, , \, \dot y(t) \large\rangle \leq \dfrac{c}{2}\left( \norm{ x(t) - y(t)}^2 + \norm{ \dot y(t)}^2 \right).
\end{equation*}
Using  \cite[Lemma 2(ii)]{ABCR-JDE}, we  obtain  $ \norm{ \dot y(t)} \leq \dfrac{\dot{\beta}(t)}{\beta(t)} \norm{y (t)}\leq \dfrac{\dot{\beta}(t)}{\beta(t)} \norm{x^*}$; we conclude 
\begin{equation}\label{8}
- c \large\langle  x(t) - y(t) \, , \, \dot y(t) \large\rangle \leq \dfrac{c}{2}\left( \norm{ x(t) - y(t)}^2 +  \dfrac{\dot{\beta}^2(t)}{\beta^2(t)} \norm{x^*}^2 \right) .
\end{equation}
Observe that for $c>0$ the function  $\varphi _t$ is $\frac{c}{\b(t)}$-strongly convex,  then we have
%
\begin{equation*}
 - c \beta (t)\large\langle \nabla \varphi _t (x(t))  \, , \,   x(t) - y(t)  \large\rangle  \leq  -  c \beta (t) \left( \varphi _t (x(t))-  \varphi _t \left(y(t)\right) \right) - \dfrac{c^2}{2} \norm{  x(t) - y(t) }^2 .
\end{equation*}
Thus
\begin{equation}\label{9}
\begin{array}{lll}
\dot{B}(t) &\leq&    \dfrac{c}{2}\left( \norm{ x(t) - y(t)}^2 +  \dfrac{\dot{\beta}^2(t)}{\beta^2(t)} \norm{x^*}^2 \right) \\
	&&-  c \beta (t) \left( \varphi _t (x(t))-  \varphi _t \left(y(t)\right) \right) - \dfrac{c^2}{2} \norm{  x(t) - y(t) }^2 \\
&=&  \dfrac{(1-c)c}{2} \norm {x(t)-y(t)}^2  + \dfrac{c\dot{\beta}^2(t)}{2\beta^2(t)} \norm{x^*}^2  -  c \beta (t) \left( \varphi _t (x(t))-  \varphi _t \left(y(t)\right) \right) .
\end{array}
\end{equation}
%
Due to the inequalities \eqref{16} and \eqref{9}, we get
\begin{equation}\label{10}
\begin{array}{lll}
\dot{E}(t) &\leq &  \dot{\beta} (t)\left( \varphi _t (x(t))- \varphi _t (y(t)) \right)   -
\beta^2 (t) \left[  \left\|  \nabla \varphi _t (x(t))  \right\|^2 + \dfrac{c\dot{\beta } (t)}{2\beta ^3 (t)}\left(  \norm{x(t)}^2- \norm{y(t)}^2 \right) \right] \\
	& & +  \dfrac{(1-c)c}{2} \norm {x(t)-y(t)}^2  + \dfrac{c\dot{\beta}^2(t)}{2\beta^2(t)} \norm{x^*}^2  -  c \beta (t) \left( \varphi _t (x(t))-  \varphi _t \left(y(t)\right) \right)  \\
	& = & \left(  \dot{\beta} (t)-c\beta (t)\right) \left( \varphi _t (x(t))- \varphi _t (y(t)) \right)   -
\beta^2 (t)  \left\|  \nabla \varphi _t (x(t))  \right\|^2 \\\\
&\,\,&- \dfrac{c\dot{\beta } (t)}{2\beta  (t)}\left(  \norm{x(t)}^2- \norm{y(t)}^2 \right)  +  \dfrac{(1-c)c}{2} \norm {x(t)-y(t)}^2  + \dfrac{c\dot{\beta}^2(t)}{2\beta^2(t)} \norm{x^*}^2 \\
\end{array}
\end{equation}
and thus,  for all $t \geq t_0$ we have
$$
\begin{array}{lll}
\dfrac{d}{dt} \left[e^{\mu t} E(t) \right] &=& \left( \dot{E}(t)+ \mu E(t) \right)e^{\mu t} \\
	&\leq&  \left[  \left(  \dot{\beta} (t)-(c-\m)\beta (t)\right) \left( \varphi _t (x(t))- \varphi _t (y(t)) \right)   -
\beta^2 (t)  \left\|  \nabla \varphi _t (x(t))  \right\|^2 \right.\\
&\,\,&
- \dfrac{c\dot{\beta } (t)}{2\beta  (t)}\left(  \norm{x(t)}^2- \norm{y(t)}^2 \right)   +  \dfrac{(1+\mu-c)c}{2} \norm {x(t)-y(t)}^2 \\
&& \left.  + \dfrac{c\dot{\beta}^2(t)}{2\beta^2(t)} \norm{x^*}^2   \right] e^{\mu t} .
\end{array}
$$
Using assumption \ref{H0} (ii), we conclude that  for $t\geq t_0$ large enough
\begin{equation}\label{11}
\dfrac{d}{dt} \left[e^{\mu t} E(t) \right] \leq   \dfrac{c\dot{\beta } (t)}{2\beta  (t)}\left(   \norm{y(t)}^2 + \dfrac{\dot{\beta } (t)}{\beta  (t)} \norm{x^*}^2\right)  e^{\mu t} .
\end{equation}
Since $\; \norm{y(t)} \leq \norm{x^*}\;  $  (see \cite[Lemma 2(ii)]{ABCR-JDE} ), we conclude
\begin{equation}\label{13}
\dfrac{d}{dt} \left[e^{\mu t} E(t) \right] \leq   \dfrac{c\dot{\beta } (t)}{2\beta  (t)}\left(  1 + \dfrac{\dot{\beta } (t)}{\beta  (t)}\right)  e^{\mu t} \norm{x^*}^2.
\end{equation}
	
	Now return to condition \ref{H0}(iii), which means
	$$
	\underset{t\rightarrow +\infty}{\limsup} \,\dfrac{\dfrac{\dot{\beta } (t)}{\beta  (t)} \left(  1 + \dfrac{\dot{\beta } (t)}{\beta  (t)}\right)e^{\mu t } }{ \dfrac{d}{dt}\left(\dfrac{\dot{\b}(t)}{\b(t)}e^{\mu t }\right)} <+\infty,
	$$
we obtain existence of $t_1\geq t_0$ and $M>0$ such that for each $t\geq t_1$
	\begin{equation} \label{e14}
	\dfrac{d}{dt} \left[e^{\mu t} E(t) \right] \leq \frac{cM \norm{x^*}^2 }{2} \dfrac{d}{dt}\left(\dfrac{\dot{\b}(t)}{\b(t)}e^{\mu t }\right) .
	\end{equation}
	By integrating on $[t_1,t]$, we get  
	\begin{eqnarray}
		E(t) &\le&  \frac{ E(t_1)e^{\mu t_1}}{e^{\mu t}} + \frac{cM\| x^* \|^2 }{2e^{\mu t}} \int_{t_1}^{t} 
		 \left(\dfrac{\dot{\b}(s)}{\b(s)}e^{\mu s }\right)ds \nonumber\\
		 &\leq&  \frac{ E(t_1)e^{\mu t_1}}{e^{\mu t}} + \frac{c M\norm{x^*}^2 }{2} \dfrac{\dot{\b}(t)}{\b(t)} .   \label{Ee1}
	\end{eqnarray}
 According to the definition of $ \varphi_{t}$,  we have 
 \begin{equation*}
\begin{array}{lll}
f(x(t)) &-& \min_{\mathcal H}f	=  \varphi_{t}(x(t))-\varphi_{t}(x^*)+\dfrac{c}{2\b (t)}\left(\|x^*\|^{2}-\|x(t)\|^{2}\right)  \\ 
	& = & \left[\varphi_{t}(x(t))-\varphi_{t}(y(t))\right]+ [\underbrace{\varphi_{t}(y(t))-\varphi_{t}(x^*)}_{\leq 0} ]+\dfrac{c}{2\b (t)}\left(\|x^*\|^{2}-\|x(t)\|^{2}\right)\\
	& \leq  &\varphi_{t}(x(t))-\varphi_{t}(y(t))+\dfrac{c}{2\b (t)}\|x^*\|^{2}.
\end{array}
\end{equation*}
By definition of $E(t)$,\;  
$$
\varphi_{t}(x(t))-\varphi_{t}(y(t)) \leq \dfrac{E_(t)}{\b (t)},
$$
 which, combined with the above inequality and \eqref{Ee1}, gives 
 \begin{equation}\label{eq:17a}
f(x(t)) - \min_{\mathcal H}f \leq \dfrac{1}{\b (t)}\left(\frac{ E(t_1)e^{\mu t_1}}{e^{\mu t}} + \frac{c \norm{x^*}^2 }{2} \dfrac{\dot{\b}(t)}{\b(t)}+\frac{c}{2}\|x^*\|^{2}\right). 
\end{equation}
This means, according to \ref{H0}(ii), that 
\if{ $for $t$ large enough 
$$f(x(t)) - \min_{\mathcal H}f  =\mathcal{O}\left( \dfrac{1}{\b (t)}\right).$$
}\fi
$f$ satisfies \eqref{eq:8-1}.

Also, 
\if{
by the strong convexity of $\varphi_{t}$, and the realization of the unique minimum of $\varphi_{t}$ at $y(t)$,  we have
 \begin{equation}\label{eq:17}
\varphi_{t}(x(t))-\varphi_{t}(y(t)) \geq \frac{c}{2\b (t)}  \|x(t) - y(t)\|^2 .
\end{equation}
By combining \eqref{eq:17} with $  \varphi_{t}(x(t))-\varphi_{t}(y(t)) \leq \dfrac{E_(t)}{\b (t)}$,
}\fi
using the definition of $E(t)$, we get
$$
E(t) \geq \frac{c}{2}  \|x(t) - y(t)\|^2,
$$
which gives 
 \begin{equation*}
 \|x(t) - y(t)\|^2 \leq \frac{2}{c}  E(t)  \leq  \frac{2}{c}\left(\frac{ E(t_1)e^{\mu t_1}}{e^{\mu t}} + \frac{c \norm{x^*}^2 }{2} \dfrac{\dot{\b}(t)}{\b(t)}\right),
\end{equation*}
and therefore \eqref{eq:8-2} is satisfied.

Return to \ref{H0}(ii), we justify $(x(t))$ is bounded, and then combining \eqref{eq:17a} and Lemma \ref{ext_descent_lemma} we deduce  existence of $L>0$ such that for $t$ large enough
\begin{eqnarray*}
 \| \nabla f (x)\|^2  &\leq& 2L(f(x) - \min_{\cH} f) \leq  \dfrac{2L}{\b (t)}\left(\frac{ E(t_1)e^{\mu t_1}}{e^{\mu t}} + \frac{c \norm{x^*}^2 }{2} \dfrac{\dot{\b}(t)}{\b(t)}+\frac{c}{2}\|x^*\|^{2}\right).
\end{eqnarray*}
We conclude, for $t$ large enough, the estimation \eqref{eq:8-3}. 
\end{proof}

\section{Particular cases on the choice of $\b(t)$}
We illustrate the theoretical conditions on $\beta(t)$ by two interesting examples:
\subsection{Case $\beta (t)=t^m\ln^pt$}
Consider the positive function $\beta (t)=t^m\ln^pt$ for $t\geq t_0>0$ and $(m,p)\in \mathbb R_+^2\setminus\{(0,0)\}$. We have
\begin{eqnarray*}
&&\dot\b(t) = t^{m-1}\ln^{p-1}t \left(m\ln t +p\right),\\
&&\ddot\b(t) = t^{m-2}\ln^{p-2}t \left(m(m-1)\ln^2 t + (2m-1)p\ln t + p(p-1)\right).\\
\end{eqnarray*}
Then, for $t$ large enough, we have the function $\frac{\dot\b(t)}{ \b(t)}$ (respectively $\frac{\ddot\b(t)}{\dot \b(t)}$) is equivalent to $\frac{p}{t\ln t}$ (respectively to $\frac{m-1}{t}$ if $m\neq 1$, and $\frac{1}{t\ln t}$ if $m= 1$).\\
 Thus
	$
	\underset{t\rightarrow +\infty}{\limsup} \,\frac{  1 + \frac{\dot{\beta } (t)}{\beta  (t)}}{\mu + \frac{\ddot{\beta } (t)}{{\dot\beta} (t)} - \frac{\dot{\b}(t)}{\b(t)}}= \dfrac1{\mu} <+\infty,
	$ for each $(m,p)\in \mathbb R_+^2\setminus\{(0,0)\}$.\\
	We conclude that condition \ref{H0} is satisfied whenever $c>0$ and $0<\m\leq c-1$.
	
	\begin{corollary}\label{CorCont1}
	If $f: \mathcal{H} \rightarrow \mathbb{R}$  satisfies  \ref{H01}, $\beta(t)=t^m\ln^pt$ for $t\geq t_0>0, (m,p)\in \mathbb R_+^2\setminus\{(0,0)\}$ and $x(.): [t_0 , +\infty[ \rightarrow \mathcal{H}  $ is a solution of  \eqref{systeme continue}. Then
 $x(t)$ strongly converges to $x^*$, and  for  $t \geq t_0$  large enough
\begin{eqnarray}
\label{eq:20-1}&&f(x(t)) - \min_{\mathcal H}f = \mathcal{O}\left( \dfrac{1}{t^m\ln^pt}\right);\\
\label{eq:20-2}&& \|x(t) - y(t)\|^2  = \mathcal{O}\left( \dfrac{1}{t\ln t}\right);\\
\label{eq:20-3}&&\norm{\nabla f(x(t))}^2 	=\left\{\begin{array}{ll} 
\mathcal{O}\left( \dfrac{1}{t\ln t}\right) & \text{ if either } m>\frac12 \text{ and } p>0,\\
& \text{ or }m=\frac12 \text{ and } p\geq1\\
\mathcal{O}\left( \dfrac{1}{t^{\min(1,2m)}\ln^p t}\right) & \text{ if either } m<\frac12 \text{ and } p>0, \\
& \text{ or }m=\frac12 \text{ and }0< p<1.
\end{array}\right.
\end{eqnarray}
	\end{corollary}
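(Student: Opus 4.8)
The plan is simply to specialize Theorem~\ref{Th1} to $\beta(t)=t^m\ln^pt$, so the only genuine work is to check that this choice satisfies \ref{H0} and then to translate the three generic rates \eqref{eq:8-1}--\eqref{eq:8-3} into the explicit ones. First I would record, from the derivatives displayed just above, that $\frac{\dot\beta(t)}{\beta(t)}=\frac{m\ln t+p}{t\ln t}\to 0$ and $\frac{\ddot\beta(t)}{\dot\beta(t)}\to 0$ as $t\to+\infty$. Then \ref{H0}(i) is immediate for $t$ large: $\beta$ is positive and $\mathcal{C}^2$, and $\dot\beta(t)\neq 0$ since $m\ln t+p>0$ eventually, using $(m,p)\in\mathbb{R}_+^2\setminus\{(0,0)\}$. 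For \ref{H0}(ii), because $\frac{\dot\beta}{\beta}\to 0$ we have $\max\!\big(\frac{\dot\beta(t)}{\beta(t)},1\big)=1$ for $t$ large, so the requirement reduces to $1\le c-\mu$; it therefore suffices to fix $c>1$ and $0<\mu\le c-1$. Finally \ref{H0}(iii) holds because the quotient there has numerator $\to 1$ and denominator $\to\mu$, hence limsup equal to $\frac{1}{\mu}<+\infty$.

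With \ref{H0} in force, Theorem~\ref{Th1} yields at once the strong convergence $x(t)\to x^*$ together with \eqref{eq:8-1}--\eqref{eq:8-3}. Substituting $\beta(t)=t^m\ln^pt$ into \eqref{eq:8-1} gives \eqref{eq:20-1} directly. For \eqref{eq:20-2} I would note that in \eqref{eq:8-2} the term $e^{-\mu t}$ decays faster than any power of $t$ and is thus negligible against $\frac{\dot\beta(t)}{\beta(t)}$, so that $\|x(t)-y(t)\|^2=\mathcal{O}\!\big(\frac{\dot\beta(t)}{\beta(t)}\big)$, which is the announced rate \eqref{eq:20-2}.

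The delicate point is the gradient estimate \eqref{eq:20-3}: the crude bound \eqref{eq:8-3}, which only produces $\mathcal{O}(1/\beta)$, is too lossy to reproduce the announced dichotomy, so I would re-derive the gradient rate directly. The idea is to use the first-order optimality of $y(t)$ for $\varphi_t$, namely $\nabla f(y(t))=-\frac{c}{\beta(t)}\,y(t)$, and to split $\nabla f(x(t))=\big(\nabla f(x(t))-\nabla f(y(t))\big)-\frac{c}{\beta(t)}\,y(t)$. Since $x(t)$ is bounded (it converges to $x^*$) and $\norm{y(t)}\le\norm{x^*}$, the local Lipschitz continuity \ref{H01}(iii) of $\nabla f$ furnishes a constant $L$ with $\norm{\nabla f(x(t))}^2\le 2L^2\norm{x(t)-y(t)}^2+\frac{2c^2}{\beta^2(t)}\norm{x^*}^2$. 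Inserting \eqref{eq:20-2} for the first summand and $\frac{1}{\beta^2(t)}=\frac{1}{t^{2m}\ln^{2p}t}$ for the second, $\norm{\nabla f(x(t))}^2$ is dominated by the larger of two decaying quantities, one governed by $\frac{\dot\beta}{\beta}$ (of order $t^{-1}$ up to logarithms) and one of order $t^{-2m}$. These two powers of $t$ coincide exactly at $m=\tfrac12$, which is the origin of the case distinction: for $m>\tfrac12$ the first contribution governs, for $m<\tfrac12$ the second does, and at the borderline $m=\tfrac12$ the competition is settled by the logarithmic factors, which is why the sub-split on whether $p\ge 1$ or $0<p<1$ appears. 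Collecting dominant contributions yields the exponent $\min(1,2m)$ together with the stated logarithmic powers.

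The main obstacle is thus not the verification of \ref{H0}, which is routine once the asymptotics of $\frac{\dot\beta}{\beta}$ and $\frac{\ddot\beta}{\dot\beta}$ are available, but rather this sharp gradient estimate: one must bypass \eqref{eq:8-3}, balance the two competing decay rates term by term, and treat with care the borderline regime $m=\tfrac12$ where only the powers of $\ln t$ separate the two cases.
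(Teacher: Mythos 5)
Your route coincides with the paper's for everything except the gradient bound: the paper's entire treatment of this corollary consists of computing $\dot\beta/\beta$ and $\ddot\beta/\dot\beta$, checking \ref{H0} (with $c>0$ and $0<\mu\le c-1$, exactly as you do), and invoking Theorem \ref{Th1}; no further argument is given. Where you add genuine content is in recognising that \eqref{eq:8-3} alone cannot produce the dichotomy in \eqref{eq:20-3} and in supplying the decomposition $\nabla f(x(t))=\bigl(\nabla f(x(t))-\nabla f(y(t))\bigr)-\frac{c}{\beta(t)}y(t)$, hence $\norm{\nabla f(x(t))}^2\le 2L^2\norm{x(t)-y(t)}^2+\frac{2c^2}{\beta(t)^2}\norm{x^*}^2$. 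This is in fact the same mechanism the authors use for the discrete analogue (part (iii) of Theorem \ref{Th 2 disc}), so your reconstruction of the missing step is the right one, and it is the only way to see where the threshold $m=\tfrac12$ and the $\min(1,2m)$ exponent come from.

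There is, however, a quantitative slip that you inherit from the paper rather than introduce. You correctly compute $\frac{\dot\beta(t)}{\beta(t)}=\frac{m\ln t+p}{t\ln t}=\frac{m}{t}+\frac{p}{t\ln t}$, but then identify this with the rate $\frac{1}{t\ln t}$; that identification is valid only for $m=0$, whereas for $m>0$ the quotient is of exact order $\frac{1}{t}$, so \eqref{eq:8-2} yields $\norm{x(t)-y(t)}^2=\mathcal{O}(1/t)$ rather than \eqref{eq:20-2} as printed, and this propagates into the first branch of \eqref{eq:20-3}. Similarly, the second summand of your decomposition is $\mathcal{O}\bigl(1/(t^{2m}\ln^{2p}t)\bigr)$, with log exponent $2p$ rather than the $p$ appearing in the statement (the printed bound is still a valid, if non-sharp, upper bound since $\ln^{2p}t\ge\ln^{p}t$ eventually), and the borderline comparison at $m=\tfrac12$ between $\frac1t$ and $\frac{1}{t\ln^{2p}t}$ does not actually split at $p=1$. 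So your strategy is sound and matches the authors' intent, but if you carry out the bookkeeping honestly you will land on slightly different log powers than the corollary states; that discrepancy lies in the paper, not in your method.
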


\subsection{Case $\beta (t)=t^me^{\gamma t^r}$}
If $\beta (t)=t^me^{\gamma t^r}$ for $t\geq t_0>0, \; m\geq 0,\; \g>0$ and $ 0<r\leq 1$, then we have
\begin{eqnarray*}
&&\dot\b(t) = t^{m-1}e^{\gamma t^r} \left(m +r\g t^r\right),\\
&&\ddot\b(t) =  t^{m-2}e^{\gamma t^r} \left(m(m-1) + (2m+r-1)r\g t^r + r^2\g^2 t^{2r} \right).\\
\end{eqnarray*}
Then, for $t$ large enough, we have the function $\frac{\dot\b(t)}{ \b(t)}$ (respectively $\frac{\ddot\b(t)}{\dot \b(t)}$) is equivalent to $\frac{r\g}{ t^{1-r}}$ (respectively to  $\frac{r\g}{ t^{1-r}}$).\\
 Thus
	$$
	\underset{t\rightarrow +\infty}{\limsup} \,\frac{  1 + \frac{\dot{\beta } (t)}{\beta  (t)}}{\mu + \frac{\ddot{\beta } (t)}{{\dot\beta} (t)} - \frac{\dot{\b}(t)}{\b(t)}}= \left\{\begin{array}{lll}
	\dfrac{1}{\mu} <+\infty &\text{ if }& r<1,\\
	\dfrac{1+\g}{\mu} <+\infty &\text{ if }& r=1.
	\end{array}\right.
	$$
	We conclude that conditions on $\b$ are satisfied whenever  $c>0$ and $0<\m\leq c-1$.
	\begin{corollary}\label{CorCont2}
	If $f: \mathcal{H} \rightarrow \mathbb{R}$  satisfies  \ref{H01}, $\beta (t)=t^me^{\gamma t^r}$ for $t\geq t_0>0, \; m\geq 0,\; \g>0,\; 0<r\leq 1$, and $x(.): [t_0 , +\infty[ \rightarrow \mathcal{H}  $ is a solution of  \eqref{systeme continue}. Then
 $x(t)$ strongly converges to $x^*$, and  for  $t \geq t_0$  large enough
\begin{eqnarray}
\label{eq:20-1}&&f(x(t)) - \min_{\mathcal H}f = \mathcal{O}\left( \dfrac{1}{t^me^{\gamma t^r}}\right);\\
\label{eq:20-2}&& \|x(t) - y(t)\|^2  = \mathcal{O}\left( \dfrac{1}{t^{1-r}}\right);\\
\label{eq:20-3}&&\norm{\nabla f(x(t))}^2 	=\mathcal{O}\left( \dfrac{1}{t^{1-r}}\right).
\end{eqnarray}
	\end{corollary}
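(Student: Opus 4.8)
The plan is to read off Corollary~\ref{CorCont2} as the specialization of Theorem~\ref{Th1} to $\beta(t)=t^m e^{\gamma t^r}$, so that the only real work is (a) checking that this $\beta$ satisfies \ref{H0} and (b) converting the abstract rates $\mathcal{O}(1/\beta)$ and $\mathcal{O}(e^{-\mu t}+\dot\beta/\beta)$ into the stated powers of $t$. Step (a) is already carried out in the computation preceding the statement: from the expressions for $\dot\beta$ and $\ddot\beta$ one gets $\dot\beta(t)/\beta(t)\sim r\gamma\,t^{r-1}$ and $\ddot\beta(t)/\dot\beta(t)\sim r\gamma\,t^{r-1}$, whence the limsup in \ref{H0}(iii) equals $1/\mu$ when $r<1$ and $(1+\gamma)/\mu$ when $r=1$, in both cases finite.

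First I would fix admissible constants. Condition \ref{H0}(i) is immediate, since $\beta$ is positive, of class $\mathcal{C}^2$, and $\dot\beta$ never vanishes. For \ref{H0}(ii) note that $\dot\beta/\beta=(m+r\gamma t^r)/t$ tends to $0$ when $r<1$ and to $\gamma$ when $r=1$; hence for $t$ large $\max(\dot\beta/\beta,1)$ is eventually $1$ (resp. $\max(1,\gamma)$), so it suffices to take $c>1$ and $0<\mu\le c-1$ (and, in the borderline case $r=1$, also $\gamma\le c-\mu$). With these choices all hypotheses of Theorem~\ref{Th1} hold, so $x(t)\to x^*$ strongly.

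Next I would substitute. The value estimate $f(x(t))-\min_{\cH}f=\mathcal{O}(1/\beta(t))$ gives at once $\mathcal{O}\!\left(1/(t^m e^{\gamma t^r})\right)$. For the distance to the regularized minimizer, Theorem~\ref{Th1} gives $\|x(t)-y(t)\|^2=\mathcal{O}(e^{-\mu t}+\dot\beta/\beta)$; since $\dot\beta/\beta\sim r\gamma\,t^{r-1}=r\gamma/t^{1-r}$ and the exponential $e^{-\mu t}$ decays faster than this polynomial factor for every $0<r\le 1$, the bracket collapses to $\mathcal{O}(1/t^{1-r})$. For the gradient, Theorem~\ref{Th1} already yields $\|\nabla f(x(t))\|^2=\mathcal{O}(1/\beta(t))$; since $e^{\gamma t^r}$ outgrows every power of $t$, one has $\beta(t)\ge t^{1-r}$ for $t$ large, so $\mathcal{O}(1/\beta(t))\subset\mathcal{O}(1/t^{1-r})$, which is the stated bound. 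Equivalently, writing $\nabla f(x(t))=\nabla\varphi_t(x(t))-\frac{c}{\beta(t)}x(t)$ and bounding $\|\nabla\varphi_t(x(t))\|\le(L+c/\beta(t))\|x(t)-y(t)\|$ via the local Lipschitz continuity \ref{H01}(iii) together with the boundedness of the trajectory, one recovers $\mathcal{O}(\dot\beta/\beta)=\mathcal{O}(1/t^{1-r})$ directly.

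The only genuinely delicate point is the borderline exponent $r=1$, where $\dot\beta/\beta$ no longer tends to $0$ but to the constant $\gamma$: this both forces $c$ to be taken large enough that \ref{H0}(ii) survives the shift and changes the value of the limsup in \ref{H0}(iii) to $(1+\gamma)/\mu$. Once the constants are chosen to absorb this, every remaining step is a routine asymptotic comparison, so I do not expect any further obstacle; in particular the decay $\mathcal{O}(1/t^{1-r})$ for $\|x(t)-y(t)\|^2$ and for $\|\nabla f(x(t))\|^2$ degenerates to the trivial bound $\mathcal{O}(1)$ when $r=1$, consistently with the strong convergence still being governed by the value rate $\mathcal{O}(1/(t^m e^{\gamma t}))$.
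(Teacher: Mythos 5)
Your proposal is correct and follows essentially the same route as the paper: verify $(\mathbf{H}_\beta)$ from the asymptotics $\dot\beta/\beta\sim r\gamma t^{r-1}$ and $\ddot\beta/\dot\beta\sim r\gamma t^{r-1}$, then specialize the three rates of Theorem~\ref{Th1}. Your extra remark that the borderline case $r=1$ forces the additional constraint $\gamma\le c-\mu$ in $(\mathbf{H}_\beta)$(ii) is a point the paper glosses over (it only states $0<\mu\le c-1$), and is a welcome clarification rather than a deviation.
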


\if{	
\subsection{Case $\beta (t)=e^{\m t} e^{\frac{\d}{\m}e^{\m t}}$}
If $\beta (t)=e^{\m t} e^{\frac{\d}{\m}e^{\m t}}$ for $t\geq t_0>0, \; \m,\d >0$, then we have
\begin{eqnarray*}
&&\dot\b(t) = \left(\m+ \d e^{\m t} \right) e^{\m t+\frac{\d}{\m}e^{\m t}},\\
&&\ddot\b(t) =  \left(\m^2+(3\m +\d )\d e^{\m t} \right) e^{\m t+\frac{\d}{\m}e^{\m t}}.\\
\end{eqnarray*}
Then, for $t$ large enough, we have the function $\frac{\dot\b(t)}{ \b(t)}$ (respectively $\frac{\ddot\b(t)}{\dot \b(t)}$) is equivalent to $\m+ \d e^{\m t} $ (respectively to  $3\m +\d  $).\\
 Thus
	$$
	\underset{t\rightarrow +\infty}{\limsup} \,\frac{  1 + \frac{\dot{\beta } (t)}{\beta  (t)}}{\mu + \frac{\ddot{\beta } (t)}{{\dot\beta} (t)} - \frac{\dot{\b}(t)}{\b(t)}}= \frac{\m+\d}{2\m} <+\infty	.
	$$
	We conclude that conditions on $\b$ are satisfied whenever  $c>0$ and $0<\m\leq c-1$.
	\begin{corollary}\label{CorCont2}
	If $f: \mathcal{H} \rightarrow \mathbb{R}$  satisfies  \ref{H01}, $\beta (t)=e^{\m t} e^{\frac{\d}{\m}e^{\m t}}$ for $t\geq t_0>0, \; \m,\d >0$, and $x(.): [t_0 , +\infty[ \rightarrow \mathcal{H}  $ is a solution of  \eqref{systeme continue}. Then
 $x(t)$ strongly converges to $x^*$, and  for  $t \geq t_0$  large enough
\begin{eqnarray}
\label{eq:20-1}&&f(x(t)) - \min_{\mathcal H}f = \mathcal{O}\left( \dfrac{1}{t^me^{\gamma t^r}}\right);\\
\label{eq:20-2}&& \|x(t) - y(t)\|^2  = \mathcal{O}\left( \dfrac{1}{t^{1-r}}\right);\\
\label{eq:20-3}&&\norm{\nabla f(x(t))}^2 	=\mathcal{O}\left( \dfrac{1}{t^{1-r}}\right).
\end{eqnarray}
	\end{corollary}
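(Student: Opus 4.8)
The plan is to read off Corollary~\ref{CorCont2} from Theorem~\ref{Th1}: since the conclusions \eqref{eq:8-1}--\eqref{eq:8-3} and the strong convergence $x(t)\to x^*$ are already available once $\b$ obeys \ref{H0}, the entire task is to verify \ref{H0} for $\b(t)=t^me^{\g t^r}$ and then to insert the explicit $\b$ and $\dot\b/\b$ into those rates. Condition \ref{H0}(i) is immediate, $\b$ being positive and $\mathcal C^{\infty}$ on $[t_0,+\infty[$ with $\dot\b(t)=t^{m-1}e^{\g t^r}(m+r\g t^r)>0$ for large $t$.

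First I would record the two logarithmic derivatives from the expressions for $\dot\b,\ddot\b$ given above; a direct division yields
\[
\frac{\dot\b(t)}{\b(t)}=\frac{m}{t}+\frac{r\g}{t^{1-r}},\qquad
\frac{\ddot\b(t)}{\dot\b(t)}=\frac{r\g}{t^{1-r}}+\frac{m+r-1}{t}-\frac{mr}{t\,(m+r\g t^r)},
\]
so that both ratios are asymptotically equivalent to $r\g\,t^{r-1}$. For \ref{H0}(ii) I then split into two regimes: when $0<r<1$ one has $\dot\b/\b\to 0$, hence $\max(\dot\b/\b,1)=1$ eventually and it suffices to ask $c>1$ and $0<\m\le c-1$; in the boundary case $r=1$ one has $\dot\b/\b\to\g$, and the requirement becomes $c-\m\ge\max(\g,1)$.

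The crux is \ref{H0}(iii). The point is that the leading $r\g\,t^{r-1}$ terms of $\ddot\b/\dot\b$ and of $\dot\b/\b$ cancel, and the formulas above give
\[
\frac{\ddot\b(t)}{\dot\b(t)}-\frac{\dot\b(t)}{\b(t)}=\frac{r-1}{t}-\frac{mr}{t\,(m+r\g t^r)}\longrightarrow 0 .
\]
Hence the denominator $\m+\frac{\ddot\b}{\dot\b}-\frac{\dot\b}{\b}$ tends to $\m>0$, while the numerator $1+\dot\b/\b$ tends to $1$ if $r<1$ and to $1+\g$ if $r=1$; in both cases the $\limsup$ is finite, equal to $1/\m$ or $(1+\g)/\m$. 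I expect this cancellation to be the only real (if elementary) difficulty: estimating the two ratios separately is useless, since each alone sends the quotient to an indeterminate form, so one must keep the next-order term to see that their difference vanishes.

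Once \ref{H0} is in force, Theorem~\ref{Th1} yields the strong convergence $x(t)\to x^*$ together with \eqref{eq:8-1}--\eqref{eq:8-3}, and the rates follow by substitution. Estimate \eqref{eq:8-1} gives $f(x(t))-\min_{\cH}f=\mathcal O(1/\b(t))=\mathcal O(1/(t^me^{\g t^r}))$. Since $\dot\b/\b\sim r\g\,t^{r-1}$ dominates the exponentially small $e^{-\m t}$, the bound in \eqref{eq:8-2} simplifies to $\mathcal O(t^{r-1})$, i.e. $\|x(t)-y(t)\|^2=\mathcal O(1/t^{1-r})$; and \eqref{eq:8-3}, whose right-hand side decays even faster than $t^{r-1}$, yields the same (non-sharp) bound $\|\n f(x(t))\|^2=\mathcal O(1/t^{1-r})$. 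For $r=1$ these last two reduce to the trivial $\mathcal O(1)$.
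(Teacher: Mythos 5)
Your proposal is correct and follows the same route as the paper: verify \ref{H0} for $\beta(t)=t^m e^{\gamma t^r}$ and then specialize the conclusions of Theorem \ref{Th1}, substituting $\dot\beta/\beta\sim r\gamma t^{r-1}$ into the rates. You are in fact more careful than the text at the one delicate point, namely that $\frac{\ddot\beta(t)}{\dot\beta(t)}-\frac{\dot\beta(t)}{\beta(t)}\to 0$ must be obtained by exact cancellation of the leading $r\gamma t^{r-1}$ terms (the paper only records that each ratio is separately equivalent to $r\gamma t^{r-1}$, which by itself does not determine the limit of their difference), and you rightly observe that for $r=1$ condition \ref{H0}(ii) additionally forces $c-\mu\geq\gamma$, a constraint the paper's summary ``$c>0$ and $0<\mu\leq c-1$'' omits.
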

}\fi

\begin{figure} 
 \includegraphics[scale=0.35]{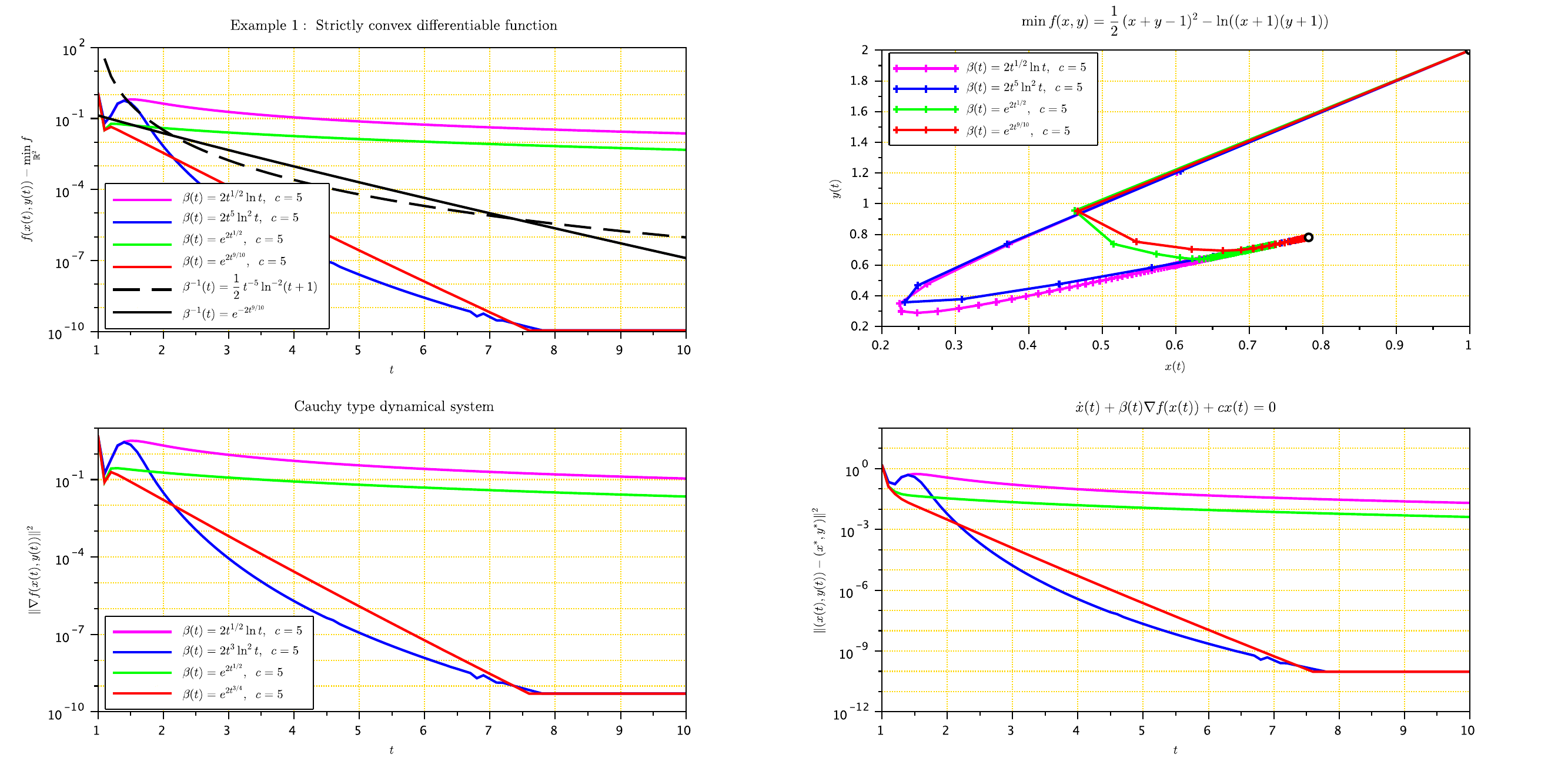}
  \caption{Convergence rates of
values, trajectories  and gradients.}
 \label{fig:trigs-c} 
\end{figure}
\if{
\section{Let's go from the first ordre to the implicit second order}
Having in view to develop rapid optimization methods, our study focuses on the second-order time evolution system obtained by applying the "time scaling and averaging" method of Attouch, Bot and Ngyuen \cite{ABotNguyen} to the dynamic \eqref{damped-id}.
In doing so, we hope to take advantage of both the fast convergence properties attached to the inertial gradient methods with vanishing damping coefficients and the strong convexity properties attached to Tikhonov's regularization.

\noindent So, let us make the change of time variable $s=\tau (t)$ in  the damped inertial dynamic 
\begin{equation}\label{damped-idz}
 \dot{z}(s) +  \b(t)\nabla f (z(s)) + cz(s) =0,
\end{equation}
where we take $z$ as a state variable and $s$ as a time variable, which will end up with $x$ and $t$ after time  scaling and averaging.  The time scale $\tau (\cdot)$ is a $\mathcal C^2$  increasing function from $[t_0,+\infty[$ to $[s_0,+\infty[$, which satisfies $\lim_{t \to +\infty}\tau (t) = + \infty$.  Setting $s=\tau (t)$ and  multiplying (\ref{damped-idz}) by $\dot\tau(t)>0$, we obtain
\begin{eqnarray}
\dot\tau(t)\dot{z}(\tau(t)) +  \dot\tau(t)\b(\tau(t))\nabla f (z(\tau(t))) + c\dot\tau(t) z(\tau(t))  &=& 0.  \label{change var1}
\end{eqnarray}
Set $v(t):= z(\tau(t))$.
By the derivation chain rule, we have
$
\dot{v} (t)= \dot{\tau}(t) \dot{z}(\tau(t))
$.
So reformulating  (\ref{change var1}) in terms of $v(\cdot)$ and its derivatives, we obtain
\begin{equation}\label{1st-damped-id}
\dot{v} (t) +  \dot\tau(t)\b(\tau(t))\nabla f (v(t)) + c\dot\tau(t)  v(t)=0.
\end{equation}
Let us now consider the averaging process which consists in passing from $v$ to $x$ which is defined by means of the first-order linear differential equation
\begin{equation}\label{1st-damped-idd}
v(t)=x(t) + \dot\tau(t)\dot x(t). 
\end{equation}
By temporal derivation of \eqref{1st-damped-idd} we get
\begin{equation}\label{1st-damped-iddd} 
\dot v(t) = \dot x(t) + \ddot\tau(t)\dot x(t) + \dot\tau(t)\ddot x(t).
\end{equation}
Replacing $v(t)$ and  $\dot v(t)$ in \eqref{1st-damped-id}, we get
\begin{equation}\label{1st-damped-id4}
\dot\tau(t) \ddot x(t) +(1+ \ddot\tau(t))\dot x(t) + \dot\tau(t)\b(\tau(t))\nabla f\left[x(t)+\dot\tau(t)\dot x(t)\right] + c\dot\tau(t)  \left[x(t)+\dot\tau(t)\dot x(t)\right]=0.
\end{equation}
Dividing by $\dot\tau(t) >0$, we finally obtain
\begin{equation}\label{1st-damped-id505}
 \ddot x(t) +\frac{1+ \ddot\tau(t)}{\dot\tau(t)}\dot x(t) + \b(\tau(t))\nabla f\left[x(t)+\dot\tau(t)\dot x(t)\right] + c \left[x(t)+\dot\tau(t)\dot x(t)\right]=0.
\end{equation}
According to the Su, Boyd and Cand\`es \cite{SBC} model for the Nesterov accelerated gradient method, we consider the case where the viscous damping coefficient in \eqref{1st-damped-id505} satisfies $ \frac{1+ \ddot\tau(t)}{\dot\tau(t)} = \frac{\alpha}{t}$ for some $\alpha>1$. Thus $\tau(t) = \dfrac{t^2}{2(\alpha - 1)}$, and the system \eqref{1st-damped-id505} becomes
\begin{equation}\label{1st-damped-id55}
 \ddot x(t) +\frac{\alpha}{t}  \dot x(t) + \b(\tau(t))\nabla f\left[x(t)+\dfrac{t}{\alpha - 1}\dot x(t)\right] + c \left[x(t)+\dfrac{t}{\alpha - 1}\dot x(t)\right]=0.
\end{equation}

%
%
\begin{theorem}\label{thm:model-inertial-b}
	Take    $\alpha >3$ and $\delta >1$, which gives $\alpha + 2\delta >5$.
	Let $x : [t_0, +\infty[ \to \mathcal{H}$ be a solution trajectory of
	\eqref{2d-damped-id-p111}.		
Then,  the following properties are satisfied. 	
	\begin{eqnarray}
	&&(i) \,\,  f(x(t))-\min_{\cH} f= \mathcal O\left( \frac{1}{t^{2}}  \right) \mbox{ as } \; t \to +\infty;\label{contr:fx(t)22}\\ 
	&& (ii) \,\, \mbox{ There is   strong convergence of } x(t)
	 \mbox{ to the minimum norm solution } x^*.\nonumber\\
	 &&(iii) \,\,
\| \nabla f(x(t)) \| = \mathcal O\left( \dfrac{1}{t} \right)  \mbox{ as } \; t \to +\infty.
	\end{eqnarray}
\end{theorem}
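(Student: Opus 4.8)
The plan is to run a direct Lyapunov analysis on the second-order system \eqref{2d-damped-id-p111}, which is the time-scaling-and-averaging image of the first-order flow, mimicking the Su--Boyd--Cand\`es treatment of the Nesterov dynamics but corrected so as to absorb the Tikhonov term and to anchor the trajectory at the minimum norm solution $x^*$. Write $s=\tau(t)=\frac{t^2}{2(\alpha-1)}$, let $v(t):=x(t)+\frac{t}{\alpha-1}\dot x(t)$ be the averaged variable at which the gradient is evaluated, and recall $\varphi_s(\cdot):=f(\cdot)+\frac{c}{2\beta(s)}\|\cdot\|^2$ with unique minimizer $y_s:=\argmin_{\cH}\varphi_s$, so that \eqref{2d-damped-id-p111} reads compactly $\ddot x+\frac{\alpha}{t}\dot x+\beta(s)\nabla\varphi_s(v)=0$ up to the $\delta$-dependent implicit (Hessian) damping. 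I would then introduce the energy
$$\mathcal{E}(t):=t^2\beta(\tau(t))\big(\varphi_{\tau(t)}(v(t))-\varphi_{\tau(t)}(y_{\tau(t)})\big)+\tfrac12\big\|(\alpha-1)\big(x(t)-y_{\tau(t)}\big)+t\dot x(t)\big\|^2+\tfrac{\xi}{2}\big\|x(t)-y_{\tau(t)}\big\|^2,$$
where the weight $\xi>0$ of the correction term is calibrated, as in the supercritical regime, to a positive multiple of $\alpha+2\delta-5$; this is exactly where the hypotheses $\alpha>3$ and $\delta>1$ (hence $\alpha+2\delta>5$) enter.

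Next I differentiate $\mathcal E$ along the trajectory. Substituting $\ddot x$ from \eqref{2d-damped-id-p111} eliminates the second derivative, and the implicit Hessian-driven damping is precisely the device that lets me express the gradient contribution through $\nabla\varphi_s(v)$ without ever differentiating $\nabla f$. The $\frac{c}{\beta(s)}$-strong convexity of $\varphi_s$ turns $\langle\nabla\varphi_s(v),\,v-y_s\rangle$ into $\varphi_s(v)-\varphi_s(y_s)+\frac{c}{2\beta(s)}\|v-y_s\|^2$, while plain convexity controls the anchor cross terms. After collecting, the coefficient multiplying $\varphi_s(v)-\varphi_s(y_s)$ is nonpositive precisely under $\alpha>3$ together with the growth bound on $\beta$ inherited from \ref{H0}, and the correction $\tfrac{\xi}{2}\|x-y_s\|^2$ cancels the residual inertial cross term $\langle x-y_s,\dot x\rangle$ exactly as in Su--Boyd--Cand\`es. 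The genuinely new terms are those carrying the time dependence of the target $y_s$; these I would dominate using the two quantitative facts already invoked in the proof of Theorem \ref{Th1}, namely $\|y_s\|\le\|x^*\|$ and $\|\dot y_s\|\le\frac{\dot\beta(s)}{\beta(s)}\|y_s\|$ from \cite[Lemma 2]{ABCR-JDE}, together with $\dot s=\frac{t}{\alpha-1}$. The outcome is a differential inequality $\dot{\mathcal E}(t)\le g(t)$ with $g\in L^1([t_0,+\infty))$, whence $\mathcal E(t)=\mathcal O(1)$.

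From $\mathcal E(t)=\mathcal O(1)$ the value coordinate controls $\varphi_{\tau(t)}(v(t))-\varphi_{\tau(t)}(y_{\tau(t)})$; combining with $\varphi_s(y_s)\le\varphi_s(x^*)$ and the decomposition $f-\min_{\cH}f\le\big(\varphi_s(\cdot)-\varphi_s(x^*)\big)+\frac{c}{2\beta(s)}\|x^*\|^2$ used in Theorem \ref{Th1}, and transferring from $v$ to $x$ by convexity, the value gap is governed by the Tikhonov bias $\frac{c}{2\beta(\tau(t))}\|x^*\|^2$, which the schedule underlying \eqref{2d-damped-id-p111} keeps at $\mathcal O(1/t^2)$; this is claim (i). For claim (iii) I would invoke the descent-type inequality $\|\nabla f(x)\|^2\le 2L\big(f(x)-\min_{\cH}f\big)$ from Lemma \ref{ext_descent_lemma}, valid on the bounded region containing the trajectory thanks to \ref{H01}(iii); the square root of the $\mathcal O(1/t^2)$ value bound gives the announced $\mathcal O(1/t)$ on $\|\nabla f(x)\|$. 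Claim (ii) is the delicate one: energy boundedness alone gives only $\|x(t)-y_{\tau(t)}\|=\mathcal O(1)$, so to upgrade to strong convergence I would integrate the strong-convexity surplus $\frac{c}{2\beta(s)}\|v-y_s\|^2$ discarded above, show its time integral is finite, deduce $\|x(t)-y_{\tau(t)}\|\to0$ at least along a sequence, and then conclude $x(t)\to x^*$ from $y_s\to x^*$---the defining property of the Tikhonov path, established through the variational convergence recalled in Section 2---following the strong-convergence scheme of \cite{ABCR-JDE}.

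I expect the main obstacle to be the sign bookkeeping when differentiating $\mathcal E$: one must simultaneously use $\alpha>3$ to dominate the inertial cross term through the correction weight $\xi$ (the classical Su--Boyd--Cand\`es mechanism) and use $\delta>1$ to dominate the truly new terms generated by the regularization $c\,v(t)$ and by the motion of $y_s$, and these two requirements must be reconciled through the single admissible window $\alpha+2\delta>5$. The secondary difficulty is the strong-convergence step (ii), where the inertial structure prevents reading off $\|x-y_s\|\to0$ directly from the energy and forces the auxiliary summability argument above, since $\frac{t}{\alpha-1}\dot x$ need not tend to zero; everything else reduces to the Gronwall-and-integrate routine already exercised in the proof of Theorem \ref{Th1}.
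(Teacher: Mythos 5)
Your proposal takes a genuinely different route from the paper, and as written it has two concrete gaps. The paper never performs a Lyapunov analysis on the second-order system at all: it exploits the fact that this system was manufactured from the first-order flow of Theorem \ref{Th1} by time scaling and averaging (in the sense of Attouch, Bo\c{t} and Nguyen). With $v(t)=x(t)+\frac{t}{\alpha-1}\dot x(t)$, the defining relation is the linear ODE
\begin{equation*}
\frac{d}{dt}\bigl(t^{\alpha-1}x(t)\bigr)=(\alpha-1)\,t^{\alpha-2}v(t),
\qquad\text{hence}\qquad
x(t)=\int_{t_0}^{t}v(\theta)\,d\mu_t(\theta)
\end{equation*}
for an explicit family of probability measures $\mu_t$ on $[t_0,t]$. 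Claim (i) then follows from Jensen's inequality applied to $f-\min_{\cH}f$ together with the value rate for $v$ already furnished by Theorem \ref{Th1} (the hypothesis $\alpha>3$ enters only in estimating $\int \theta^{-2}d\mu_t(\theta)=\mathcal O(t^{-2})$); claim (ii) follows because averages of a strongly convergent function converge to the same limit (split the integral at a large time $s_a$ where $\|v(\theta)-x^*\|<a$); claim (iii) is Lemma \ref{ext_descent_lemma}, exactly as in your plan. So the paper's proof is a pure transfer argument costing nothing beyond Theorem \ref{Th1}, whereas you propose to redo an energy estimate from scratch.

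The gaps in your scheme are these. First, in step (i) you ``transfer from $v$ to $x$ by convexity''; but pointwise convexity only gives $f(x)\geq f(v)+\langle\nabla f(v),\,x-v\rangle$, a \emph{lower} bound, and there is no pointwise inequality bounding $f(x(t))$ above through $f(v(t))$, since $v(t)-x(t)=\frac{t}{\alpha-1}\dot x(t)$ need not be small. The inequality you need is Jensen over the averaging measure $\mu_t$, i.e.\ precisely the integral representation your plan does not invoke. Second, and more seriously, your step (ii) concludes $\|v(t)-y_{\tau(t)}\|\to 0$ only ``along a sequence''; sequential convergence does not yield strong convergence of the trajectory, and you offer no mechanism to upgrade it --- note that in the first-order setting the paper obtains full-trajectory decay of $\|x(t)-y(t)\|^2$ from a weighted $e^{\mu t}$ Gronwall inequality, not from a mere $L^1$ bound on the discarded strong-convexity surplus. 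Even granting $\|v(t)-y_{\tau(t)}\|\to 0$ for all $t$, you would still face the passage from $v$ to $x$, which is again the averaging/ergodic step. In short, your direct Lyapunov program (essentially the scheme of \cite{ABCR-JDE}) might be completable for (i) with heavy bookkeeping, but both of its weak points are resolved in the paper by the single idea your proposal omits: $x$ is an average of $v$.
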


\begin{proof}
According to Theorem \ref{{Th1}, the rescaled function $v(t):= z(\tau(t))$ satisfies
	\begin{eqnarray}
	&&
	f(v(t))-\min_{\cH} f= \mathcal O\left( \frac{1}{\b(t)}  \right)  \mbox{ as } \; t \to +\infty;\label{contr:fv(t)20}
	\end{eqnarray}
	Our objective  is now to obtain the corresponding convergence rate of the values $f(x(t))-\min_{\cH} f$ as $t \to +\infty$, and the strong convergence of 
 $x(t)$ to $x^*$   the minimum norm element of $S$.
\if{
 The following proof is inspired by  Attouch, Bot, Ngyuen \cite{ABotNguyen}. It highlights the averaging interpretation of the passage from $v$ to $x$.
\\
$(i)$  Let us rewrite \eqref{1st-damped-idd}  as
\begin{equation}\label{change var28}
	t \dot{x}(t) + (\alpha -1) x(t) = (\alpha -1) v(t).
\end{equation}
\noindent After multiplication of  \eqref{change var28} by $t^{\alpha -2}$, we get equivalently
\begin{equation}\label{change var29}
	t^{\alpha -1} \dot{x}(t) + (\alpha -1)t^{\alpha -2} x(t) = (\alpha -1)t^{\alpha -2} v(t),
\end{equation}
that is
\begin{equation}\label{change var30}
	\frac{d}{dt} \left( t^{\alpha -1}x(t)\right)  = (\alpha -1)t^{\alpha -2} v(t).
\end{equation}
By integrating \eqref{change var30} from $t_{0}$ to $t$, and according to  $ x(t_{0})=x_0$, we obtain
\begin{eqnarray}
	x(t) &=&  \frac{t_{0}^{\alpha -1}}{ t^{\alpha -1}} v(t_0) + \frac{\alpha -1}{t^{\alpha -1}}\int_{t_{0}}^t \theta^{\alpha -2} v(\theta)d\theta \label{def:x}
\end{eqnarray}
where for simplicity we take $v(s_0)=x_0$.  
Then, observe that $x(t)$ can be simply written as follows
\begin{equation}\label{proba-formulation}
	x(t) =   \int_{t_{0}}^t v(\theta)\,  d\mu_{t} (\theta) 
\end{equation}
where $\mu_t$ is the positive  Radon  measure on $[t_{0}, t]$ defined by
$$
\mu_t = \frac{t_{0}^{\alpha -1}}{ t^{\alpha -1}} \delta_{t_{0}} +  (\alpha -1) \frac{\theta^{\alpha -2}}{t^{\alpha -1}} d\theta .
$$
We are therefore led to examining the convergence rate of  $f\left(\int_{t_{0}}^t v(\theta)  d\mu_{t} (\theta)\right) -\min_{\cH} f$ towards zero as $t \to + \infty$.
We have  that $\mu_t$ is a positive Radon measure on $[t_{0}, t]$  whose total mass is equal to $1$. It is therefore a probability measure, and $\int_{t_{0}}^t v(\theta)\,  d\mu_{t} (\theta)$ is obtained by \textbf{averaging} the trajectory $v(\cdot)$ on $[t_{0},t]$ with respect to  $\mu_t$.
From there, we can deduce fast convergence properties of $x(\cdot)$.
According to the convexity of $f$, and  Jensen's inequality, we obtain that
\begin{eqnarray*}
	f\left(\int_{t_{0}}^t v(\theta)\,  d\mu_{t} (\theta)\right)  -\min_{\cH} f &=& (f -\min_{\cH} f ) \left( \int_{t_{0}}^t v(\theta)  d\mu_t (\theta)\right)\\
	&\leq& \int_{t_{0}}^t  \left( f (v(\theta)) -\min_{\cH} f \right) d\mu_t (\theta)
	\\
	&\leq& L_f \int_{t_{0}}^t  \frac{1}{\theta^2} d\mu_t (\theta) \quad \forall t \geq t_0,
\end{eqnarray*}
where the last  inequality above comes from \eqref{contr:fv(t)20}.
According to the definition of $\mu_t$, it yields
\begin{eqnarray*}
	f\left( \int_{t_{0}}^t v(\theta)\,  d\mu_{t} (\theta)\right)  -\min_{\cH} f 
	&\leq&  \frac{L_f t_{0}^{\alpha -3}}{ t^{\alpha -1}}  +    L_f(\alpha -1) \frac{1}{t^{\alpha -1}}  \int_{t_{0}}^t  \theta^{\alpha -4} d\theta\\
	&\leq&\frac{L_f t_{0}^{\alpha -3}}{ t^{\alpha -1}} + L_f \frac{ \alpha-1}{\alpha-3}\left( \frac{1 }{ t^2} - \frac{t_{0}^{\alpha -3}}{t^{\alpha -1}}\right)
	 \quad \forall t \geq t_0.
\end{eqnarray*}
According to  \eqref{proba-formulation} we get 
\medskip
\noindent $\bullet$ For $1 <\alpha <3$, 
$$f(x(t)) -\min_{\cH} f \leq \frac{C}{t^{\alpha -1}}.$$
$\bullet$ For $\alpha >3$, we have $\dfrac{t_0^{\alpha-3}}{ t^{\alpha -1}} \leq \dfrac{1 }{ t^2}$  for every $t \geq t_0$. We therefore obtain 
$$
 f(x(t))-\min_{\cH} f= \mathcal O\left( \frac{1}{t^{2}}  \right) \mbox{ as } \; t \to +\infty
 $$
$(ii)$ According to Theorem \ref{thm:model-ab}, the rescaled function $v(\cdot )$  strongly converges to the minimum norm solution. From  the interpretation of $x$ as an average of $v$, and  using the fact that convergence implies ergodic convergence, we deduce the strong convergence of $x(t)$ towards the minimum norm solution.
This argument is detailed in the next paragraph, so we omit the details here.
\medskip
$(iii)$ We have obtained in $ii)$ that the trajectory $x(\cdot)$ converges. Therefore, it is bounded. Let $L>0$ be the Lipschitz constant of   $\nabla f$ on a ball that contains the trajectory  $x(\cdot)$.   According to the convergence rate of values  and Lemma \ref{ext_descent_lemma} in the Appendix, we immediately obtain the following  convergence rate of the gradients towards zero
\vspace{-7pt}
\begin{equation*}
	\dfrac{1}{2L} \left\lVert \nabla f(x(t)) \right\rVert ^{2} \leq f(x(t)) -\min_{\cH} f =\mathcal O\left( \frac{1}{t^2} \right).
\end{equation*}
So,
$$ \|\nabla f (x(t))\|=\mathcal{O}\left(\dfrac{1}{t} \right) \mbox{ as } \; t \to +\infty.
$$
This completes the proof. \qed
\end{proof}
\subsection{$\varepsilon (t)=\frac1{t^r}$ for $0<r<1$ }
The system \eqref{1st-damped-id55} becomes
\begin{equation}\label{2d-damped-id-p1}
 \ddot x(t) +  \frac{\alpha}{t}\dot x(t) + \nabla f\left[x(t)+\dfrac{s}{\alpha - 1}\dot x(t)\right] + \left(\dfrac{2(\alpha - 1)}{t^2}\right)^{r} \left[x(t)+\dfrac{t}{\alpha - 1}\dot x(t)\right]=0.
\end{equation}
}\fi
According to Theorem \ref{thm:model-a}, the rescaled function $v(t):= z(\tau(t))$ satisfies
	\begin{eqnarray}
	&&
	f(v(t))-\min_{\cH} f= \mathcal O \left( \displaystyle\frac{1}{\tau(t)^{r} }   \right) \mbox{ as } \; t \to +\infty;\label{contr:fv(t)2}\\
	&& \|v(t) -x_{\varepsilon(\tau(t))}\|^2=\mathcal{O}\left(\dfrac{1}{ \tau(t)^{1-r}}\right) \mbox{ as } \; t \to +\infty. \label{contr:v(t)2}
	\end{eqnarray}
	Our objective  is now to obtain the corresponding convergence rate of the values $f(x(t))-\min_{\cH} f$ as $t \to +\infty$, and strong convergence of 
 $x(t)$ to $x^*$   the minimum norm element of $S$.

\begin{theorem}\label{thm:model-inertial}
	Take   $0<r< 1$ and $\alpha >1$.
	Let $x : [t_0, +\infty[ \to \mathcal{H}$ be a solution trajectory of
\begin{equation}\label{2d-damped-id-p1-t}
\ddot x(t) + \left( \frac{\alpha}{t}  +\dfrac{2^r (\alpha - 1)^{r-1}}{t^{2r-1}}  \right)  \dot x(t) + \nabla f\left(x(t)+\dfrac{t}{\alpha - 1}\dot x(t)\right) + 
\dfrac{2^r(\alpha - 1)^r}{t^{2r}} x(t)=0.
\end{equation}		
Then,  the following properties are satisfied. 	
	\begin{eqnarray}
	&&(i) \,\,  f(x(t))-\min_{\cH} f= \mathcal O\left( \frac{1}{t^{\alpha - 1}} + \frac{1}{t^{2r}} \right) \mbox{ as } \; t \to +\infty;\label{contr:fx(t)2}\\
	&& (ii)  \, \,\mbox{ There is   strong convergence of } x(t) \mbox{ to the minimum norm solution } x^* ; \\
	&& (iii) \,\, \| \nabla f(x(t)) \|^2 = \mathcal O\left( \frac{1}{t^{\alpha - 1}} + \frac{1}{t^{2r}} \right) \mbox{ as } \; t \to +\infty. \label{contr:x(t)2}
	\end{eqnarray}
\end{theorem}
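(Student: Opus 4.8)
The plan is to exploit the time-scaling-and-averaging origin of \eqref{2d-damped-id-p1-t}. Introducing the averaged variable
\[
v(t):=x(t)+\frac{t}{\alpha-1}\dot x(t),
\]
which corresponds to the time scale $\tau(t)=\frac{t^2}{2(\alpha-1)}$ (so that $\dot\tau(t)=\frac{t}{\alpha-1}$), the function $v(\cdot)$ solves the rescaled first-order system and therefore inherits from Theorem \ref{thm:model-a} the estimates \eqref{contr:fv(t)2}--\eqref{contr:v(t)2}. Since $\tau(t)^r$ is of order $t^{2r}$, this reads $f(v(t))-\min_{\cH}f=\mathcal{O}(1/t^{2r})$, and combining \eqref{contr:v(t)2} with the strong convergence of the Tikhonov path $x_{\varepsilon(\tau(t))}\to x^*$ yields $v(t)\to x^*$ strongly. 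First I would rewrite the defining relation as $t\dot x(t)+(\alpha-1)x(t)=(\alpha-1)v(t)$, multiply by $t^{\alpha-2}$ to recognize a total derivative $\frac{d}{dt}\bigl(t^{\alpha-1}x(t)\bigr)=(\alpha-1)t^{\alpha-2}v(t)$, and integrate from $t_0$ to $t$. This represents $x(t)$ as an average
\[
x(t)=\int_{t_0}^{t}v(\theta)\,d\mu_t(\theta),\qquad \mu_t=\frac{t_0^{\alpha-1}}{t^{\alpha-1}}\delta_{t_0}+(\alpha-1)\frac{\theta^{\alpha-2}}{t^{\alpha-1}}\,d\theta,
\]
where $\mu_t$ is a probability measure on $[t_0,t]$.

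For \emph{(i)}, by convexity of $f$ and Jensen's inequality applied to this probability measure,
\[
f(x(t))-\min_{\cH}f\le\int_{t_0}^{t}\bigl(f(v(\theta))-\min_{\cH}f\bigr)\,d\mu_t(\theta).
\]
Inserting the bound $f(v(\theta))-\min_{\cH}f\le C/\theta^{2r}$ leaves the point-mass contribution $\mathcal{O}(1/t^{\alpha-1})$ together with $\frac{(\alpha-1)}{t^{\alpha-1}}\int_{t_0}^{t}\theta^{\alpha-2-2r}\,d\theta$. The remaining integral is elementary, with a case distinction: if $\alpha-1>2r$ it is of order $t^{\alpha-1-2r}$, producing a term $\mathcal{O}(1/t^{2r})$; if $\alpha-1<2r$ it converges, producing $\mathcal{O}(1/t^{\alpha-1})$; the borderline case yields a harmless logarithm. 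In all cases we obtain \eqref{contr:fx(t)2}.

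For \emph{(ii)}, the key is that averaging preserves strong limits when the mass of $\mu_t$ escapes to infinity. Given $\varepsilon>0$, pick $T$ with $\|v(\theta)-x^*\|<\varepsilon$ for $\theta\ge T$; splitting $x(t)-x^*=\int_{t_0}^{t}(v(\theta)-x^*)\,d\mu_t(\theta)$ over $[t_0,T]$ and $[T,t]$, the first piece is controlled by $\mu_t([t_0,T])\sup_{[t_0,T]}\|v-x^*\|$, whose weight satisfies $\mu_t([t_0,T])=(T/t)^{\alpha-1}\to0$, while the second piece is bounded by $\varepsilon$. Hence $\limsup_{t}\|x(t)-x^*\|\le\varepsilon$ for every $\varepsilon>0$, i.e. $x(t)\to x^*$ strongly.

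Finally, \emph{(iii)} is immediate once \emph{(ii)} is established: the trajectory $x(\cdot)$ is then bounded, so $\nabla f$ is Lipschitz with some constant $L$ on a ball containing it, and the descent-type inequality of Lemma \ref{ext_descent_lemma} gives $\frac{1}{2L}\|\nabla f(x(t))\|^2\le f(x(t))-\min_{\cH}f$, whence \eqref{contr:x(t)2} follows from \emph{(i)}. I expect the main obstacle to be step \emph{(ii)}: one must guarantee that averaging really transmits \emph{strong} (not merely weak) convergence, which is exactly where the escape-to-infinity of $\mu_t$ and the availability of genuine strong convergence of $v$ from Theorem \ref{thm:model-a} are essential; by contrast the value estimate \emph{(i)} is routine bookkeeping of the Jensen integral.
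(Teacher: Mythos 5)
Your proposal is correct and follows essentially the same route as the paper: represent $x(t)$ as an average of $v(t)=x(t)+\frac{t}{\alpha-1}\dot x(t)$ via $\frac{d}{dt}\left(t^{\alpha-1}x(t)\right)=(\alpha-1)t^{\alpha-2}v(t)$, apply Jensen's inequality and the first-order estimates for $v$ to get \emph{(i)}, split the averaging integral to transfer strong convergence for \emph{(ii)}, and invoke Lemma \ref{ext_descent_lemma} for \emph{(iii)}. The only cosmetic difference is that you integrate once from $t_0$ to $t$ against the explicit probability measure $\mu_t$, whereas the paper integrates over $[t,t+h]$ and lets $h\to 0^+$ to show a quantity $\Gamma(t)$ is nonincreasing — the two computations are equivalent.
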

\begin{proof}
According to Theorem \ref{Th1}, the rescaled function $v(t):= z(\tau(t))$ satisfies
	\begin{eqnarray}
	&&
	f(v(t))-\min_{\cH} f=f(z(\tau(t)))-\min_{\cH} f = \mathcal O\left( \frac{1}{\b(\tau(t))}  \right)  \mbox{ as } \; t \to +\infty;\label{contr:fv(t)20}
	\end{eqnarray}
	Our objective  is now to obtain the corresponding convergence rate of the values $f(x(t))-\min_{\cH} f$ as $t \to +\infty$, and the strong convergence of 
 $x(t)$ to $x^*$   the minimum norm element of $S$.

$(i)$ \textit{Convergence rates to zero for the values $f(x(t))-\min_\cH f$}.
By definition of $x(\cdot)$
$$
v(t)=x(t)+\dfrac{t}{\alpha - 1}\dot x(t) =\frac1{(\alpha - 1)t^{\alpha - 2}}\frac{d}{dt}(t^{\alpha - 1}x(t)) .
$$
Equivalently
\begin{equation}\label{eq:Eq-dif1}
\frac{d}{dt}(t^{\alpha - 1}x(t)) = ({\alpha - 1})t^{\alpha - 2}v(t).
\end{equation}
After integration from $t$ to $t+h$ of \eqref{eq:Eq-dif1}  and division by $(t+h)^{\alpha - 1}$, we get
\begin{eqnarray*}
x(t+h) &= & \left(\frac{t}{t+h}\right)^{\alpha - 1}x(t)+ \frac{1}{(t+h)^{\alpha - 1}}\int_t^{t+h}({\alpha - 1})s^{\alpha - 2}v(s)ds\\
	  &= & \left(\frac{t}{t+h}\right)^{\alpha - 1}x(t)+ \left(1-\left(\frac{t}{t+h}\right)^{\alpha - 1}\right)\frac{1}{(t+h)^{\alpha - 1}-t^{\alpha - 1}} \int_t^{t+h}({\alpha - 1})s^{\alpha - 2}v(s)ds\\
	  &= & \left(\frac{t}{t+h}\right)^{\alpha - 1}x(t)+ \left(1-\left(\frac{t}{t+h}\right)^{\alpha - 1}\right)\frac{1}{(t+h)^{\alpha - 1}-t^{\alpha - 1}} \int_{t^{\alpha - 1}}^{(t+h)^{\alpha - 1}} v\left(\theta^{1/(\alpha - 1)}\right)d\theta,
\end{eqnarray*}
where the last equality comes from the change of time variable $\theta=s^{\alpha - 1}$.

\noindent According to the convexity of the   function $F=f - \inf_{\cH}f$, and using  Jensen's inequality\footnote{Jensen's inequality means:  If $\phi : [a, b]\rightarrow {\cH}$ is an integrable
function and $F :  {\cH}\rightarrow  \R$ is a continuous convex function, then
$F\left(\frac1{b-a}\int_a^b\phi(t)dt\right) \leq \frac1{b-a}\int_a^bF(\phi(t))dt$}, we obtain
\begin{small}
\begin{eqnarray*}
F(x(t+h)) & \leq &  \left(\frac{t}{t+h}\right)^{\alpha - 1}F(x(t))+ \left(1-\left(\frac{t}{t+h}\right)^{\alpha - 1}\right)F\left(\frac{1}{(t+h)^{\alpha - 1}-t^{\alpha - 1}} \int_{t^{\alpha - 1}}^{(t+h)^{\alpha - 1}} v\left(\theta^{1/(\alpha - 1)}\right)d\theta \right)\\
	& \leq &  \left(\frac{t}{t+h}\right)^{\alpha - 1}F(x(t))+ \left(1-\left(\frac{t}{t+h}\right)^{\alpha - 1}\right)\frac{1}{(t+h)^{\alpha - 1}-t^{\alpha - 1}}  \int_{t^{\alpha - 1}}^{(t+h)^{\alpha - 1}} F\left( v\left(\theta^{1/(\alpha - 1)}\right)\right)d\theta  .
\end{eqnarray*}
\end{small}
Using again the change of time variable $s=\theta^{1/(\alpha - 1)}$, we get
$$
\int_{t^{\alpha - 1}}^{(t+h)^{\alpha - 1}}  F\left( v\left(\theta^{1/(\alpha - 1)}\right)\right)d\theta   =  \int_t^{t+h}({\alpha - 1})s^{\alpha - 2}F(v(s))ds .
$$
It follows that
\begin{small}
\begin{eqnarray*}
F(x(t+h))  &\leq&  \left(\frac{t}{t+h}\right)^{\alpha - 1}F(x(t))+ \left(1-\left(\frac{t}{t+h}\right)^{\alpha - 1}\right)\frac{1}{(t+h)^{\alpha - 1}-t^{\alpha - 1}}  \int_{t}^{t+h} ({\alpha - 1})s^{\alpha - 2}F(v(s))ds .
\end{eqnarray*}
\end{small}
Using  \eqref{contr:fv(t)20}, with  $\tau(t) = \dfrac{t^2}{2(\alpha - 1)}$, we obtain the existence of a constant $C>0$ and $t_1>t_0$ such that for $t\geq t_1$
$$
F(v(t)) = f(v(t))-\min_{\cH} f\leq  \frac{C}{\b(\tau(t))}   .
$$
Thus, for $t\geq t_1$
\begin{small}
\begin{eqnarray*}
F(x(t+h)) &\leq&   \left(\frac{t}{t+h}\right)^{\alpha - 1}F(x(t))+ \left(1-\left(\frac{t}{t+h}\right)^{\alpha - 1}\right)\frac{C(\alpha - 1)}{(t+h)^{\alpha - 1}-t^{\alpha - 1}}\int_t^{t+h} \displaystyle \frac{s^{\alpha - 2}}{\b(\tau(s))}  ds  . 
\end{eqnarray*}
\end{small}
Set
$$
 \Gamma (t):=t^{\alpha - 1}F(x(t)) - 2^rC(\alpha - 1)^{r+1}\int_{s_0}^{t}  \displaystyle \frac{s^{\alpha - 2}}{\b(\tau(s))}  ds ,
$$
then, for $t\geq t_1$
$$
\frac1h\left(  \Gamma (t+h)- \Gamma (t)\right)\leq 0
$$
By letting $h \rightarrow 0^+$ in the above inequality, we fget  that $\frac{d}{dt}\Gamma (t) \leq 0,$ which  implies $\Gamma (t)$ is nonincreasing. Therefore  for each $0<r<1$ and each $t\geq t_1$, we have $\Gamma (t) \leq \Gamma (t_1)$, which gives
$$
t^{\alpha - 1}F(x(t))  \leq \Gamma (t_1) + 2^rC(\alpha - 1)^{r+1}\int_{t_1}^{t} \displaystyle \frac{s^{\alpha - 2}}{\b(\tau(s))}  ds.
$$ 
Consequently,
\begin{equation}\label{ineq-estim}
f (x(t))- \inf_{\cH}f=F(x(t))\leq  \frac{\Gamma (t_1)}{t^{\alpha - 1}} +  \frac{2^rC(\alpha - 1)^{r+1}}{t^{\alpha - 1}}\int_{t_1}^{t} \displaystyle \frac{s^{\alpha - 2}}{\b(\tau(s))}  ds.
\end{equation}
We conclude that for $t$ large enough
$$
f (x(t))- \inf_{\cH}f= \mathcal O\left( \frac{1}{t^{\alpha - 1}}\left(1+ \displaystyle\int_{t_1}^{t}  \frac{s^{\alpha - 2}}{\b(\tau(s))}  ds \right) \right).
$$ 
As a consequence
\begin{itemize}
\item When $\alpha \geq 3$ we have
$$
f (x(t))- \inf_{\cH}f= \mathcal O\left(  \frac{1}{t^{2r}} \right)
$$
So we can get as close as possible to the optimal convergence rate $1/t^2$.

\item When $\alpha < 3$, by taking $ \frac{\alpha -1}{2}  <r<1$, we get
$$
f (x(t))- \inf_{\cH}f= \mathcal O\left(  \frac{1}{t^{\alpha - 1}} \right).
$$
\end{itemize}
%

\noindent $(ii)$ \textit{ Strong convergence of $x(t)$ to the minimum norm solution}.

Let us go back to \eqref{eq:Eq-dif1}. We have
$$
\frac{d}{ds}(s^{\alpha - 1}x(s)) = ({\alpha - 1})s^{\alpha - 2}v(s).
$$
Integrating from $s_0$ to $t$, we get
\begin{eqnarray*}
x(t) &=& \left(\dfrac{s_0}{t}\right)^{\alpha - 1}x(s_0) + \frac{{\alpha - 1}}{t^{\alpha - 1}}\int_{s_0}^t \theta^{\alpha - 2}v(\theta)d\theta \\
&=&  \left(\dfrac{s_0}{t}\right)^{\alpha - 1}x(s_0) + \frac{{\alpha - 1}}{t^{\alpha - 1}}\int_{s_0}^t \theta^{\alpha - 2}(v(\theta)-x^*)d\theta + \left(\frac{{\alpha - 1}}{t^{\alpha - 1}}\int_{s_0}^t \theta^{\alpha - 2}d\theta\right) x^*\\
&=&  \left(\dfrac{s_0}{t}\right)^{\alpha - 1}x(s_0) + \frac{{\alpha - 1}}{t^{\alpha - 1}}\int_{s_0}^t \theta^{\alpha - 2}(v(\theta)-x^*)d\theta + \left(1-\left(\frac{s_0}{t}\right)^{\alpha - 1}\right) x^*\\
\end{eqnarray*}
Therefore,
\begin{eqnarray}\label{eq_cont-us2}
\|x(t) - x^*\| &\leq&   \left(\dfrac{s_0}{t}\right)^{\alpha - 1}|x(s_0) - x^*\| + \frac{{\alpha - 1}}{t^{\alpha - 1}}\int_{s_0}^t \theta^{\alpha - 2}\|v(\theta)-x^*\|d\theta.
\end{eqnarray}
By Theorem \ref{thm:model-a}, we have, as $\theta$ goes to $+\infty$,    strong convergence of $v(\theta)=z(\tau(\theta))$ to the minimum norm solution $x^*$. \\
Since $\lim_{s\rightarrow+\infty}\|v(\theta)-x^*\|=0$, then given $a > 0$, there exists $s_a>0$ sufficiently large so that $\|v(\theta)-x^*\|<a$, for $\theta \geq s_a$.

Then, for $t> s_a$, split the integral $\int_{s_0}^t \theta^{\alpha - 2}\|v(\theta)-x^*\|d\theta$ into two parts to obtain
\begin{eqnarray*}
\frac{\alpha - 1}{t^{\alpha - 1}}\int_{s_0}^t \theta^{\alpha - 2}\|v(\theta)-x^*\|d\theta &=& \frac{\alpha - 1}{t^{\alpha - 1}}\int_{s_0}^{s_a} \theta^{\alpha - 2}\|v(\theta)-x^*\|d\theta + \frac{\alpha - 1}{s^{\alpha - 1}}\int_{s_a}^t \theta^{\alpha - 2}\|v(\theta)-x^*\|d\theta\\
&\leq& \frac{\alpha - 1}{t^{\alpha - 1}}\int_{s_0}^{s_a} \theta^{\alpha - 2}\|v(\theta)-x^*\|d\theta + \frac{a(\alpha - 1)}{t^{\alpha - 1}}\int_{s_a}^t \theta^{\alpha - 2}d\theta\\
&=& a + \frac{1}{t^{\alpha - 1}} \left((\alpha - 1)\int_{s_0}^{s_a} \theta^{\alpha - 2}\|v(\theta)-x^*\|d\theta - as_a^{\alpha - 1}\right).
\end{eqnarray*}
Now let $t\rightarrow+\infty$ to deduce that
$$
\limsup_{t\rightarrow+\infty}\frac{\alpha - 1}{t^{\alpha - 1}}\int_{s_0}^t \theta^{\alpha - 2}\|v(\theta)-x^*\|d\theta \leq a.
$$
Since this is true for any $a > 0$, this insures 
$$
\lim_{t\rightarrow+\infty}\frac{\alpha - 1}{t^{\alpha - 1}}\int_{s_0}^t \theta^{\alpha - 2}\|v(\theta)-x^*\|d\theta=0.
$$
Going back to \eqref{eq_cont-us2}, we conclude for $\alpha - 1>0$ that $\lim_{t\rightarrow+\infty}\|x(t) - x^*\|=0$. This means that $x(t)$ strongly  converges to $x^*$ as $t\rightarrow +\infty$.

\medskip

$iii)$\textit{ Convergence of the gradients towards zero.} We have obtained in $ii)$ that the trajectory $x(\cdot)$ converges. Therefore, it is bounded. Let $L>0$ be the Lipschitz constant of   $\nabla f$ on a ball that contains the trajectory  $x(\cdot)$.   According to the convergence rate of values  and Lemma \ref{ext_descent_lemma} in the Appendix, we immediately obtain the following  convergence rate of the gradients towards zero
\vspace{-7pt}
\begin{equation*}
	\dfrac{1}{2L} \left\lVert \nabla f(x(t)) \right\rVert ^{2} \leq f(x(t)) -\min_{\cH} f =\mathcal O\left( \frac{1}{t^{\alpha - 1}} + \frac{1}{t^{2r}} \right).
\end{equation*}	
This completes the proof.
\end{proof}

}\fi

\section{Example for comparison of the convergence rate}
\begin{example}\label{exemple1} 
Take $f  :  ]-1,+\infty[^{2} \to \R$ which is defined by 
$$
f(x)=\frac12(x_1+x_2)^2-\ln(x_1+1)(x_2+1).
$$
 The function $f_1$ is strictly convex with gradient
$$
\nabla f(x)=\begin{bmatrix}
x_1+x_2-\frac1{x_1+1}\\
x_1+x_2-\frac1{x_2+1}
\end{bmatrix} \text{ and Hessian } \nabla^2 f(x)=\begin{bmatrix}1+\frac1{(x_1+1)^2} & 1\\1 & 1+\frac1{(x_2+1)^2}\end{bmatrix} 
.
$$
The unique minimizer of $f$ is $x^*= \left((\sqrt {3}-1)/2,(\sqrt {3}-1)/2\right)$. The corresponding trajectories to the system \eqref{systeme continue} are depicted in Figure \ref{fig:trigs-c}. We note that the convergence rates of the values in this numerical test are consistent with those predicted in Corollaries \ref{CorCont1} and \ref{CorCont2}, while the convergence rates of the gradients are clearly stronger than those predicted theoretically. 
Let us consider the times-second-order systems, treated in the very recent papers: 
\begin{eqnarray*}
\text{(TRAL)}&&\dot{x}(t)+2t^2\ln^2t\nabla f\left(x(t)\right) + 5x(t)=0,\\
\text{(TRAE)}&&\dot{x}(t)+2t^2e^{2t^{9/10}}\nabla f\left(x(t)\right) + 5x(t)=0,\\
\text{(TRISAL)}&&\ddot{x}(t)+5\dot{x}(t)+2t^2\ln^2t\nabla f\left(x(t)\right) + 5x(t)=0,\text{ see \cite{BCR1}},\\
\text{(TRISAE)}&&\ddot{x}(t)+5\dot{x}(t)+2t^2e^{2t^{4/5}}\nabla f\left(x(t)\right) + 5x(t)=0,\text{ see \cite{BCR1}},\\
\text{(TRISG)}&&\ddot{x}(t)+5t^{-4/5}\dot{x}(t)+\nabla f\left(x(t)\right) +t^{-8/5}x(t)=0,\text{ see  \cite{ABCR-JDE}}, \\
\text{(TRISH)}&&\ddot{x}(t)+5t^{-4/5}\dot{x}(t)+\nabla f\left(x(t)\right) + 2\nabla^2 f\left(x(t)\right)\dot{x}(t) + t^{-8/5}x(t)=0, \text{ see  \cite{ABCR-JDE}}.
\end{eqnarray*}
By comparing  the two times-first-order systems, \eqref{systeme continue} where $\beta (t)$ is either equal to $2t^2\ln^2t$ or $2t^2e^{2t^{9/10}}$, with those of second order, we are surprised (see Figure \ref{fig:trigs-d}) by the better rate of convergence brought by these two reduced and inexpensive systems.
\begin{figure} 
 \includegraphics[scale=0.35]{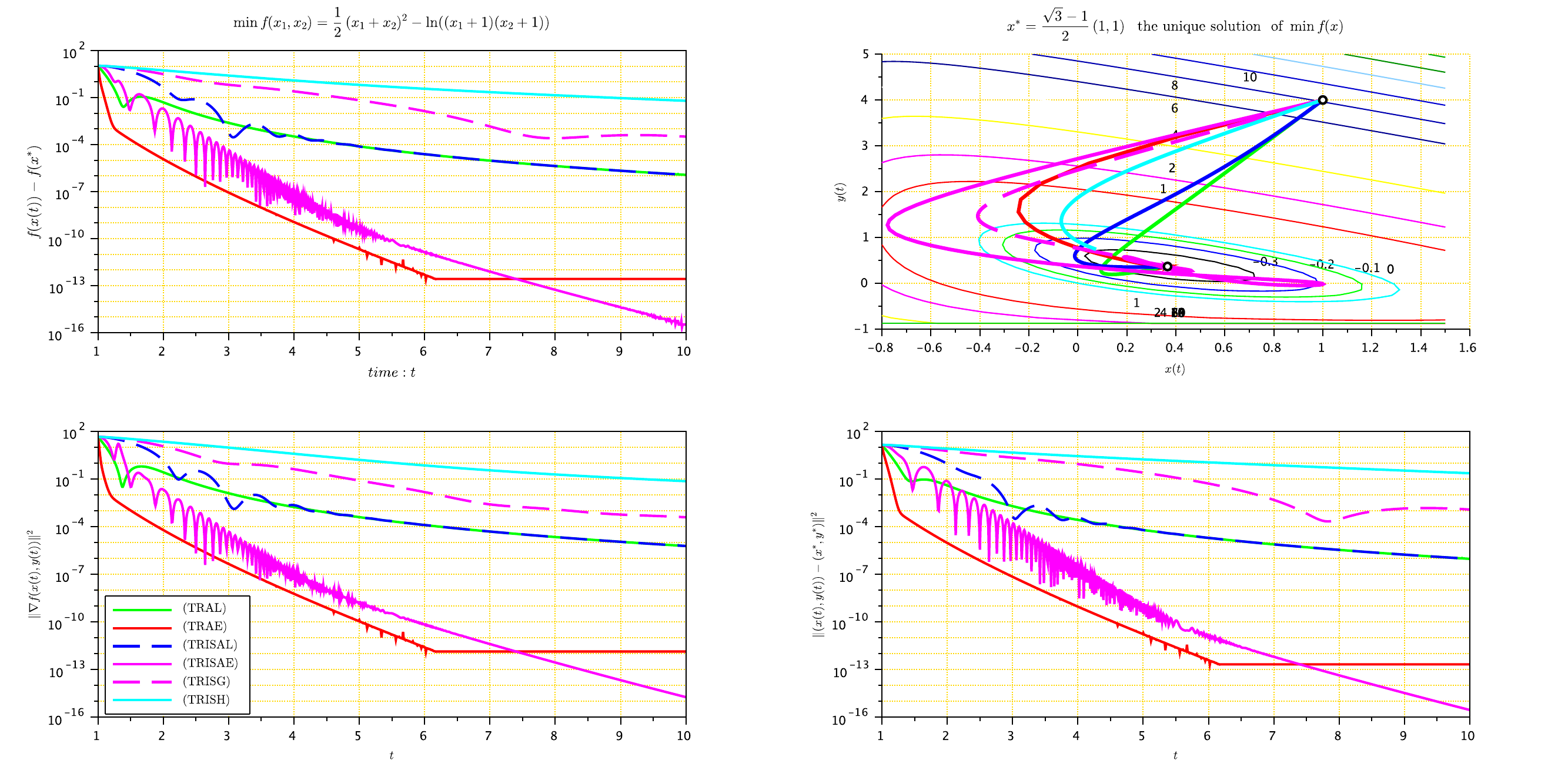}
  \caption{Convergence rates of
values, trajectories  and gradients.}
 \label{fig:trigs-d} 
\end{figure}
\end{example}

%

\section{Implicit discretization for nonsmooth convex functions}

Here, we suppose $f:\,\cH\rightarrow \mathbb R\cup\{ +\infty\}$ to  be a  proper lower semicontinuous convex function. Let us recall that an implicit discretization of the dynamical system \eqref{systeme continue} with $\beta(k)=\dfrac{\b_k}{d}>0$ and $d=1-c>0$ leads to the numerical scheme  \eqref{proximal algorithm} that reads for every $k \geq k_0$
\begin{eqnarray}
&&x_{k+1}-x_k+\dfrac{\b_k}{d}  \p \varphi_k(x_{k+1}) = \frac1d\left( x_{k+1} +\b_k\p f(x_{k+1})- dx_k\right)\ni 0		\label{alg_xk}\\
&& \iff \;x_{k+1}= \text{prox}_{\frac{\b_k}{d} \varphi_k}( x_k) =\text{prox}_{\b_k f}(d x_k) ,\nonumber
\end{eqnarray}
where $ \varphi _k:=f+ \frac{1-d}{2\b_k}\norm{\cdot} ^2 $.
We suppose the following condition:
\begin{equation*}
\tag*{$(\mathbf{H}_{\beta_k})$} 
  (\b_k)\textit{ is a nondecreasing  and satisfies }\, \underset{k\rightarrow +\infty}{\lim}\, \b_k= + \infty .
\end{equation*}

For each $k\geq k_0$, we denote by $y_{k}$ the unique minimizer of the strongly convex function
$ \varphi _k $, which means by the first order convex optimality condition
\begin{equation}\label{nn}
\p f (y_{k})+\dfrac{1-d}{\b_k} y_{k}\ni 0\; \iff \; y_k=\text{prox}_{\frac{1-d}{\b_k} f}(0).
\end{equation}

Let us recall (see \cite[Prop 2.6]{Brezis} for general maximal monotone operators) that the Tikhonov approximation curve $ k \mapsto y_{k}$ satisfies
\begin{equation}\label{x^*}
	\forall k\geq k_0, \;\; \norm{y_{k}}\leq \norm {x^*}
\hbox{ and }	\underset{k\rightarrow +\infty}{\lim} \norm{y_{k}-x^*} = 0.
\end{equation}
Now, for $\l$ a positive constant, we introduce the following  discrete energy function
\begin{equation}\label{E_k}
E_k = \b _k \left( \varphi_k (x_k)- \varphi_k (y_{k})  \right)+ \frac{\l}{2}\norm {x_k-y_{k-1}}^2, \;\; \text{for all }\,k\geq k_0.
\end{equation}
Before giving  this section's principal theorem, we need two very important lemmas. The first lemma relates the asymptotic behavior of the sequence $E_k$ to the convergence rate of values and iterations, the second  lemma provides a few properties of the viscosity curve $( y_{k})_k.$
\begin{lemma}\label{Lemme 1 discr}
Let $(x_k)$ be the sequence generated by the algorithm  \eqref{proximal algorithm}. Then for any $k \geq k_0$ we have:  
 \begin{equation}\label{con- val disr}
f(x_k)-\underset{\mathcal{H}}{\min} \,f \leq \dfrac{ E_k}{\b_k} + \dfrac{1-d}{2\b_k} \norm{x^*}^2
\end{equation}
and
\begin{equation}\label{conv-tra discr}
\norm{x_k-y_{k}}^2 \leq \dfrac{2E_k }{1-d}.
\end{equation}
Therefore, $x_k$ converges strongly to $x^*$ as soon as $\underset{k\rightarrow +\infty}{\lim} E_k = 0$.
\end{lemma}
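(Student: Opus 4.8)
The plan is to establish the two estimates directly from the definitions of $E_k$, $\varphi_k$ and $y_k$, and then to combine the second estimate with the approximation property \eqref{x^*}. Two structural facts carry the whole argument: first, $y_k$ is the global minimizer of $\varphi_k$, so $\varphi_k(y_k)\le \varphi_k(z)$ for every $z\in\cH$; second, $\varphi_k=f+\frac{1-d}{2\b_k}\norm{\cdot}^2$ is $\frac{1-d}{\b_k}$-strongly convex, coming from its quadratic part. I would point out at the outset that neither inequality actually uses the recursion $x_{k+1}=\text{prox}_{\b_k f}(dx_k)$; only the shape of the energy $E_k$ matters here, the role of the algorithm being deferred to the later analysis that forces $E_k\to 0$.

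For \eqref{con- val disr} I would start from the elementary identity $f(z)=\varphi_k(z)-\frac{1-d}{2\b_k}\norm{z}^2$, applied at $z=x_k$ and at $z=x^*$ (recall $\min_\cH f=f(x^*)$), which gives
$$f(x_k)-\min_\cH f=\big(\varphi_k(x_k)-\varphi_k(x^*)\big)+\frac{1-d}{2\b_k}\big(\norm{x^*}^2-\norm{x_k}^2\big).$$
Then I would insert $y_k$ by writing $\varphi_k(x_k)-\varphi_k(x^*)=[\varphi_k(x_k)-\varphi_k(y_k)]+[\varphi_k(y_k)-\varphi_k(x^*)]$. By minimality the bracket $\varphi_k(y_k)-\varphi_k(x^*)$ is $\le 0$, and the term $-\frac{1-d}{2\b_k}\norm{x_k}^2$ is $\le 0$; discarding both leaves $f(x_k)-\min_\cH f\le[\varphi_k(x_k)-\varphi_k(y_k)]+\frac{1-d}{2\b_k}\norm{x^*}^2$. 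Since the summand $\frac{\l}{2}\norm{x_k-y_{k-1}}^2$ of $E_k$ is nonnegative, one has $\varphi_k(x_k)-\varphi_k(y_k)\le E_k/\b_k$, which produces exactly \eqref{con- val disr}.

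For \eqref{conv-tra discr} I would use the strong-convexity estimate at the minimizer: because $0\in\p\varphi_k(y_k)$, strong convexity yields
$$\varphi_k(x_k)-\varphi_k(y_k)\ge\frac{1-d}{2\b_k}\norm{x_k-y_k}^2.$$
Multiplying by $\b_k$ and using once more $\b_k\big(\varphi_k(x_k)-\varphi_k(y_k)\big)\le E_k$ gives $\frac{1-d}{2}\norm{x_k-y_k}^2\le E_k$, i.e. \eqref{conv-tra discr}. Finally, for the strong convergence statement, suppose $E_k\to 0$: then \eqref{conv-tra discr} forces $\norm{x_k-y_k}\to 0$, while \eqref{x^*} gives $\norm{y_k-x^*}\to 0$, so the triangle inequality $\norm{x_k-x^*}\le\norm{x_k-y_k}+\norm{y_k-x^*}$ yields $x_k\to x^*$ strongly. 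The argument is routine; the only point needing a little care is the bookkeeping that keeps every discarded term nonpositive, so that the direction of each inequality is preserved throughout.
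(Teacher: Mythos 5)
Your proof is correct and follows essentially the same route as the paper's: the same decomposition through $\varphi_k(y_k)-\varphi_k(x^*)\le 0$, the same strong-convexity bound at the minimizer $y_k$, and the same triangle-inequality conclusion via \eqref{x^*}. Your observation that the recursion itself is not used at this stage is accurate but does not change the argument.
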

\begin{proof}
 Observing  the definition of $\varphi _k$, one has
\begin{eqnarray}
f(x_k)-\underset{\mathcal{H}}{\min} \,f &=& \varphi _k (x_k) - \dfrac{1-d}{2\b_k}\norm{x_k}^2 -\varphi _k (x^*) + \dfrac{1-d}{2\beta _k}\norm{x^*}^2 		\nonumber\\
&=&  \left[  \varphi _k (x_k)-  \varphi _k (x_{\b_k})  \right] +\left[ \underbrace{\varphi _k (y_{k})- \varphi _k (x^*)}_{ \leq 0}\right] 		\nonumber\\
&&+\dfrac{1-d}{2\beta _k}\norm{x^*}^2  \underbrace{-\dfrac{1-d}{2\beta _k}\norm{x_k}^2}_{\leq 0}  		\nonumber\\
&\leq &  \varphi _k (x_k)-  \varphi _k (y_{k}) +\dfrac{1-d}{2\beta _k}\norm{x^*}^2 . \label{eq21}
\end{eqnarray}
From definition of $E_k$, we obtain
\begin{equation}\label{3d}
 \varphi _k (x_k)-  \varphi _k (y_{k}) \leq \dfrac{ E_k}{\beta _k},
\end{equation}
which, combined with \eqref{eq21}, gives \eqref{con- val disr}.\\
 Using the strong convexity of $ \varphi _k$, and $y_{k}:=\underset{\mathcal{H}}{\argmin }\, \varphi _k$, we obtain
\begin{equation*}
  \varphi _k (x_k)-  \varphi _k (y_{k})\geq \dfrac{1-d}{2\beta _k} \norm{x_k -y_{k}}^2 .
  \end{equation*}
By combining the last inequality  with \eqref{3d}, we get
\begin{equation*}
 \dfrac{ E_k}{\beta _k} \geq  \dfrac{1-d}{2\beta _k} \norm{x_k-y_{k}}^2 ,
 \end{equation*}
which implies the  inequality \eqref{conv-tra discr}.
Recall that $\underset{k\rightarrow +\infty}{\lim} \norm{y_{k}-x^*} = 0$, so according to  $\underset{k\rightarrow +\infty}{\lim} E_k = 0$, we deduce that the sequence $(x_k )$ converges strongly to $x^*$. 
\end{proof}
\begin{lemma}\label{lemma 2 discr}
For every $k\geq k_0$, the following properties are satisfied:
\begin{itemize}
\item[i)] $\varphi _k(y_{k}) - \varphi _{k+1}(y_{k+1})\leq \dfrac{(1-d)}{2}\left( \dfrac{1}{\b_k} - \dfrac{1}{\b_{k+1}}\right)\norm {y_{k+1}}^2 ,$
\item[ii)] $\norm {y_{k+1}-y_{k}}^2 \leq \dfrac{\b_{k+1}-\b_k}{\b_{k+1}}\< y_{k+1} , y_{k+1}-y_{k} \>$,  and 
\item[iii)]  
$ \norm {y_{k+1}-y_{k}} \leq  \dfrac{\b_{k+1}-\b_k}{\b_{k+1}} \norm {y_{k+1}} .$
\end{itemize}
\end{lemma}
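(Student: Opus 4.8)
The plan is to establish the three items in order, each as a consequence of the characterization of $y_{k}$ as the unique minimizer of the strongly convex function $\varphi_k = f + \frac{1-d}{2\b_k}\norm{\cdot}^2$, together with the first-order optimality condition \eqref{nn}. Throughout I use that $(\b_k)$ is nondecreasing (so $\b_{k+1}\geq \b_k$ and all factors $\frac{\b_{k+1}-\b_k}{\b_{k+1}}$ are nonnegative) and that $1-d>0$ since $d\in\,]0,1[$.

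For item i) I would exploit that $y_{k}$ minimizes $\varphi_k$, so $\varphi_k(y_k)\leq \varphi_k(y_{k+1})$, which reduces the claim to bounding $\varphi_k(y_{k+1})-\varphi_{k+1}(y_{k+1})$. Since $\varphi_k$ and $\varphi_{k+1}$ differ only through the Tikhonov coefficient, the $f$-terms cancel and this difference equals exactly $\frac{1-d}{2}\left(\frac{1}{\b_k}-\frac{1}{\b_{k+1}}\right)\norm{y_{k+1}}^2$, giving the stated inequality with no further computation.

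For item ii) I would write the optimality condition \eqref{nn} as $\xi_k:=-\frac{1-d}{\b_k}y_k\in\partial f(y_k)$ and $\xi_{k+1}:=-\frac{1-d}{\b_{k+1}}y_{k+1}\in\partial f(y_{k+1})$. Monotonicity of $\partial f$ then yields $\langle \xi_{k+1}-\xi_k,\,y_{k+1}-y_k\rangle\geq 0$, and dividing by $1-d>0$ gives
\begin{equation*}
\left\langle \tfrac{1}{\b_k}y_k-\tfrac{1}{\b_{k+1}}y_{k+1},\; y_{k+1}-y_k\right\rangle \geq 0.
\end{equation*}
The key algebraic step is to substitute $y_k=y_{k+1}-(y_{k+1}-y_k)$ in the first slot; expanding produces the term $-\frac{1}{\b_k}\norm{y_{k+1}-y_k}^2$ together with $\left(\frac{1}{\b_k}-\frac{1}{\b_{k+1}}\right)\langle y_{k+1},\,y_{k+1}-y_k\rangle$. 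Rearranging and multiplying through by $\b_k$ gives item ii) after simplifying $\b_k\left(\frac{1}{\b_k}-\frac{1}{\b_{k+1}}\right)=\frac{\b_{k+1}-\b_k}{\b_{k+1}}$. This sign bookkeeping is the only mildly delicate point in the whole argument.

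Item iii) is then immediate: applying the Cauchy--Schwarz inequality $\langle y_{k+1},\,y_{k+1}-y_k\rangle\leq \norm{y_{k+1}}\,\norm{y_{k+1}-y_k}$ to the right-hand side of ii) and dividing by $\norm{y_{k+1}-y_k}$ yields the claim (the case $y_{k+1}=y_k$ being trivial). Everything here flows directly from convexity and monotonicity, so I expect no substantive obstacle beyond the substitution in item ii).
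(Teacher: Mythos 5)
Your proposal is correct and follows essentially the same route as the paper: item i) from the minimality of $y_k$ for $\varphi_k$ plus the exact cancellation of the $f$-terms, item ii) from the optimality condition \eqref{nn} and monotonicity of $\partial f$ followed by the substitution $y_k=y_{k+1}-(y_{k+1}-y_k)$, and item iii) by Cauchy--Schwarz. No gaps.
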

\begin{proof}
{\it i)} Recall that $y_{k}$ is the unique minimizer of the  function
$ \varphi _k=f+ \frac{1-d}{2\b_k}\norm{\cdot} ^2 $, then 
\begin{equation*}
\varphi _k (y_{k}) \leq \varphi _k (y_{k+1}) =f(y_{k+1})+\frac{1-d}{2\b_k}\norm{y_{k+1}}^2 ,  
\end{equation*}
and also 
\begin{equation*}
-\varphi _{k+1} (y_{k+1})=-f(y_{k+1})-\frac{1-d}{2\b_{k+1}}\norm{y_{k+1}}^2.
\end{equation*}
Summing these two inequalities, we get the first statement of this lemma.

{\it ii)} By \eqref{nn}, we have
\begin{equation*}
\frac{d-1}{\b_k} y_{k}\in \p f(y_{k})\;\;\;\; \text{and}\;\;\;\; \frac{d-1}{\b_{k+1}} y_{k+1}\in\p  f(y_{k+1}).
\end{equation*}
The monotonicity of $\p f$ gives
\begin{equation*}
\left\< \frac{1-d}{\b_k} y_{k}- \frac{1-d}{\b_{k+1}} y_{k+1} , y_{k+1}-y_{k} \right\> \geq 0 ,
\end{equation*}
it follows that
\begin{equation*}
\frac{1-d}{\b_k}\norm{y_{k+1}-y_k}^2\leq \left( \dfrac{1-d}{\b_k} - \dfrac{1-d}{\b_{k+1}}\right) \< y_{k+1} , y_{k+1}-y_{k} \>  ,
\end{equation*}
which gives the second statement of this lemma. The last statement follows from Cauchy-Schwarz inequality.
\end{proof}

By adding the following hypothesis on $(\b_k)$:
\begin{equation*}\label{H02}
\tag*{$(\mathbf{H}_{\b_k})$}\left \{
\begin{array}{ll}
(i) & \dot{\b}_k := \b_{k+1}-\b_k  \neq 0, \text{ for } k \text{ large enough};\\
(ii)  &\underset{k\rightarrow +\infty}{\lim}\, \dfrac{\dot{\b} _{k+1}}{\dot{\b}_k} =\underset{k\rightarrow +\infty}{\lim}\, \dfrac{\b _{k+1}}{\b_k} =\ell >0;
\end{array}
\right.
\end{equation*}
let us show the main result of this section.
\begin{theorem}\label{Th 2 disc}
Let   $f: \mathcal{H} \rightarrow \mathbb{R}\cup\{ +\infty\}$ be a  proper lower semicontinuous convex function, with $\text{argmin}_{\mathcal{H}}f\neq\emptyset$, and $(x_k)$ be a sequence generated by the algorithm \eqref{proximal algorithm}. Let $\r\in]0,1-d[$ and suppose the condition \ref{H02}. 
Then, we have
\begin{itemize}
\item[\textbf{i)}]  for $k$ large enough  
\begin{eqnarray*}
&&f(x_k)-\underset{\mathcal{H}}{\min}f = \mathcal{O} \left( \dfrac{1}{ \beta _k} \right), \;\norm{\dot x_k }^2   = \mathcal{O} \left(    \dfrac{\dot{\b}_k}{\b_k}+e^{-\r k} \right) \\
&& \text{ and }\norm{ x_k -  y_{k} }^2   = \mathcal{O} \left(    \dfrac{\dot{\b}_k}{\b_k}+e^{-\r k} \right) ;
\end{eqnarray*}
\item[\textbf{ii)}]   if $\ell=1$ in \ref{H02}, the sequence $(x_k)$ generated by the algorithm \eqref{proximal algorithm} converges strongly to $x^*$  the element of minimum norm of $\text{argmin}_{\mathcal{H}}f$;
\item[\textbf{iii)}]   for $k$ large enough
$$
d(0,\partial f(x_{k+1}))= \mathcal{O} \left(    \dfrac{1}{\b_k}\right), 
$$
and then if $f$ is differentiable on $\mathcal{H}$,   we have strong convergence of the gradients to zero with the rate
\begin{equation}\label{eq:grad35}
\norm{ \nabla f(x_k)}  =  \mathcal{O} \left(    \dfrac{1}{\b_k}\right).
\end{equation}
\end{itemize}
\end{theorem}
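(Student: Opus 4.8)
The plan is to transcribe the Lyapunov argument of Theorem \ref{Th1} into discrete form, so that everything reduces to a decay estimate on the energy $E_k$ of \eqref{E_k}. Indeed, Lemma \ref{Lemme 1 discr} already does the bookkeeping: by \eqref{con- val disr} a mere boundedness of $E_k$ yields $f(x_k)-\min_{\cH}f=\mathcal{O}(1/\b_k)$, while \eqref{conv-tra discr} turns any bound $E_k=\mathcal{O}(\dot\b_k/\b_k+e^{-\r k})$ into the stated rate for $\norm{x_k-y_k}^2$; and when $\ell=1$ in \ref{H02} one has $\dot\b_k/\b_k\to0$, hence $E_k\to0$, and the last assertion of Lemma \ref{Lemme 1 discr} gives the strong convergence $x_k\to x^*$ of part \textbf{ii)}. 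Thus parts \textbf{i)} and \textbf{ii)} rest entirely on a one-step recursion $E_{k+1}\le(1-\r)E_k+C\,\dot\b_k/\b_k$ valid for $k$ large, with $1-\r$ any prescribed number in $]d,1[$ (equivalently $\r\in]0,1-d[$) and $C$ proportional to $\norm{x^*}^2$.

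Deriving this recursion is the heart of the matter and the discrete counterpart of the computation of $\frac{d}{dt}[e^{\mu t}E(t)]$. The decisive ingredient, playing the role of the $-cA$ term produced in the continuous case by differentiating $B$, is an \emph{implicit contraction}: since by \eqref{alg_xk} there is $g\in\p\varphi_k(x_{k+1})$ with $x_{k+1}-x_k=-\frac{\b_k}{d}g$, the $\frac{1-d}{\b_k}$-strong convexity of $\varphi_k$ together with $\varphi_k(x_{k+1})\ge\varphi_k(y_{k})$ gives $\langle g,x_{k+1}-y_k\rangle\ge\frac{1-d}{2\b_k}\norm{x_{k+1}-y_k}^2$, and the polarization identity then yields
\[
\norm{x_{k+1}-y_k}^2\le d\,\norm{x_k-y_k}^2-d\,\norm{x_{k+1}-x_k}^2 .
\]
For the $A$-part I would bound $\varphi_{k+1}(x_{k+1})-\varphi_{k+1}(y_{k+1})$ by combining the three-point prox inequality at $\xi=y_k$, the pointwise monotonicity $\varphi_{k+1}\le\varphi_k$ (as $(\b_k)$ is nondecreasing), and Lemma \ref{lemma 2 discr}(i) for the viscosity increment; after multiplication by $\b_{k+1}$, the negative term $-\frac{\b_{k+1}}{2\b_k}\norm{x_{k+1}-y_k}^2$ so generated absorbs $B_{k+1}=\frac{\lambda}{2}\norm{x_{k+1}-y_k}^2$ once $\lambda\le\liminf_k\b_{k+1}/\b_k$. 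Replacing $\norm{x_k-y_k}^2$ by $B_k$ through the triangle inequality and the bound $\norm{y_k-y_{k-1}}\le\frac{\dot\b_{k-1}}{\b_k}\norm{x^*}$ from Lemma \ref{lemma 2 discr}(iii), and majorizing all remaining $y$-remainders by $C\,\dot\b_k/\b_k\,\norm{x^*}^2$ via $\norm{y_k}\le\norm{x^*}$ from \eqref{x^*}, produces the announced contraction with factor close to $d$; the term $-\frac{d\b_{k+1}}{2\b_k}\norm{x_{k+1}-x_k}^2$ is retained so that, once the energy is controlled, it furnishes $\norm{\dot x_k}^2=\mathcal{O}(\dot\b_k/\b_k+e^{-\r k})$.

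Granting $E_{k+1}\le(1-\r)E_k+C\,\dot\b_k/\b_k$, I would iterate to get $E_k\le(1-\r)^{k-k_1}E_{k_1}+C\sum_{j=k_1}^{k-1}(1-\r)^{k-1-j}\dot\b_j/\b_j$. The homogeneous part gives the $e^{-\r k}$ contribution, and the sum is shown to be $\mathcal{O}(\dot\b_k/\b_k)$ by a discrete L'Hôpital/Stolz argument: \ref{H02}(ii) yields $\frac{\dot\b_{k+1}/\b_{k+1}}{\dot\b_k/\b_k}=\frac{\dot\b_{k+1}}{\dot\b_k}\cdot\frac{\b_k}{\b_{k+1}}\to1$, so the weights $\dot\b_j/\b_j$ are asymptotically geometric of ratio $1<1/(1-\r)$, whence $\sum_j(1-\r)^{k-1-j}\dot\b_j/\b_j$ is comparable to $\dot\b_k/\b_k$. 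This is precisely the discrete replacement of the $\limsup$ condition \ref{H0}(iii). I expect this summation step, together with the careful tracking of constants in the recursion (notably the joint choice of $\lambda$ and $\r$ making the factor lie in $]d,1[$ for every $d\in]0,1[$), to be the main obstacle.

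Part \textbf{iii)} follows directly from the optimality condition in \eqref{alg_xk}: from $\frac{d}{\b_k}(x_k-x_{k+1})\in\p\varphi_k(x_{k+1})=\p f(x_{k+1})+\frac{1-d}{\b_k}x_{k+1}$ one gets $\frac{d}{\b_k}(x_k-x_{k+1})-\frac{1-d}{\b_k}x_{k+1}\in\p f(x_{k+1})$, hence
\[
d\bigl(0,\p f(x_{k+1})\bigr)\le\frac{d}{\b_k}\norm{x_k-x_{k+1}}+\frac{1-d}{\b_k}\norm{x_{k+1}} .
\]
Since $(x_k)$ is bounded (being convergent, or directly from the boundedness of $E_k$ via \eqref{con- val disr}) and $\norm{x_k-x_{k+1}}\to0$, both terms are $\mathcal{O}(1/\b_k)$, giving the subdifferential rate; when $f$ is differentiable $\p f=\{\nabla f\}$, and reindexing with $\b_{k+1}/\b_k\to\ell>0$ delivers \eqref{eq:grad35}.
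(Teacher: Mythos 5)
Your proposal is correct and follows essentially the same route as the paper: the same energy $E_k$, the same reduction via Lemmas \ref{Lemme 1 discr} and \ref{lemma 2 discr}, the same one-step contraction $E_{k+1}\le(1-\r)E_k+C\,\dot\b_k/\b_k$ (the paper's \eqref{14d} with $\m=\r/(1-\r)$), the same asymptotically geometric summation based on \ref{H02}(ii), and the identical optimality-condition argument for part \textbf{iii)}. The only difference is in the algebraic packaging of the recursion: you obtain it from the three-point prox inequality plus a polarization identity, whereas the paper uses the strong-convexity subgradient inequality at $x_{k+1}$ together with explicit choices of the parameters $b$ and $\l$ to cancel the coefficients $A$ and $B$ in \eqref{eq33} — the two computations draw on the same strong-convexity input and are equivalent.
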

\begin{proof}
To simplify the writing of the proof, we use the following notations: $v_k:=x_k -y_{k-1}$ and
for any sequence $(u_k)$ in $\mathcal{H}$, we write 
$ \dot{u}_k:=u_{k+1}-u_k 
.$\\
From \eqref{E_k}, we have
\begin{equation*}
\begin{array}{lll}
\dot{E}_{k} &\leq& \b _{k+1 } \left( \varphi_{k+1 } (x_{k+1 })- \varphi_{k+1 } (y_{k+1 })  \right)- \b _k \left( \varphi_k (x_k)- \varphi_k (y_{k})  \right)\\\\
&\, &+ \frac{\l}{2}\left( \norm {v_{k+1}}^2-\norm {v_k}^2\right)\\\\
&=& \b_k \left[ \varphi_{k} (x_{k+1 })- \varphi_{k } (x_{k })  \right]+\dot{\b}_k \left[ \varphi_{k} (x_{k+1 })- \varphi_{k } (y_k)  \right]\\\\
&\;&+\b_{k+1} \left[ \varphi_{k+1} (x_{k+1})-\varphi_{k} (x_{k+1})+\varphi_k (y_{k}) -\varphi_{k+1} (y_{k+1}  ) \right]\\\\
&\,&+ \frac{\l}{2}\left( \norm {v_{k+1}}^2-\norm {v_k}^2\right).
\end{array}
\end{equation*}
Using the definition of $\varphi_k$ and that $(\b_k)$ is nondecreasing, we deduce that
\begin{equation}\label{5}
\varphi_{k+1} (x_{k+1 })- \varphi_{k } (x_{k+1 })=\dfrac{1-d}{2}\left( \dfrac{1}{\b_{k+1}}-\dfrac{1}{\b_{k}} \right)\norm{x_{k+1}}^2\leq 0 ; 
\end{equation}
so, according to Lemma \ref{lemma 2 discr} $i)$, we obtain
\begin{equation*}
\begin{array}{lll}
\dot{E}_{k} &\leq& \b_k \left[ \varphi_{k} (x_{k+1 })- \varphi_{k } (x_{k })  \right]+\dot{\b} _k\left[ \varphi_{k} (x_{k+1})-\varphi_{k} (y_k)   \right] \\\\
&\;& +\dfrac{(1-d)}{2}\dfrac{\dot{\b}_k}{\b_k} \norm{y_{k+1}}^2 +\dfrac{\l}{2}\left( \norm {v_{k+1}}^2-\norm {v_k}^2\right).
\end{array}
\end{equation*}
Noting that $ \frac{\l}{2}\left( \norm {v_{k+1 }}^2-\norm {v_k}^2\right) \leq \l \< \dot{v}_k , v_{k+1} \>$ then
\begin{equation}\label{6d}
\begin{array}{lll}
\dot{E}_{k} &\leq& \b_k \left[ \varphi_{k} (x_{k+1 })- \varphi_{k } (x_{k })  \right]+\dot{\b}_{k} \left[ \varphi_{k} (x_{k+1})-\varphi_{k} (y_k)   \right] \\\\
&\;& +\dfrac{(1-d)\dot{\b}_k}{2\b_k} \norm{y_{k+1}}^2 +\l \< \dot{v}_k , v_{k+1} \>.
\end{array}
\end{equation}
Returning to form \eqref{disc syst } of algorithm \eqref{proximal algorithm},  we obtain
\begin{equation*}
\begin{array}{lll}
\dot{v}_{k}&=&v_{k+1}-v_{k} =  x_{k+1}-y_{k}- x_k+y_{k-1} = \dot{x}_k - \dot{y}_{k-1}.
\end{array}
\end{equation*}
Thus
\begin{equation}\label{7d}
\l \< \dot{v}_{k} , v_{k+1}\> =\l  \<\dot x_{k}, x_{k+1}-y_{k} \>-\l \<\dot{y}_{k-1} , x_{k+1}-y_{k} \> .
\end{equation}
By strong convexity of $\varphi _k$ and $-\frac{d}{\b_k} \dot{x}_k  \in  \p \varphi_k (x_{k+1})$, we have
\begin{equation}\label{8d}
\left\langle\frac{d}{\b_k} \dot{x}_k, x_{k+1}-y_{k} \right\rangle \leq -\left(\varphi_k(x_{k+1})-\varphi _k (y_{k})\right)-\frac{1-d}{2\b_k}\norm{x_{k+1}-y_{k} }^2.
\end{equation}
Also, using Lemma \ref{lemma 2 discr} $ii)$  and inequality in  \eqref{x^*}, we have for any $b>0$
\begin{equation}\label{9d}
\begin{array}{lll}
-\l   \<\dot{y}_{k-1} , x_{k+1}-y_{k}\> &\leq& \dfrac{\l b}{2}\norm{\dot{y}_{k-1}}^2+\dfrac{\l}{2b}\norm{x_{k+1}-y_{k}}^2\\\\
&\leq& \dfrac{\l b}{2}\dfrac{\dot{\b}^2_{k-1}}{\b^2_{k}}\norm{x^*}^2+\dfrac{\l}{2b}\norm{x_{k+1}-y_{k}}^2.
\end{array}
\end{equation}
Combining \eqref{7d}, \eqref{8d} and \eqref{9d}, we deduce 
\begin{equation}\label{10d}
\begin{array}{lll}
\l \< \dot{v}_{k} , v_{k+1}\> &\leq&\dfrac{-\l \b_k}{d}\left(\varphi_k(x_{k+1})-\varphi _k (y_{k})\right) +\dfrac{\l b}{2}\dfrac{\dot{\b}^2_{k-1}}{\b^2_{k}}\norm{x^*}^2\\\\
& &+\dfrac{\l}{2}\left(\dfrac{1}{b}-\dfrac{1-d}{d} \right)\norm{x_{k+1}-y_{k}}^2.
\end{array}
\end{equation}
Using again the strong convexity of  $\varphi _k$, we have
\begin{equation*}
\varphi _k (x_{k})-\varphi_k(x_{k+1})\geq \left\<\frac{d}{\b_k} \dot{x}_k, x_{k+1}-x_{k} \right\>+ \dfrac{1-d}{2\b_k}\norm{x_{k+1}-x_{k} }^2 =  \dfrac{1+d}{2\b_k}\norm{\dot{x}_k}^2,
\end{equation*}
which implies
\begin{equation}\label{11d}
\begin{array}{lll}
\b_k\left(\varphi_k(x_{k+1})-\varphi _k (x_{k})\right)&\leq& -\dfrac{1+d}{2}\norm{\dot{x}_k}^2 \leq 0.
\end{array}
\end{equation}

Returning to  \eqref{6d},\eqref{10d},\eqref{11d}  and using $\norm{y_{k+1}}\leq\norm{x^*}$, we obtain
\begin{equation}\label{12d}
\begin{array}{lll}
\dot{E}_{k} &\leq&  \frac{\l}{2}\left(\frac{1}{b}-\frac{1-d}{d} \right)\norm{x_{k+1}-y_{k}}^2  +\frac{1}{2}\left( \l b \frac{\dot{\b}^2_{k-1}}{\b^2_{k}}+(1-d)\frac{\dot{\b}_{k}}{\b_k} \right) \norm{x^*}^2 \\\\
&\,&+\left(\b_{k+1}-(1+\frac{\l}{d})\b _k\right)\left[ \varphi_{k} (x_{k+1})-\varphi_{k} (y_k)   \right].
\end{array}
\end{equation}
Set $\m:=\frac{\r}{1-\r}$, then $\m \in \left]0,\frac{1-d}{d}\right[$, and according to the definition of $E_{k}$, we get
\begin{equation*}
\begin{array}{lll}
\m E_{k+1} &=&\m  \b_{k+1}\left( \varphi_{k+1}(x_{k+1})-\varphi_{k+1}(y_{k+1}) \right)+\dfrac{\m \l}{2}\norm{ x_{k+1}-y_{k}}^2 \\\\
&=&\m  \b_{k+1}\left( \varphi_{k}(x_{k+1})-\varphi_{k}(y_k) \right)+\dfrac{\m \l}{2}\norm{ x_{k+1}-y_{k}}^2\\\\
& & +\m  \b_{k+1}\left( \varphi_{k+1}(x_{k+1})-\varphi_{k}(x_{k+1})+\varphi_{k}(y_k)-\varphi_{k+1}(y_{k+1}) \right)
\end{array}
\end{equation*}
Using \eqref{5} and Lemma \ref{lemma 2 discr} $i)$, we obtain
\begin{equation}
\begin{array}{lll}\label{13d}
\m E_{k+1} &\leq&\m  \b_{k+1}\left( \varphi_{k}(x_{k+1})-\varphi_{k}(y_k) \right)+\dfrac{\m (1-d)\dot{\b}_k}{2\b_k}  \norm{x^*}^2 \\\\
&\,&+\dfrac{\m \l}{2}\norm{x_{k+1}-y_{k}}^2.
\end{array}
\end{equation}
Adding  \eqref{12d} and \eqref{13d}, we conclude
\begin{eqnarray}
\dot{E}_{k}+ \m E_{k+1}&\leq&  \dfrac{\l}{2}\left(\underbrace{\dfrac{1}{b}-\dfrac{1-d}{d} +\m}_{A} \right)\norm{x_{k+1}-y_{k}}^2 		\nonumber\\
&\,& +\dfrac{1}{2}\left( \l b \dfrac{\dot{\b}^2_{k-1}}{\b^2_{k}}+(1-d)(1+\m)\dfrac{\dot{\b}_{k}}{\b_k} \right) \norm{x^*}^2 		\nonumber\\
&\,&+\left(\underbrace{(1+\m)\b_{k+1}-\left(1+\frac{\l}{d}\right)\b _k}_{B}\right)\left( \varphi_{k} (x_{k+1})-\varphi_{k} (y_k)   \right).		\label{eq33}
\end{eqnarray}
Choosing $\l$ such that for all $k$: ${\l>d\left( (1+\m)\dfrac{\b _{k+1}}{\b_k}-1 \right)}$, which  is valid since {$\left(\frac{\b _{k+1}}{\b_k}\right)$ is bounded}, we get 
$$
B=\dfrac {\b_k}{d}\left(d\left( (1+\m)\dfrac{\b _{k+1}}{\b_k}-1 \right)-\l\right)\leq 0.
$$ 
Choosing also $b=\dfrac{d}{1-(1+\m)d}$, which is positive  since $\m<\frac{1-d}{d}$,  we cancel the term $A$ in inequality \eqref{eq33}. Consequently, for all $k\geq k_0$  
\begin{equation}\label{14d}
\dot{E}_{k} +\m E_{k+1}\leq \g _k \norm{x^*}^2
\end{equation}
with
\begin{equation*}
\g_k :=\dfrac{1}{2}\left( \dfrac{\l d}{1-(1+\m )d}  \dfrac{\dot{\b}^2_{k-1}}{\b^2_{k}}+(1-d)(1+\m)\dfrac{\dot{\b}_{k}}{\b_k} \right).
\end{equation*}
Inequality \eqref{14d} is equivalent to 
$$
E_{k+1}+\left(\dfrac{\m}{1+\m }-1 \right) E_{k}\leq \left(1-\dfrac{\m}{1+\m } \right) \g_k \norm{x^*}^2. 
$$
Multiplying the last inequality by $e^{\r (k+1)}$ and using $\r = \dfrac{\m}{1+\m}$,  we obtain
\begin{equation*}
e^{\r (k+1)}E_{k+1}+(\r-1 ) e^{\r (k+1)} E_{k}\leq (1-\r)e^{\r (k+1)}\g_{k} \norm{x^*}^2.
\end{equation*}
Hence, for all $k\geq k_0$
\begin{equation*}
e^{\r (k+1)}E_{k+1}-e^{\r k}E_{k} +(e^{-\r}+\r-1)e^{\r (k+1)} E_{k}\leq (1-\r)e^{\r (k+1)}\g_{k} \norm{x^*}^2.
\end{equation*}
Remarking the function $y\mapsto e^{-y}+y-1$ is nonnegative on $\R_+$, we conclude that, for all $k\geq k_0$,
\begin{equation}\label{15d}
e^{\r (k+1)}E_{k+1}-e^{\r k}E_{k} \leq me^{\r (k+1)}\dfrac{\dot{\b}_{k}}{\b_k} \left( r\left(\dfrac{\dot{\b}_{k-1}}{\dot{\b}_k}\right)^2  \dfrac{\dot{\b}_{k}}{\b_k} + 1\right),
\end{equation}
with $r:= \dfrac{\l d }{(1-\r-d)(1-d)}$ and  $m:=\frac12(1-\r)(1-d)(1+\m)\norm{x^*}^2$.\\
\if{
After summing the above inequalities between $k_0$ and $k > k_0$, and dividing by $e^{\r (k+1)}$ we get
\begin{equation}\label{estim E_kkkk}
E_{k+1} \leq \dfrac{e^{\r k_0}E_{k_0}}{e^{\r(k+1)}} +\dfrac{(1-d)(1-\r)^2\norm{x^*}^2}{2e^{\r (k+1)}} \left(\sum\limits_{j=k_0}^k e^{\r (j+1)}\left( r \left(\dfrac{\dot{\b}_{j-1}}{\b_{j}}\right)^2+\dfrac{\dot{\b}_{j}}{\b_j} \right)\right)
\end{equation}
with $r:= \dfrac{\l d }{(1-\r-d)(1-d)}$.\\
}\fi
Using  the assumption  \ref{H02}, we justify 
$$
\underset{k\to +\infty}{\lim}  \frac{\dot{\b}_k}{\b_k}=\underset{k\to +\infty}{\lim}\left( \frac{\b_{k+1}}{\b_k} -1\right)=\ell -1  \hbox{  and }\underset{k\to +\infty}{\lim} \frac{\dot{\b}_k}{\dot{\b}_{k+1}}  \frac{\b_{k+1}}{\b_{k}}=1 .
$$
Thus,   we have
\begin{eqnarray*}
\underset{k\to +\infty}{\lim} 
\dfrac{e^{\r (k+1)}\dfrac{\dot{\b}_{k}}{\b_k} \left( r\left(\dfrac{\dot{\b}_{k-1}}{\dot{\b}_k}\right)^2  \dfrac{\dot{\b}_{k}}{\b_k} + 1\right)}{\dfrac{\dot{\b}_{k+1}}{\b_{k+1}} e^{\r(k+2)}-\dfrac{\dot{\b}_k }{ \b_{k}}e^{\r (k+1)}} &=&
\underset{k\to +\infty}{\lim}
\dfrac{  r\left(\dfrac{\dot{\b}_{k-1}}{\dot{\b}_k}\right)^2  \dfrac{\dot{\b}_{k}}{\b_k} + 1}{\dfrac{\b_k}{\b_{k+1}}\dfrac{\dot{\b}_{k}}{\dot{\b}_{k+1}} e^{\r}-1} \\
	&=& \dfrac{  \dfrac{r(\ell -1)}{\ell^2}+1}{e^{\r}-1} < +\infty.
\end{eqnarray*}
%
Therefore, there exist $M>0$ and $k_1\geq k_0$ such that for $k\geq k_1$ 
\if{
\begin{equation*}
m\leq \dfrac{1}{\frac{\b_{k+1}}{\b_k}\frac{\dot{\b}_{k}}{\dot{\b}_{k+1}} \left( r\left(\frac{\dot{\b}_{k-1}}{\dot{\b}_k}\right)^2   \frac{\dot{\b}_k}{\b_k} + 1 \right)} \left( e^\r -\frac{\dot{\b}_k}{\dot{\b}_{k+1}}  \frac{\b_{k+1}}{\b_{k}} \right) .
\end{equation*}
Hence
\begin{equation*}
m\frac{\b_{j+1}}{\b_j}\frac{\dot{\b}_{j}}{\dot{\b}_{j+1}}  
\left( r\left(\frac{\dot{\b}_{j-1}}{\dot{\b}_j}\right)^2 \dfrac{\dot{\b}_{j}}{\b_j} + 1\right)\leq e^\r -\frac{\dot{\b}_j}{\dot{\b}_{j+1}}  \frac{\b_{j+1}}{\b_{j}}.
\end{equation*}
Multiplying by $\dfrac{\dot{\b}_{j+1}}{m\b_{j+1}}e^{\r (j+1)}$  we obtain 
}\fi
\begin{equation}\label{16d}
me^{\r (k+1)} \dfrac{\dot{\b}_{k}}{\b_k} \left( r\left(\frac{\dot{\b}_{k-1}}{\dot{\b}_k}\right)^2  \dfrac{\dot{\b}_{k}}{\b_k} + 1\right) \leq  M\left(\dfrac{\dot{\b}_{k+1}}{\b_{k+1}} e^{\r(k+2)}-\dfrac{\dot{\b}_k }{ \b_{k}}e^{\r (k+1)} \right).
\end{equation}
We conclude, for all $k\geq k_1$, 
\begin{equation}\label{15d2}
e^{\r (k+1)}E_{k+1}-e^{\r k}E_{k} \leq M\left(\dfrac{\dot{\b}_{k+1}}{\b_{k+1}} e^{\r(k+2)}-\dfrac{\dot{\b}_k }{ \b_{k}}e^{\r (k+1)} \right) .
\end{equation}
After summing the inequalities \eqref{15d2} between $k_1$ and $k > k_1$, and dividing by $e^{\r (k+1)}$ we get
\if{
\begin{equation}\label{estim E_kkkk}
E_{k+1} \leq \dfrac{e^{\r k_1}E_{k_1}}{e^{\r(k+1)}} +\dfrac{M}{e^{\r (k+1)}} \left(\sum\limits_{j=k_1}^k e^{\r (j+1)}\left( r \left(\dfrac{\dot{\b}_{j-1}}{\b_{j}}\right)^2+\dfrac{\dot{\b}_{j}}{\b_j} \right)\right)
\end{equation}
with $r:= \dfrac{\l d }{(1-\r-d)(1-d)}$.\\
Combining \eqref{16d} and \eqref{estim E_kkkk}, we deduce that for $k > k_1 $
}\fi
\begin{equation*}
\begin{array}{lll}
E_{k+1} &\leq& \dfrac{e^{\r k_1}E_{k_1}}{e^{\r(k+1)}}+\dfrac{M}{e^{\r (k+1)}}  \sum\limits_{j=k_1}^k \left( \dfrac{\dot{\b}_{j+1}}{\b_{j+1}}e^{\r (j+2)}-\dfrac{\dot{\b}_j}{\b_j}e^{\r (j+1)}\right)\\
&=& \dfrac{e^{\r k_1}E_{k_1}}{e^{\r(k+1)}}+\dfrac{M}{e^{\r (k+1)}} \left( \dfrac{\dot{\b}_{k+1}}{\b_{k+1}}e^{\r (k+2)}-\dfrac{\dot{\b}_{k_1}}{\b_{k_1}}e^{\r (k_1 +1)}\right)\\
& \leq&\dfrac{e^{\r k_1}E_{k_1}}{e^{\r(k+1)}} + Me^{\r }\dfrac{\dot{\b}_{k+1}}{\b_{k+1}}.
\end{array}
\end{equation*}
Consequently, for $k$ large enough
\begin{equation}\label{estim E_k}
E_{k}=  \mathcal{O}\left( \dfrac{\dot{\b}_{k}}{\b_{k}} + e^{-\r k}  \right).
\end{equation} 

\textbf{i)} Return  to \eqref{con- val disr} and using boundedness of the sequence $ \left( \frac{\dot{\b}_k}{\b_k} \right) _k$
we have $(E_k)$ is bounded, and thus
\begin{equation*}
f(x_k)-\underset{\mathcal{H}}{\min} \,f=\mathcal{O}\left( \dfrac{1}{\b_k} \right).
\end{equation*}
Combining \eqref{conv-tra discr} and \eqref{11d} with \eqref{estim E_k} we get  for $k$ large enough
\begin{equation}\label{32d}
\norm{\dot x_k }^2   = \mathcal{O} \left(    \dfrac{\dot{\b}_k}{\b_k}+e^{-\r k} \right)  \text{ and }\norm{ x_k -  y_{k} }^2   = \mathcal{O} \left(    \dfrac{\dot{\b}_k}{\b_k}+e^{-\r k} \right) .
\end{equation}

\textbf{ii)}  We have 
$$
\lim_{k\rightarrow +\infty}\dfrac{\dot{\b}_k}{\b_k}= \lim_{k\rightarrow +\infty}\dfrac{{\b}_{k+1}}{\b_k}-1=\ell-1=0,
$$
then $x_k -  y_{k} $ strongly converges to the origine of $\mathcal H$. Return to \eqref{x^*}, we conclude that $x_k$ strongly converges to $x^*$.\\

\textbf{iii)} According to \eqref{disc syst }, let $z_k\in \partial f(x_{k+1})$ satisfying $x_{k+1}+\b_kz_k-dx_{k}=0$. Then
\begin{eqnarray*}
d(0,\partial f(x_{k+1}))^2 &\leq& \|z_k\|^2 = \norm{\frac{d}{\b_k}\dot x_k + \frac{1-d}{\b_k}x_{k+1}}^2\\
	&\leq& \frac{2d^2}{\b_k^2}\norm{\dot x_k}^2 + \frac{2(1-d)^2}{\b_k^2}\norm{ x_{k+1}}^2.
\end{eqnarray*}
Hence, \eqref{32d} and boundedness of $(x_k)$ lead to 
$$
d(0,\partial f(x_{k+1}))= \mathcal{O} \left(    \dfrac{1}{\b_k}\right), \text{  for $k$ large enough}.
$$
If in addition $f$ is differentiable,   we obtain the large convergence rate \eqref{eq:grad35}.
\if{

to Lipschitz continuity of $\n f $, we rely on \eqref{nn} and \eqref{x^*} to obtain
\begin{eqnarray*}
\|\n f (x_k)\|^2 &\leq & 2 \|\n f (x_k) - \n f (y_{k})\|^2 + 2\| \n f (y_{k})\| ^2\\
	&\leq & 2L^2\|x_k - y_{k}\|^2 + \frac{2c^2}{\b^2_k}\|y_{k}\|^2\\
		&\leq & 2L^2\|x_k - y_{k}\|^2 + \frac{2c^2}{\b^2_k}\|x^*\|^2.
\end{eqnarray*}
Then using \eqref{32d}, we conclude
\begin{equation*}
\norm{ \nabla f(x_k)} ^2 =   \mathcal{O}\left( \dfrac{\dot{\b}_k}{\b_k}+ \dfrac{1}{\b ^2 _k} +e^{-\r k}\right),\,\, \textit{as} \; k \rightarrow +\infty. 
\end{equation*}
}\fi
\end{proof}

\section{Application to special cases}
In this section we review two special cases for the sequence $(\b_k )$.
\subsection{\underline {Case $\b_k = k^m \ln ^q (k) $}:}
 To  investigate the fulfillment of the condition  \ref{H02}, let us first note that for $a>0$ and for $k$ large enough we have 
$$
\;\left(1+\frac{a}{k}\right)^m =1+\frac{am}{k}+o\left( \frac{1}{k} \right)\;\; \text{ and } \;\;\ln \left(1+\frac{a}{k}\right) =\frac{a}{k}+o\left( \frac{1}{k} \right). 
$$
Then
\begin{eqnarray*}
\left(1+\frac{a}{k}\right)^m \left(1+\frac{\ln (1+\frac{a}{k})}{\ln (k)}\right) ^q &=&\left(1+\frac{am}{k}+o\left( \frac{1}{k} \right)\right) \left(1+\frac{a}{k\ln (k)}+o\left( \frac{1}{k\ln (k)} \right)\right) ^q\\\\
&=& \left(1+\frac{am}{k}+o\left( \frac{1}{k} \right)\right) \left(1+\frac{aq}{k\ln (k)}+o\left( \frac{1}{k\ln (k)} \right)\right);
\end{eqnarray*}
which gives 
\begin{equation}\label{Tay}
\left(1+\frac{a}{k}\right)^m \left(1+\frac{\ln (1+\frac{a}{k})}{\ln (k)}\right) ^q= 1+\frac{am}{k}+\frac{aq}{k\ln (k)}+o\left( \frac{1}{k\ln (k)} \right).
\end{equation}
\begin{itemize}
\item[$\bullet$]  For \ref{H02}(i), we have $\dot{\b}_ k\neq 0\textit{ for } k>1 $, which  is trivial.\\
\item[$\bullet$]  For \ref{H02}(ii), we come back to \eqref{Tay} to deduce that when $k$ is large enough
\begin{eqnarray*}
 \dfrac{\b_{k+1}}{\b_k}
&=& \dfrac{(k+1)^m \ln ^q  (k+1)}{k^m \ln ^q  (k)}\\
&=&\left( 1+\dfrac{1}{k} \right)^m \left(1+\dfrac{\ln (1+\frac{1}{k})}{\ln (k)} \right)^q\\
&=&1+\frac{m}{k}+\frac{q}{k\ln (k)}+o\left( \frac{1}{k\ln (k)} \right).
\end{eqnarray*}
Thus 
$\lim_{k\rightarrow +\infty}\dfrac{\b_{k+1}}{\b_k}=1>0$.
\\
We also  remark that
\begin{equation*}
\begin{array}{lll}
\dfrac{\dot{\b }_{k+1}}{\dot{\b}_k} &=&\dfrac{(k+2)^m\ln^q (k+2)-(k+1)^m\ln^q (k+1)}{(k+1)^m\ln^q (k+1)-k^m\ln^q(k)}\\\\
&=& \dfrac{\left(1+\frac{2}{k}\right)^m \left(1+\frac{\ln (1+\frac{2}{k})}{\ln (k)}\right) ^q-\left(1+\frac{1}{k}\right)^m\left(1+\frac{\ln (1+\frac{1}{k})}{\ln (k)}\right) ^q}{\left(1+\frac{1}{k}\right)^m\left(1+\frac{\ln (1+\frac{1}{k})}{\ln (k)}\right) ^q-1}.
\end{array}
\end{equation*}  
Then, using \eqref{Tay}, we get for $k$ large enough
\begin{equation*}
\begin{array}{lll}
\dfrac{\dot{\b }_{k+1}}{\dot{\b}_k} &=& \dfrac{\!\left(\!1+\frac{2m}{k}+\frac{2q}{k\ln (k)}+o\left(\! \frac{1}{k\ln (k)} \!\right) \right)-\left(\! 1+\frac{m}{k}+\frac{q}{k\ln (k)}+o\left( \frac{1}{k\ln (k)} \!\right)\!\right) }{\left(\!1+\frac{m}{k}+\frac{q}{k\ln (k)}+o\!\left(\! \frac{1}{k\ln (k)} \!\right)\!\right) -1}\\
&=& \dfrac{m\ln (k) +q +o(1) }{m\ln (k) +q +o(1)}.
\end{array}
\end{equation*} 
Thus, for $(m,q)\in (\mathbb{R}^+)^2 \setminus \{(0,0)\}$, we deduce $\underset{k \to +\infty}{\lim}\dfrac{\dot{\b }_{k+1}}{\dot{\b}_k}=1. $
\end{itemize}
Consequently all conditions of \ref{H02} are satisfied. 
So, based on Theorem \ref{Th 2 disc}, the following result can be proved  easily.
\begin{proposition}\label{corollary 1 ln}
Let $f$  and $(x_k)$ be as in Theorem \ref{Th 2 disc}, and $\b_k = k^m \ln ^q  (k) $ where $(m,q)\in (\mathbb{R}^+)^2 \setminus \{(0,0)\}$. Then $(x_k)$  converges strongly to $x^*$  the element of minimum norm of $\text{argmin}_{\mathcal{H}}f$ and
 \begin{eqnarray*}
&& f(x_k)-\underset{\mathcal{H}}{\min}f = \mathcal{O} \left( \frac{1}{k^m \ln^q(k)} \right);\;\;\;\;
\norm{ \dot x_k}^2   =\left \{
\begin{array}{lcl}
 \mathcal{O} \left( \frac{1}{k} \right) \;\;&\textit{ if }\; r\neq 0&  \\
 \mathcal{O} \left( \frac{1}{k\ln (k)} \right)\;\; &\textit{ if }\; r= 0;& \\
\end{array}
\right. \\ 
&&d(0,\partial f(x_{k}))=\mathcal{O} \left( \frac{1}{k^m \ln^q(k)} \right).
 \end{eqnarray*}
If moreover  $f$ is differentiable, then\; 
\begin{eqnarray*}
&&\; \norm{ \nabla f(x_k)}  =   
 \mathcal{O} \left( \dfrac{1}{k^{m}\ln ^{q}(k)} \right).
 \end{eqnarray*}
\end{proposition}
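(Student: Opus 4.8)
The plan is to read off every assertion directly from Theorem \ref{Th 2 disc}, since the verification that $\b_k=k^m\ln^q(k)$ fulfils the standing hypotheses has essentially been completed in the preceding subsection. First I would note that for $(m,q)\in(\mathbb{R}^+)^2\setminus\{(0,0)\}$ the sequence $\b_k$ is nondecreasing for $k$ large and $\b_k\to+\infty$, so the basic hypothesis on $(\b_k)$ holds, and I would recall the two limits already established, $\lim_{k\to+\infty}\b_{k+1}/\b_k=1$ and $\lim_{k\to+\infty}\dot\b_{k+1}/\dot\b_k=1$, which are precisely \ref{H02} with $\ell=1$. Strong convergence of $(x_k)$ to $x^*$ is then immediate from part (ii) of Theorem \ref{Th 2 disc}.

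Next I would specialize the three estimates of Theorem \ref{Th 2 disc}(i). The value rate is direct: $f(x_k)-\min_\cH f=\mathcal{O}(1/\b_k)=\mathcal{O}\big(1/(k^m\ln^q(k))\big)$. For $\norm{\dot x_k}^2$ and $\norm{x_k-y_k}^2$, which the theorem bounds by $\dot\b_k/\b_k+e^{-\r k}$, I would insert the expansion obtained in \eqref{Tay},
\[
\frac{\dot\b_k}{\b_k}=\frac{\b_{k+1}}{\b_k}-1=\frac{m}{k}+\frac{q}{k\ln(k)}+o\!\left(\frac{1}{k\ln(k)}\right).
\]
Since $e^{-\r k}$ decays faster than any negative power of $k$, it is absorbed, and the stated dichotomy follows: when $m\neq 0$ the leading term is $m/k$, giving $\mathcal{O}(1/k)$, whereas when $m=0$ (hence $q\neq0$) the leading term is $q/(k\ln(k))$, giving $\mathcal{O}\big(1/(k\ln(k))\big)$.

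Finally I would handle the subdifferential and gradient bounds. Theorem \ref{Th 2 disc}(iii) furnishes $d(0,\partial f(x_{k+1}))=\mathcal{O}(1/\b_k)$; because $\b_{k-1}/\b_k\to1$ the shift of index costs only a bounded factor, so $d(0,\partial f(x_k))=\mathcal{O}(1/\b_{k-1})=\mathcal{O}\big(1/(k^m\ln^q(k))\big)$, and when $f$ is differentiable the same part of the theorem yields $\norm{\nabla f(x_k)}=\mathcal{O}(1/\b_k)=\mathcal{O}\big(1/(k^m\ln^q(k))\big)$.

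I expect no genuine obstacle here: all the analytic content lies in Theorem \ref{Th 2 disc} and in the Taylor expansion \eqref{Tay} already carried out before the statement, so the proof is essentially bookkeeping. The only points requiring a little care are the harmless reindexing in the subdifferential estimate and isolating the degenerate case $m=0$ so that the slower $1/(k\ln k)$ rate is not swallowed by a vanishing coefficient.
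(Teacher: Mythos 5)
Your proposal is correct and follows exactly the route the paper takes: verify \ref{H02} with $\ell=1$ via the Taylor expansion \eqref{Tay} (which the paper carries out immediately before the statement), then specialize Theorem \ref{Th 2 disc}(i)--(iii), substituting $\dot\b_k/\b_k=\frac{m}{k}+\frac{q}{k\ln(k)}+o\left(\frac{1}{k\ln(k)}\right)$ to obtain the stated dichotomy (noting that the ``$r$'' in the displayed rate for $\norm{\dot x_k}^2$ is the paper's $m$). Your added remarks on absorbing $e^{-\r k}$ and on the harmless index shift in the subdifferential estimate are exactly the bookkeeping the paper leaves implicit.
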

\subsection{\underline {Case $ \beta_k = k^me^{\gamma k^r}$}:\;}
  Let us  now  treat   the case $ \beta_k = k^me^{\gamma k^r}$ with $r\in ]0,1], m\geq 0$ and $\gamma >0$.  To  fulfill   the condition  \ref{H02}, we first remark that for $0<r<1$ and $a>0$ we have  for $k$ large enough
\begin{equation*}\label{Tay 2}
(k+a)^r -k^r = k^r \left[ \left( 1+\dfrac{a}{k} \right)^r -1  \right] =ark^{r-1}+o(k^{r-1}).
\end{equation*} 
Hence,   for $k$ large enough
\begin{equation}\label{Tay 3}
e^{\g((k+a)^r-k^r)}=e^{a\g r k^{r-1}+o(k^{r-1})}=1+a\g rk^{r-1}+o(k^{r-1}).
\end{equation}
\begin{itemize}
\item[$\bullet\,$] Likewise \ref{H02}(i), $\dot{\b}_ k\neq 0\textit{ for } k>1 $ is trivial.
\item[$\bullet\,$] For \ref{H02}(ii), we distinguish two cases:

$\star$ If $0<r<1$  then using \eqref{Tay 3} with $a=1$, we get for $k$ large enough 
 \begin{equation*}
 \dfrac{\b_{k+1}}{\b_k}= \left(1+\frac1{k}\right)^me^{\gamma \left((k+1)^r-k^r\right)}= 1+\g rk^{r-1}+o(k^{r-1}),
\end{equation*}  
\begin{equation*}
\begin{array}{lll}
\dfrac{\dot{\b }_{k+1}}{\dot{\b}_k}&=& \dfrac{(k+2)^me^{\g(k+2)^r}-(k+1)^me^{\g(k+1)^r}}{(k+1)^me^{\g(k+1)^r}-k^me^{\g k^r}}\\\\
&=&\dfrac{1+\frac{m}{\g rk^r}+o\left(\frac{m}{k^{-r}}\right)}{1+\frac{m}{\g rk^r}+o\left(\frac{m}{k^{-r}}\right)}.
\end{array}
\end{equation*}
Thus 
$$ 
\underset{k \to +\infty}{\lim}\dfrac{{\b }_{k+1}}{{\b}_k}=\underset{k \to +\infty}{\lim}\dfrac{\dot{\b }_{k+1}}{\dot{\b}_k}=1>0. 
$$

$\star$  If $r=1$, we have for $k$ large enough the termes  $ \dfrac{\b_{k+1}}{\b_k}$  and $\dfrac{\dot{\b }_{k+1}}{\dot{\b}_k}$ are equivalent to $e^{\g}\left( 1+\frac{m}{k}+o\left(\frac{1}{k}\right)\right)$, so 
$$
\underset{k \to +\infty}{\lim}\dfrac{{\b }_{k+1}}{{\b}_k} =\underset{k \to +\infty}{\lim}\dfrac{\dot{\b }_{k+1}}{\dot{\b}_k}=e^{\g}>0. 
$$
\end{itemize} 

Applying again  Theorem \ref{Th 2 disc}, we get the following Corollary.
\begin{proposition}\label{corollary 2 disc}
Let $f$  and $(x_k)$ be as in Theorem \ref{Th 2 disc}, where $\b_k =   k^me^{\gamma k^r}.$ 
Then

$\star$  if $r=1$ and $m=0$, we have  $(x_k)$ is bounded and   for $k$ large enough
\begin{eqnarray*}
&&f(x_k)-\underset{\mathcal{H}}{\min}f = \mathcal{O} \left( e^{-\gamma k} \right);
\end{eqnarray*}

$\star$ if either $0<r<1$ or $m>0$,  $(x_k)$  converges strongly to $x^*$  the element of minimum norm of $\text{argmin}_{\mathcal{H}}f$, and for $k$ large enough, we have   
\begin{eqnarray*}
&& f(x_k)-\underset{\mathcal{H}}{\min}f = \mathcal{O} \left( k^{-m}e^{-\gamma k^r} \right), \;
\norm{\dot x_k}^2   = \mathcal{O} \left( \dfrac{1}{k^{1-r}} \right)\;\\
&&d(0,\partial f(x_{k}))=  \mathcal{O} \left( k^{-m}e^{-\gamma k^r} \right).
 \end{eqnarray*}
If moreover  $f$ is differentiable, then\; 
\begin{eqnarray*}
\text{and }\; \norm{\n f(x_{k})}  =    \mathcal{O} \left( k^{-m}e^{-\gamma k^r} \right).
\end{eqnarray*}
\end{proposition}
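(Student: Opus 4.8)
The plan is to obtain this statement as a direct specialization of Theorem~\ref{Th 2 disc} to the weight $\b_k = k^m e^{\gamma k^r}$; the only genuine work is to confirm hypothesis~\ref{H02}, to pin down the limit $\ell$, and to rewrite the generic bounds $O(1/\b_k)$ and $O(\dot\b_k/\b_k + e^{-\rho k})$ in the announced closed forms. The verification of~\ref{H02} is precisely the asymptotic computation carried out just before the statement: part~(i) is immediate because $\b_k$ is strictly increasing, while the expansion~\eqref{Tay 3} gives $\lim_k \b_{k+1}/\b_k = \lim_k \dot\b_{k+1}/\dot\b_k = \ell$, with $\ell = 1$ when $0<r<1$ and $\ell = e^{\gamma}$ when $r=1$. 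In both regimes $(\b_k)$ is nondecreasing with $\b_k \to +\infty$, so Theorem~\ref{Th 2 disc} applies.

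First I would read off the value rate: Theorem~\ref{Th 2 disc}(i) yields $f(x_k)-\min_{\cH} f = O(1/\b_k) = O(k^{-m} e^{-\gamma k^r})$, which collapses to $O(e^{-\gamma k})$ in the borderline case $r=1,\ m=0$. Next I would treat the velocity estimate $\norm{\dot x_k}^2 = O(\dot\b_k/\b_k + e^{-\rho k})$; since $\rho>0$ the exponential term is negligible, so the rate is dictated by $\dot\b_k/\b_k$. For $0<r<1$, \eqref{Tay 3} gives $\dot\b_k/\b_k = \b_{k+1}/\b_k - 1 \sim \gamma r\, k^{r-1}$, hence $\norm{\dot x_k}^2 = O(1/k^{1-r})$. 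The subgradient rate $d(0,\partial f(x_{k+1})) = O(1/\b_k)$ is supplied by Theorem~\ref{Th 2 disc}(iii); because $\b_{k+1}/\b_k$ tends to the finite limit $\ell$, the index shift is absorbed into the constant and gives $d(0,\partial f(x_k)) = O(k^{-m} e^{-\gamma k^r})$, and when $f$ is differentiable the unique element of $\partial f(x_k)$ is $\nabla f(x_k)$, upgrading this to the stated gradient estimate.

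For the qualitative behaviour I would split according to $\ell$. When $0<r<1$ we have $\ell=1$, equivalently $\dot\b_k/\b_k \to 0$, so Theorem~\ref{Th 2 disc}(ii) gives strong convergence of $(x_k)$ to the minimum-norm solution $x^*$ for every $m\ge 0$. At $r=1$ we instead have $\ell = e^{\gamma}>1$, so $\dot\b_k/\b_k \to e^{\gamma}-1$ is merely bounded; combining $\norm{x_k - y_k}^2 = O(\dot\b_k/\b_k + e^{-\rho k}) = O(1)$ with $\norm{y_k}\le\norm{x^*}$ from~\eqref{x^*} shows that $\norm{x_k}\le\norm{x_k-y_k}+\norm{y_k}$ stays bounded, which is exactly what is claimed in the subcase $m=0$, where the value rate further sharpens to $O(e^{-\gamma k})$.

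The main obstacle I anticipate is essentially bookkeeping around the limit $\ell$, and making sure strong convergence is attached to the correct regime: by Theorem~\ref{Th 2 disc}(ii) it holds precisely when $\dot\b_k/\b_k \to 0$, i.e.\ for $0<r<1$, and degenerates at $r=1$ where $\ell=e^{\gamma}\neq 1$ and only boundedness survives. One must likewise note that the velocity bound $O(1/k^{1-r})$ is informative only for $0<r<1$, since at $r=1$ it reduces to the trivial $O(1)$. Keeping the asymptotics of $\dot\b_k/\b_k$ and the negligibility of $e^{-\rho k}$ straight, and ensuring the stated dichotomy lines up with the $\ell=1$ condition, is the one point where a little care is required.
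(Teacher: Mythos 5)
Your route is the same as the paper's: the paper offers no separate proof of this proposition beyond the asymptotic verification of \ref{H02} displayed just before the statement, followed by ``Applying again Theorem \ref{Th 2 disc}''. Your identification of $\ell$ ($\ell=1$ for $0<r<1$, $\ell=e^{\g}$ for $r=1$) via \eqref{Tay 3}, the translation $\dot\b_k/\b_k\sim \g r\,k^{r-1}$ giving the $\mathcal{O}(1/k^{1-r})$ velocity rate, the absorption of the index shift in $d(0,\partial f(x_{k+1}))=\mathcal{O}(1/\b_k)$ using the finiteness of $\ell$, and the boundedness argument via $\norm{y_k}\le\norm{x^*}$ from \eqref{x^*} all match what the paper intends.

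The one substantive point concerns the sub-case $r=1$, $m>0$. The proposition asserts strong convergence there, but your argument delivers strong convergence only for $0<r<1$: Theorem \ref{Th 2 disc}(ii) requires $\ell=1$, whereas $\ell=e^{\g}\neq 1$ whenever $r=1$, irrespective of $m$, and the estimate $\norm{x_k-y_k}^2=\mathcal{O}\bigl(\dot\b_k/\b_k+e^{-\r k}\bigr)$ then degenerates to $\mathcal{O}(1)$, so only boundedness survives. You flag this explicitly and draw the dichotomy at $0<r<1$ versus $r=1$; you are right to do so, since the polynomial factor $k^m$ does not alter $\ell$. This mismatch is inherited from the paper's own one-line proof, which invokes Theorem \ref{Th 2 disc} without noticing that its part (ii) is inapplicable when $r=1$; it is not a gap you could have closed by the stated machinery. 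Just be aware that, as written, your proof establishes a slightly weaker conclusion than the printed statement in that one sub-case.
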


\section{Numerical example}
In this section, we present a numerical example in the framework of nondifferentiable minimization problem to illustrate the performance of our iterative method \eqref{proximal algorithm}. So, we consider the proper lower semicontinuous and convex function 
$$
f(x,y)= |x| + \iota_{[-a,a]}(x) + \frac12(y-v_0)^2
$$
where $a>0, v_0\in \mathbb R$ and $\iota_{[-a,a]}$ is the indicator function having the value $0$ on $[-a,a]$ and $+\infty$ elsewhere.   The well-known Fermat's and subdifferential sum rules for convex functions ensure that
$
(\bar x,\bar y) \in\text{argmin}_{\mathbb R^2}f ,
$
iff 
$$
(0,0)\in \partial f(\bar x,\bar y) = \partial\left( |\cdot| + \delta_{[-a,a]}\right)(\bar x)\times \partial\left( \frac12(\cdot-v_0)^2\right)(\bar y)  \iff (\bar x,\bar y)=(0,v_0).
$$
Thus $\text{argmin}_{\mathbb R^2}f = \{0,v_0)\}$.
Using the rules for calculating proximal operators, see \cite[Chap 6]{beck}, we have  for each $\lambda >0, (x,y)\in \mathbb R^2$,
\begin{eqnarray*}
\text{prox}_{\lambda f}(x,y) &=& \left( \text{prox}_{\lambda \left( |\cdot| + \delta_{[-a,a]}\right)}(x)\, , \, \text{prox}_{\lambda \left( \frac12(\cdot-v_0)^2\right)}(y)\right)\\
	&=&  \left(\min[\max(|x|-\l,0),a]\text{sign}(x)\; , \; -\frac1{\l + 1}(y-\l v_0)\right)
\end{eqnarray*}
Below, we explain our algorithm \eqref{proximal algorithm} and the one proposed by L\'aszl\'o in the recent paper \cite{Las23}:
\begin{eqnarray}
\label{eq:BCR}&&\left\{
\begin{array}{rll}
(x_0,y_0)&\in& \mathbb R^2,\\
(x_{k+1},y_{k+1})&=&\text{prox}_{\b_k f}(d x_k,dy_k)
\end{array}\right.
\\
\label{eq:L}&&\left\{
\begin{array}{rll}
(x_0,y_0)&,& (x_1,y_1)\in\mathbb R^2,\\
(u_k,v_k)&=& (x_k,y_k)+(1-\frac{\a}{k^q})(x_k-x_{k-1},y_k-y_{k-1}),\\
(x_{k+1},y_{k+1})&=&\text{prox}_{\l_k f}\left((u_k,v_k)-\frac{c}{k^p}(x_k,y_k)\right)
\end{array}\right.
\end{eqnarray}
We note that \eqref{eq:L} is an implicit discretization of the differentiable dynamical system studied in \cite{Las23A}, that is $\ddot x(t)+ \frac{\alpha}{t^q} \dot x(t)+\nabla f(x(t))+ \frac{c}{t^p} x(t)=0$ where $\alpha,c, q,p>0$. 
In \cite[Theorem 1.1]{Las23}, for $\l_k=\l k^\delta$, the conditions on the parameters $p,q,\alpha, \delta, \l$ and $c$ impose that $0< q<1, p\leq 2, \l>0, \d\in\mathbb R, c>0$ where the choice of $\d,\l$ and $c$ depend on the positioning of $p$ with respect to $q +1$.
\begin{figure} 
 \includegraphics[scale=0.35]{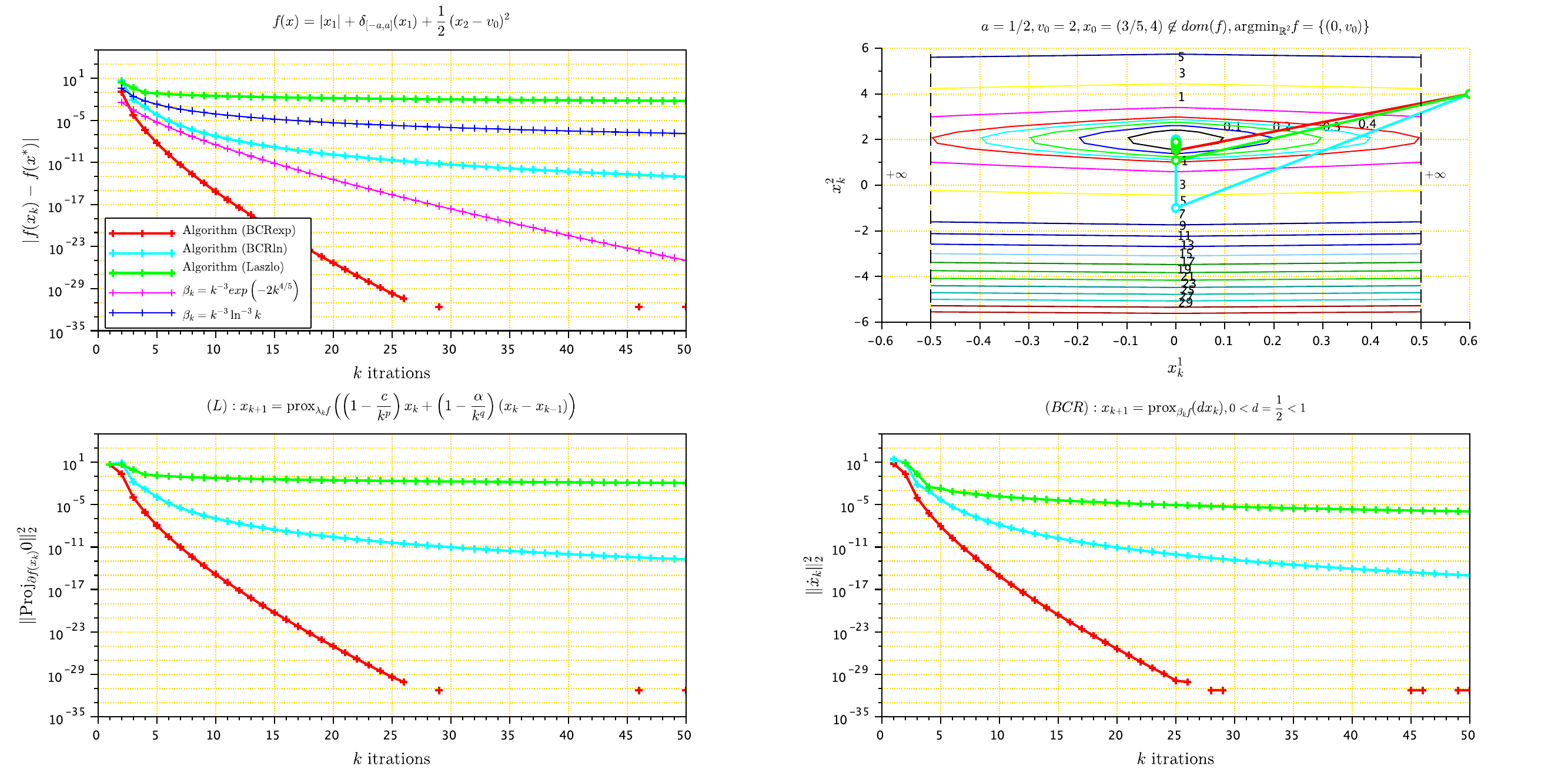}
  \caption{Comparison of convergence rates between the \eqref{eq:BCR} algorithm and that of L\'aszl\'o.}
 \label{fig:trigs-d} 
\end{figure}
The squared distance of $(x_k,y_k)$, generated by the above two iterations, to the  solution $(x^*,y^*)$ and the decay of the objective function $f(x_k,y_k)-\min_{\mathbb R^2}f(x,y)$ along the iterations are shown in Figure \ref{fig:trigs-d}  for the selected values $\b_k$ equal respectively to $k^3\ln^3k$ and $k^3e^{2 k^{4/5}}$ for the algorithm \eqref{eq:BCR}, and those equal to $p=2, q=\frac45, \alpha=2, \delta=2, \l=5$ for the algorithm \eqref{eq:L}. 
We note that the choice of $\beta_k=k^3e^{2 k^{4/5}}$ justifies the originality of  Theorem \ref{Th 2 disc}, since  we end up with an exponential convergence rate for the values and the gradient. Also,  the benefit of the inverse-relaxation $\beta_k=k^3e^{2 k^{4/5}}$ (red curve  in Figure \ref{fig:trigs-d}), as allowed in Proposition \ref{corollary 2 disc}, is clearly visible.

\begin{figure} 
 \includegraphics[scale=0.35]{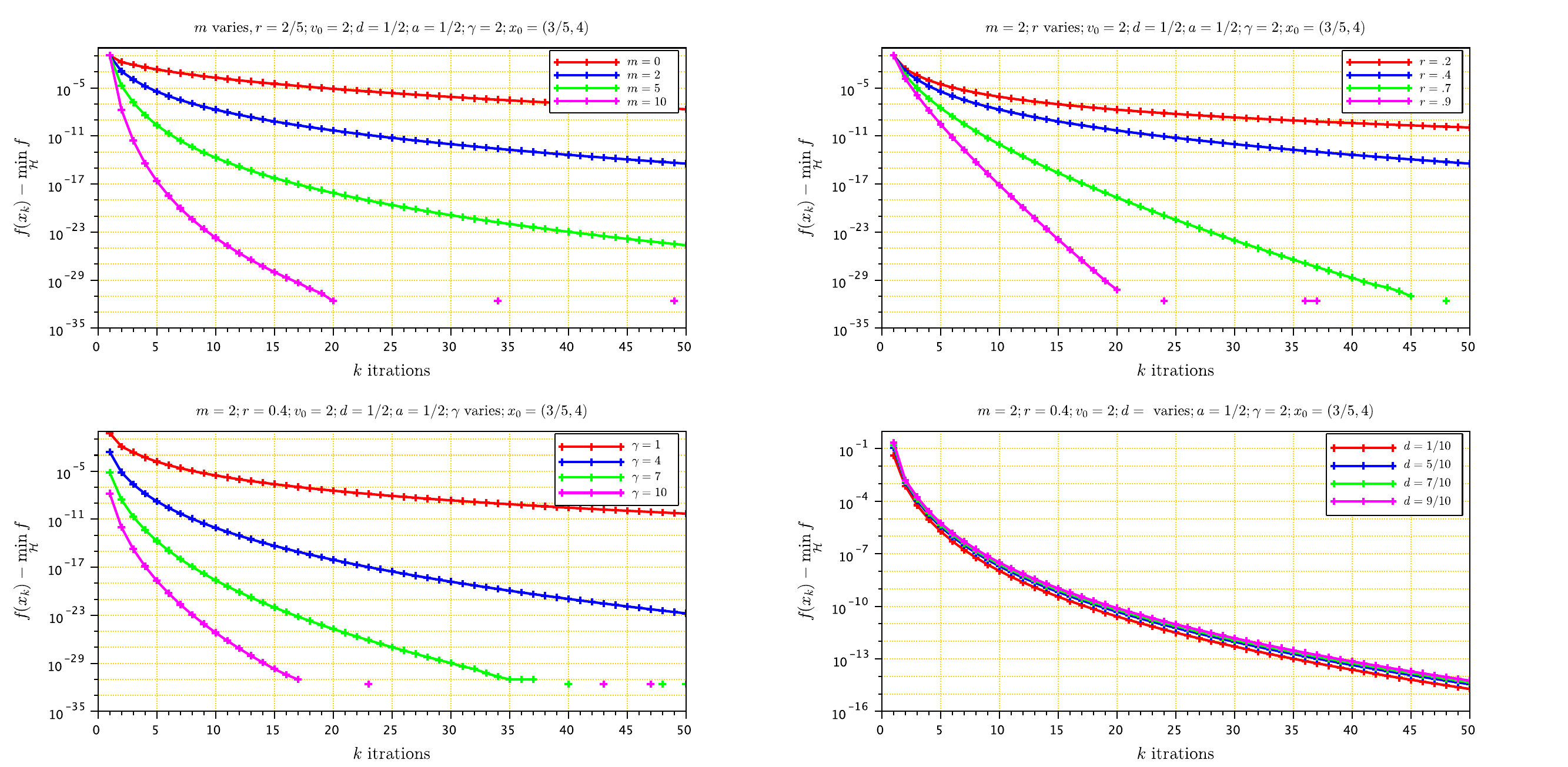}
  \caption{Convergence rates of values, trajectories  and gradients.}
 \label{fig:trigs-param} 
\end{figure}
Figure \ref{fig:trigs-param} explains the interest in the growth of the three parameters $m,r,\gamma$ in $\beta_k=k^me^{\gamma k^r}$, while the parameter $d$ in Algorithm \eqref{proximal algorithm} does not act on the convergence rates.
\if{
\newpage
\section{Implicit discretization for nonsmooth case}

\subsection{From prox algorithm to that of Forward-Backward}\label{sec:numerics}
Here, we illustrate our results on the composite problem on $\cH=\R^n$,
\begin{equation}\label{eq:minP}\tag{RLS}
\min_{x \in \R^n} \left\{ f(x) = \frac{1}{2}\norm{y-Ax}^2 + g(x) \right\} ,
\end{equation}
where $A$ is a linear operator from $\R^n$ to $\R^m$, $m \leq n$, $g: \R^n \to \Rb$ is a proper lower semicontinuous convex function which acts as a regularizer. The results in this section should usually be understood in finite dimensions, but most of them are dimension-independent and often hold in a Hilbert space.

Problem~\eqref{eq:minP} is extremely popular in a variety of fields ranging from inverse problems in signal/image processing, to machine learning and statistics. Typical examples of $g$ include the $\ell_1$ norm (Lasso), the $\ell_1-\ell_2$ norm (group Lasso), the total variation, or the nuclear norm (the $\ell_1$ norm of the singular values of $x \in \R^{N \times N}$ identified with a vector in $\R^n$ with $n=N^2$). To avoid trivialities, we assume that the set of minimizers of~\eqref{eq:minP} is non-empty.

Let $M=s^{-1}I-A^*A$, then proviso that $0 < s\norm{A}^2 < 1$, $M$ is a symmetric positive definite matrix. We define $\norm{x}_M =\langle Mx,x\rangle^{1/2}$, which in finite-dimensional spaces is a norm equivalent to $\norm{x}$. Then the associate Moreau envelope and proximal mapping are defined by
\[
f_{\lambda M}(x) = \min_{z \in \R^n} \left(\frac{1}{2\lambda }\norm{z-x}_M^2 + f(z)\right) = \min_{z \in \R^n} \left(\frac{1}{2}\norm{z-x}_{\lambda M}^2 + f(z)\right), 
\] 
\[
 {\prox}^{\lambda M}_{f}(x) =  {\prox}^{M}_{\lambda f}(x) =
\underset{z \in \R^n}{\argmin }
 \left(\frac{1}{2\lambda }\norm{z-x}_M^2 + f(z) \right).
\] 
Thus, our proposed algorithm is
\begin{equation}\label{eq:proxM}
x_{k+1}=\text{prox}_{\b_k f}^M(d x_k).
\end{equation}
We therefore apply Proposition \ref{corollary 1 ln} to the associate algorithm \eqref{eq:proxM} to obtain
\begin{proposition}\label{thm:M1}
Suppose $f$ proper convex lower semicontinous and $(x_k)$ generated by  \eqref{eq:proxM} such that $\b_k = k^r \ln ^q  (k) $ where $r,q\in (\mathbb{R}^+)^2 \setminus \{(0,0)\}$. Then $(x_k)$  converges strongly to $x^*$  the element of minimum norm of $\text{argmin}_{\mathcal{H}}f$ with respect to $\norm{\cdot}_M$, and
 \begin{eqnarray*}
&& f(x_k)-\underset{\mathcal{H}}{\min}f = \mathcal{O} \left( \frac{1}{k^r \ln^q(k)} \right);\;\;\;\;
\norm{ \dot x_k}^2   =\left \{
\begin{array}{lcl}
 \mathcal{O} \left( \frac{1}{k} \right) \;\;&\textit{ if }\; r\neq 0&  \\
 \mathcal{O} \left( \frac{1}{k\ln (k)} \right)\;\; &\textit{ if }\; r= 0.& \\
\end{array}
\right. 
 \end{eqnarray*}
\end{proposition}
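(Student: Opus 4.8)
The plan is to transport the problem into a weighted Hilbert space in which the $M$-proximal scheme \eqref{eq:proxM} is literally an instance of the proximal algorithm \eqref{proximal algorithm}, so that Proposition \ref{corollary 1 ln} applies without change. Since $0<s\norm{A}^2<1$, the operator $M=s^{-1}I-A^*A$ is symmetric positive definite, so $\dotp{u}{v}_M:=\dotp{Mu}{v}$ is an inner product on $\R^n$ whose norm $\norm{\cdot}_M$ is equivalent to the Euclidean norm. Write $\cH_M$ for the Hilbert space $(\R^n,\dotp{\cdot}{\cdot}_M)$. Properness, convexity and lower semicontinuity of $f$ do not depend on the choice of inner product, so $f$ is still proper, convex and lower semicontinuous on $\cH_M$, with the same minimizer set.

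First I would identify the $M$-proximal map with the proximal map of $\cH_M$. From the definition recalled just before \eqref{eq:proxM},
\[
\prox^{M}_{\b_k f}(x)=\argmin_{z\in\R^n}\Big(\b_k f(z)+\tfrac12\norm{z-x}_M^2\Big),
\]
and here $\tfrac12\norm{z-x}_M^2$ is exactly half the squared $\cH_M$-distance between $z$ and $x$; hence $\prox^{M}_{\b_k f}$ coincides with the proximal operator of $\b_k f$ computed in $\cH_M$. Consequently the iteration \eqref{eq:proxM}, $x_{k+1}=\prox^{M}_{\b_k f}(dx_k)$, is exactly the scheme \eqref{proximal algorithm} read in $\cH_M$ with the same $d\in\,]0,1[$. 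Taking the first-order optimality condition in $\cH_M$ gives $0\in\b_k\,\partial_M f(x_{k+1})+(x_{k+1}-dx_k)$, where $\partial_M$ is the subdifferential relative to $\dotp{\cdot}{\cdot}_M$; this is the precise $\cH_M$-analogue of \eqref{alg_xk}, which is all that the proof of Theorem \ref{Th 2 disc} ever uses.

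Next I would apply Proposition \ref{corollary 1 ln} inside $\cH_M$. The verification, carried out just before Proposition \ref{corollary 1 ln}, that $\b_k=k^r\ln^q k$ with $(r,q)\in(\R^+)^2\setminus\{(0,0)\}$ satisfies \ref{H02} (with ratio $\ell=1$) concerns only the scalar sequence $(\b_k)$ and is therefore insensitive to the underlying inner product. Proposition \ref{corollary 1 ln} then yields the strong convergence in $\cH_M$ of $(x_k)$ to the element $x^*$ of $\argmin_{\R^n}f$ of least $\cH_M$-norm, together with $f(x_k)-\min f=\mathcal O\big(1/(k^r\ln^q k)\big)$ and the announced bound on $\norm{\dot x_k}^2$ measured in $\norm{\cdot}_M$.

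Finally I would return to the Euclidean setting through norm equivalence: because $\norm{\cdot}_M$ and $\norm{\cdot}$ are equivalent on the finite-dimensional space $\R^n$, strong $\cH_M$-convergence is ordinary convergence, and every $\mathcal O$-rate is preserved up to a multiplicative constant. I do not anticipate a substantial difficulty: the only two points that genuinely need checking are the positive definiteness of $M$ (guaranteed by $0<s\norm{A}^2<1$) and the identification $\prox^{M}_{\b_k f}=\prox^{\cH_M}_{\b_k f}$; once these are in place, the statement is a direct specialization of Proposition \ref{corollary 1 ln}, the only residual pitfall being notational care in distinguishing $\partial_M$ from the Euclidean subdifferential.
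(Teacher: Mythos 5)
Your proposal is correct and follows essentially the same route as the paper, whose own (very terse) proof simply invokes the equivalence of norms in finite dimension and the fact that the minimum-norm projection onto $\argmin f$ depends on the chosen metric; you have merely made explicit the key identification $\prox^{M}_{\b_k f}=\prox^{\cH_M}_{\b_k f}$ and the metric-independence of the scalar conditions on $(\b_k)$, which the paper leaves implicit. No gap.
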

\begin{proof}
Since  in finite-dimensional spaces all norms are equivalent to $\norm{x}$, then  in theory the convergence rate for $\dot x(t)$ is unchanged with respect to any norm in $\mathbb R^n$, while the projection of the origin onto  $\text{argmin}_{\mathcal{H}}f$  depends on the chosen norm.
\end{proof}
Likewise, applying Proposition \ref{corollary 2 disc}, we have
\begin{proposition}\label{thm:M2}
Let $f$  and $(x_k)$ be as in Theorem \ref{thm:M1}, where $\b_k =  e^{\gamma k^r}.$ 
Then

$\star$  for $r=1$, $(x_k)$ is bounded and  $f(x_k)-\underset{\mathcal{H}}{\min}f = \mathcal{O} \left( e^{-\gamma k} \right)$;

$\star$ for $0<r<1$,  $(x_k)$  converges strongly to the element of minimum norm of $\text{argmin}_{\mathcal{H}}f$ with respect to $\norm{\cdot}_M$, and 
\begin{eqnarray*}
&& f(x_k)-\underset{\mathcal{H}}{\min}f = \mathcal{O} \left( e^{-\gamma k^r} \right), \;
\norm{\dot x_k}^2   = \mathcal{O} \left( \dfrac{1}{k^{1-r}} \right).
\end{eqnarray*}
\end{proposition}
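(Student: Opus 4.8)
The plan is to invoke Theorem~\ref{Th 2 disc} directly, since all of its hypotheses have been verified for $\b_k=k^me^{\g k^r}$ in the computations immediately preceding this statement. Those computations produce the common limit $\ell=\lim_k \b_{k+1}/\b_k=\lim_k \dot\b_{k+1}/\dot\b_k$, equal to $1$ when $0<r<1$ and to $e^{\g}$ when $r=1$; in either regime $\ell>0$, so \ref{H02} holds and every conclusion of Theorem~\ref{Th 2 disc} is available. The proof then reduces to substituting $\b_k$ and its increments into those conclusions and reading off the explicit orders.

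First I would record the value and subdifferential rates, which are pure substitutions. Theorem~\ref{Th 2 disc}(i) gives $f(x_k)-\min_{\cH}f=\mathcal O(1/\b_k)=\mathcal O(k^{-m}e^{-\g k^r})$, collapsing to $\mathcal O(e^{-\g k})$ when $r=1,\ m=0$. Theorem~\ref{Th 2 disc}(iii) gives $d(0,\p f(x_k))=\mathcal O(1/\b_k)=\mathcal O(k^{-m}e^{-\g k^r})$, and the same order for $\norm{\n f(x_k)}$ once $f$ is assumed differentiable.

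Next I would convert the abstract velocity bound $\norm{\dot x_k}^2=\mathcal O(\dot\b_k/\b_k+e^{-\r k})$ into an explicit rate. Writing $\dot\b_k/\b_k=\b_{k+1}/\b_k-1$ and using the expansion $e^{\g((k+1)^r-k^r)}=1+\g r\,k^{r-1}+o(k^{r-1})$ already established, for $0<r<1$ one gets $\dot\b_k/\b_k\sim \g r\,k^{r-1}=\mathcal O(1/k^{1-r})$, a term dominating the exponentially small $e^{-\r k}$; hence $\norm{\dot x_k}^2=\mathcal O(1/k^{1-r})$. When $r=1$ the ratio $\dot\b_k/\b_k\to e^{\g}-1\neq0$ does not vanish, so no genuine velocity decay is asserted; there I would only record boundedness of $(x_k)$, which follows because $E_k=\mathcal O(\dot\b_k/\b_k+e^{-\r k})$ stays bounded by \eqref{estim E_k}, so that $\tfrac{\l}{2}\norm{x_k-y_{k-1}}^2\le E_k$ together with $\norm{y_{k-1}}\le\norm{x^*}$ keeps $\norm{x_k}$ bounded.

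The strong-convergence claim is the only delicate point, because Theorem~\ref{Th 2 disc}(ii) requires exactly $\ell=1$. This happens precisely when the incremental exponent $(k+1)^r-k^r\to0$ forces $\b_{k+1}/\b_k\to1$, that is in the regime $0<r<1$, where one concludes $x_k\to x^*$ strongly; for $r=1$ one has $\ell=e^{\g}>1$, so only boundedness survives, which is consistent with the dichotomy recorded in the statement. The main obstacle is therefore not a single sharp estimate but the careful asymptotic bookkeeping of $\dot\b_k/\b_k$ and the correct identification of the threshold $\ell=1$; everything else is a direct substitution into Theorem~\ref{Th 2 disc}.
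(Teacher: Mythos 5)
Your handling of the growth conditions is correct and coincides with the paper's own route: verifying \ref{H02} for $\b_k=k^me^{\g k^r}$, reading off $\ell=1$ when $0<r<1$ versus $\ell=e^{\g}$ when $r=1$, substituting into Theorem \ref{Th 2 disc}, and expanding $\dot\b_k/\b_k\sim \g r\,k^{r-1}$ to get the $\mathcal O(1/k^{1-r})$ velocity rate. This is exactly the content of Proposition \ref{corollary 2 disc}, and your dichotomy between strong convergence ($\ell=1$) and mere boundedness ($\ell=e^{\g}>1$) is the right one.

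There is, however, a genuine gap: the proposition you were asked to prove is not about the plain iteration \eqref{proximal algorithm} but about the sequence generated by $x_{k+1}=\prox^{M}_{\b_k f}(dx_k)$, where the proximal operator is taken in the metric $\norm{x}_M=\langle Mx,x\rangle^{1/2}$ with $M=s^{-1}I-A^*A$; this is why the conclusion speaks of the minimum norm element of $\argmin_{\cH} f$ \emph{with respect to} $\norm{\cdot}_M$. Your proof never mentions $M$, so as written it establishes the Euclidean version of the statement, not the asserted one. The missing bridge is short but essential: since $0<s\norm{A}^2<1$ makes $M$ symmetric positive definite, $(\R^n,\langle M\cdot,\cdot\rangle)$ is itself a Hilbert space in which $\prox^{M}_{\b_k f}$ is precisely the proximal operator of $f$, so Theorem \ref{Th 2 disc} and hence Proposition \ref{corollary 2 disc} apply verbatim in that space; equivalence of norms in finite dimension then preserves all the stated orders of convergence, while the strong limit becomes the $M$-projection of the origin onto $\argmin_{\cH} f$ rather than the Euclidean one. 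This one-line observation is exactly what the paper supplies before invoking Proposition \ref{corollary 2 disc}, and without it the quoted theorem does not literally cover the iteration in question.
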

We avoid here giving the estimate of the norm of the gradient $\nabla f$, because the function $ f(x) = \frac{1}{2}\norm{y-Ax}^2 + g(x) $ cannot be differentiable if $g$ is not so.

It can be easily shown (see also the discussion in~\cite[Section~4.6]{chambollereview}), that the proximal mapping of $f$ as defined in \eqref{eq:minP} with the norm $\norm{\cdot}_M$ is
\begin{equation*}\label{eq:proxFBM}
{{\prox}}^M_{f}(x) = {\prox}_{s g}(x - s A^*(Ax-y)) .
\end{equation*}
Thus, our algorithm $x_{k+1}=\text{prox}_{\b_k f}^M(d x_k)$ becomes 
\begin{equation}\label{eq:ForBack}
x_{k+1}= {\prox}_{s g}(d x_k - s A^*(dA x_k-y)),
\end{equation}
which is a kind of forward-backward algorithms for the objective function in \eqref{eq:minP}.

Moreover, $f_M$ is a continuously differentiable convex function whose gradient (again in the metric $M$) is given by the standard identity
\[
\nabla f_M(x) = x - {\prox}^M_{f}(x) ,
\]
and   $\nabla f_M$ is 1-Lipschitz continuous in the metric $M$. In addition, a standard argument shows that 
\[
\argmin_{\cH} f = \mathrm{Fix}({\prox}^M_{f}) = \argmin_{\cH} f_M .
\]
We are then in position to solve \eqref{eq:minP} by simply applying IGAHD (see Section~\ref{sec:igahd}) to $f_M$. We infer from Theorem~\ref{pr.decay_E_k} and properties of $f_M$ that
\[
f({\prox}^M_f(x_k))-\min_{\R^n} f = \mathcal{O}(k^{-2}) .
\]
IGAHD and FISTA (\ie IGAHD with $\beta=0$) were applied to $f_M$ with four instances of $g$: $\ell_1$ norm, $\ell_1-\ell_2$ norm, the total variation, and the nuclear norm. The results are depicted in Figure~\ref{fig:rls}. One can clearly see that the convergence profiles observed for both algorithms agree with the predicted rate. Moreover, IGAHD exhibits, as expected, less oscillations than FISTA, and eventually converges faster.

\subsection{}
Let us extend the results of Theorem \ref{Th 2 disc} to the case of a proper lower semicontinuous
and convex function $f: \mathcal{H} \longrightarrow \mathbb{R}\cup \{+\infty\}$. 
\if{Referring to   \cite{Brezis , BaCo}, we rely on the basic properties of the Moreau envelope $f_{\g}: \mathcal{H} \longrightarrow \mathbb{R}  \;\; (\g >0)$, which is defined by
\begin{equation}\label{1s}
f_\g (x) = {\min}_{y \in \mathcal{H}} \left\lbrace f(y)+\frac{1}{2\g }\norm{x-y}^2 \right\rbrace \;\; \text{for any } x\in \mathcal{H}.
\end{equation}
Recall that,  the functions $f$ and $f_\l$ share the same optimal objective value and the same
set of minimizers
$$ {\min}_{y \in \mathcal{H}}\, f = {\min}_{y \in \mathcal{H}}\, f_\g \;\;\;\;\;\; \text{and}\;\;\;\;\;\; {\argmin}_{\mathcal{H}}\, f ={\argmin}_{\mathcal{H}}\, f_\g . $$
In addition, $f_\l$ is convex and continuously differentiable whose gradient is $\frac{1}{\g}$-Lipschitz continuous.

The unique point where the minimum value is achieved in \eqref{1s} is denoted by ${\prox} _{\g f} (x) $, and satisfies the following classical formulas: For each $\g >0, x\in \mathcal{H}$, 
\begin{itemize}
\item $ f_\g (x)=f({\prox} _{\g f} (x)) +\frac{1}{2\g}\norm{x-{\prox} _{\g f} (x)}^2  $;
\item $\nabla f_\g (x) = \frac{1}{\g} (x-{\prox} _{\g f} (x)) $;
\item ${\prox} _{\theta f_\g}(x)=\frac{\g}{ \g +\theta} x + \frac{\theta}{ \g +\theta} {\prox} _{(\g + \theta)f}(x), \;\text{ for all } \theta > 0.$
\end{itemize}
}\fi
Since the set of minimizers and the infimal value are preserved by taking the Moreau envelope, see Appendix \ref{sec-appendix}, the idea is to replace $f$ by $f_\g$ in the algorithm \eqref{proximal algorithm}. Then, \eqref{proximal algorithm} applied to $f_\g$ now reads 
$$
 x_{k+1}=\text{prox}_{\b_k f_\g} (d x_k).
$$
By using Lemma \ref{lem-basic-c}(iii), we get for $\g>0$ the following relaxed inertial proximal algorithm\\
\begin{equation}\label{prox_NS}
x_{k+1}= \frac{\g d}{\g +\b_k}x_k +\frac{\b_k}{\g +\b_k} \text{prox}_{(\g+\b_k) f}(d x_k).
\end{equation}
Let us compute the viscosity curve associated with $f_\g$, denoted by $x_{\g , \b}$.

Recall that
$
x_{\beta}  =  {\argmin}_{ \mathcal{H}} \left( f+\frac{1-d}{2\beta}\norm{.}^2 \right)  = {{\prox}} _{\frac{\beta}{1-d}f} (0) $.
Then
\begin{eqnarray}
x_{\g , \beta} &=& {{\prox}}_{\frac{\beta}{1-d}f_\g} (0) \nonumber \\
&=& \frac{\beta}{\beta+\g (1-d) }  \, {{\prox}}_{\frac{\beta+\g (1-d)}{1-d}  f} (0) \nonumber\\
&=& \frac{\beta}{\beta+\g (1-d) }  \, x_{\beta +\g (1-d)}.	\label{3s}
\end{eqnarray}
We now have all the ingredients to apply Theorem \ref{Th 2 disc} to \eqref{prox_NS}, and so we get the following result.
\begin{theorem}\label{Th1 ns}
Let $f: \mathcal{H} \longrightarrow \mathbb{R}\cup \{+\infty\}$ be a proper, lower semicontinuous and convex function.  Let $(x_k)$ be a sequence generated by the algorithm \eqref{prox_NS}.\\
Suppose $\b_k$ satisfies all conditions of Theorem \ref{Th 2 disc}. Then, for $k$ large enough, we have
\begin{eqnarray}
&&f ({{\prox}}_{\g f}(x_k))-\underset{\mathcal{H}}{\min}f = \mathcal{O} \left(   \frac{1}{\b_k} \right),	\label{4s}\\
&&\norm{ x_k-  {{\prox}}_{\g f}(x_k) } ^2  = \mathcal{O} \left(    \dfrac{1}{\b_k}\right), 	\label{6s}\\
&&\norm{ x_k- \frac{\beta _k}{\beta _k+\g (1-d)} \,x_{\beta _k +\g (1-d)} }  ^2 =  \mathcal{O} \left(     \dfrac{\dot{\beta}_{k}}{\beta _{k}}+e^{-\r k}\right) .	\label{7s}
\end{eqnarray}
We also conclude that  $(x_k)$ strongly converges to the minimum norm element of $\text{argmin}_{\mathcal{H}}f$.
\end{theorem}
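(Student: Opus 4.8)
The plan is to observe that the scheme \eqref{prox_NS} is nothing but the proximal algorithm \eqref{proximal algorithm} run on the Moreau envelope $f_\g$, and then to invoke Theorem \ref{Th 2 disc} for $f_\g$ and to translate each of its conclusions back to $f$ through the standard Moreau identities. First I would record that $f_\g$ is finite-valued, convex and differentiable on $\cH$ with $\nabla f_\g$ globally $\frac1\g$-Lipschitz (hence Lipschitz on bounded sets), that $\text{argmin}_\cH f_\g = \text{argmin}_\cH f$, and that $\min_\cH f_\g = \min_\cH f$; consequently $f_\g$ satisfies the standing hypotheses, and since it is differentiable the differentiable-case part (iii) of Theorem \ref{Th 2 disc} applies as well. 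Using Lemma \ref{lem-basic-c}(iii) exactly as in the derivation of \eqref{prox_NS}, the iterates $(x_k)$ coincide with the sequence produced by \eqref{proximal algorithm} for $f_\g$, while by \eqref{3s} the associated viscosity curve is $x_{\g,\b_k} = \frac{\b_k}{\b_k+\g(1-d)}\,x_{\b_k+\g(1-d)}$.

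With this identification, Theorem \ref{Th 2 disc} yields directly $f_\g(x_k)-\min_\cH f_\g = \mathcal O(1/\b_k)$, the viscosity estimate $\|x_k - x_{\g,\b_k}\|^2 = \mathcal O(\dot\b_k/\b_k + e^{-\r k})$, and, when $\ell=1$, strong convergence of $x_k$ to the minimum norm element of $\text{argmin}_\cH f_\g$. Since that set equals $\text{argmin}_\cH f$, the same point serves $f$, which gives the final assertion, and the viscosity estimate is precisely \eqref{7s} after inserting the formula for $x_{\g,\b_k}$. To obtain \eqref{4s} I would use the Moreau decomposition $f_\g(x_k)=f(\prox_{\g f}(x_k))+\frac1{2\g}\|x_k-\prox_{\g f}(x_k)\|^2$, which gives $\min_\cH f \le f(\prox_{\g f}(x_k)) \le f_\g(x_k)$; subtracting $\min_\cH f = \min_\cH f_\g$ then yields $f(\prox_{\g f}(x_k))-\min_\cH f \le f_\g(x_k)-\min_\cH f_\g = \mathcal O(1/\b_k)$. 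For \eqref{6s} the same decomposition gives $\frac1{2\g}\|x_k-\prox_{\g f}(x_k)\|^2 = f_\g(x_k)-f(\prox_{\g f}(x_k)) \le f_\g(x_k)-\min_\cH f$, which is again $\mathcal O(1/\b_k)$.

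I do not expect a genuinely hard step here, since the argument is a reduction; the care lies in checking that the hypotheses of Theorem \ref{Th 2 disc} really transfer to $f_\g$ — in particular that $\nabla f_\g$ is Lipschitz on bounded sets, so that invoking the differentiable case (iii) is legitimate — and in getting the direction of the value estimate right: one must exploit that $f_\g$ dominates $f\circ\prox_{\g f}$ from above while the two functions share the same infimal value and the same set of minimizers. One should also note that the strong convergence conclusion inherits the requirement $\ell=1$ from part (ii) of Theorem \ref{Th 2 disc}.
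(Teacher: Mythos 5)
Your proposal is correct and follows essentially the same route as the paper: identify \eqref{prox_NS} as the proximal algorithm applied to the Moreau envelope $f_\g$, invoke Theorem \ref{Th 2 disc} for $f_\g$, and translate the estimates back to $f$ via the identity $f_\g(x_k)=f(\prox_{\g f}(x_k))+\frac{1}{2\g}\|x_k-\prox_{\g f}(x_k)\|^2$ together with $\min_\cH f_\g=\min_\cH f$ and the formula \eqref{3s} for the viscosity curve. The only (immaterial) difference is that you obtain strong convergence directly from part (ii) of Theorem \ref{Th 2 disc} applied to $f_\g$, whereas the paper deduces it from \eqref{7s} combined with the convergence of $\frac{\b_k}{\b_k+\g(1-d)}x_{\b_k+\g(1-d)}$ to $\mbox{\rm proj}_{\argmin f}(0)$; your explicit remark that this step inherits the requirement $\ell=1$ is accurate and worth keeping.
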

\begin{proof}
By applying Theorem \ref{Th 2 disc} (i) and (ii) to the function $f_\g$, we get for $k$ large enough
\begin{eqnarray}
&&f_\g(x_k)-\underset{\mathcal{H}}{\min}f_\g = \mathcal{O} \left( \frac{1}{\b_k} \right), \label{8s}\\
&&\norm{ x_k-  x_{\g ,\b_k} }  ^2 = \mathcal{O} \left(     \dfrac{\dot{\beta}_{k}}{\beta _{k}}+e^{-\r k} \right). \label{9s}
\end{eqnarray}
Since ${\min}_{\mathcal{H}}\,f_\g = {\min}_{\mathcal{H}}\,f $, from \eqref{8s} we deduce the following estimate
\begin{equation}\label{10s}
f_\g(x_k)-\underset{\mathcal{H}}{\min}f = \mathcal{O} \left( \frac{1}{\b_k} \right).
\end{equation}
Using
\begin{equation}\label{60}
 f_\g (x_k)- \underset{\mathcal{H}}{\min}f = \left( f\left({{\prox}} _{\g f} (x_k)\right)- \underset{\mathcal{H}}{\min}f  \right) +\frac{1}{2\g}\norm{x_k-{{\prox}} _{\g f} (x_k)}^2 ,
\end{equation}
we conclude \eqref{4s}, since
\begin{equation*}
0\leq f( {{\prox}} _{\g f} (x_k))- \underset{\mathcal{H}}{\min}f \leq  f_\g (x_k)- \underset{\mathcal{H}}{\min}f.
\end{equation*}
Again combining \eqref{8s} and  \eqref{60}, we obtain  \eqref{6s}.
Also, according to \eqref{3s} and \eqref{9s}, we get  \eqref{7s}.
Now,
since
$$ 
\underset{k\longrightarrow +\infty}{\lim}\,\frac{\beta _k}{\beta _k+\g (1-d)}=1 \;\text{ and }\; \underset{k\longrightarrow +\infty}{\lim}\,\beta _k +\g (1-d)= +\infty  
$$
the estimation \eqref{7s} leads to strong convergence of  $\frac{\beta _k}{\beta _k+\g (1-d)} \, x_{\beta _k +\g (1-d)}$  to $\text{proj} _ {{\argmin}_{\mathcal{H}} f} (0)$, which in turn implies that $(x_k)$ strongly converges to the minimum norm solution.
\end{proof}
}\fi

\section{Appendix}\label{sec-appendix}
We rely on the basic properties of the Moreau envelope $f_{\g}: \mathcal{H} \longrightarrow \mathbb{R}  \;\; (\g >0)$, which is defined by
\begin{equation}\label{1s}
f_\g (x) = {\min}_{y \in \mathcal{H}} \left( f(y)+\frac{1}{2\g }\norm{x-y}^2 \right) \;\; \text{for any } x\in \mathcal{H}.
\end{equation}
Recall that,  the functions $f$ and $f_\l$ share the same optimal objective value and the same
set of minimizers
$$ {\min}_{y \in \mathcal{H}}\, f = {\min}_{y \in \mathcal{H}}\, f_\g \;\;\;\;\;\; \text{and}\;\;\;\;\;\; {\argmin}_{\mathcal{H}}\, f ={\argmin}_{\mathcal{H}}\, f_\g . $$
In addition, $f_\l$ is convex and continuously differentiable whose gradient is $\frac{1}{\g}$-Lipschitz continuous.
\begin{lemma}[{\rm\cite{Att2}, \cite[Section 12]{BaCo}}, \cite{Brezis}]\label{lem-basic-c}
The unique point where the minimum value is achieved in \eqref{1s} is denoted by ${\prox} _{\g f} (x) $, and satisfies the following classical formulas: For each $\g >0$ and $ x\in \mathcal{H}$, 
\begin{itemize}
\item[$(i)$] $ f_\g (x)=f({\prox} _{\g f} (x)) +\frac{1}{2\g}\norm{x-{\prox} _{\g f} (x)}^2  $;
\item[$(ii)$] $\nabla f_\g (x) = \frac{1}{\g} (x-{\prox} _{\g f} (x)) $;
\item[$(iii)$] ${\prox} _{\theta f_\g}(x)=\frac{\g}{ \g +\theta} x + \frac{\theta}{ \g +\theta} {\prox} _{(\g + \theta)f}(x), \;\text{ for all } \theta > 0$;
\item[$(iv)$] $  \|{\prox} _{\g f} (0)\|\leq \|x^{*}\|$, where  $x^{*}=\mbox{\rm proj}_{\argmin f} 0$; \label{2a}  \medskip
\item[$(v)$]  $\lim_{\g \rightarrow +\infty}\|{\prox} _{\g f} (0)-x^{*}\|=0 $. \label{2b}
\end{itemize}
 \end{lemma}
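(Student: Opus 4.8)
The plan is to treat the five assertions in turn, the first two being essentially definitional and the genuine work lying in (iii)--(v). For (i), since $\prox_{\g f}(x)$ is by definition the unique minimizer of $y\mapsto f(y)+\frac1{2\g}\norm{x-y}^2$, the envelope value $f_\g(x)$ is exactly that objective evaluated at $\prox_{\g f}(x)$, which is the claimed identity. For (ii) I would invoke the first-order optimality condition $\frac1\g(x-\prox_{\g f}(x))\in\partial f(\prox_{\g f}(x))$ together with the standard fact (Moreau's theorem) that $f_\g$ is everywhere differentiable with $\frac1\g$-Lipschitz gradient; differentiating the envelope and using that its inner minimizer is characterized by this inclusion yields $\nabla f_\g(x)=\frac1\g(x-\prox_{\g f}(x))$.

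For (iii) I would establish the semigroup identity $(f_\g)_\theta=f_{\g+\theta}$ by a double-minimization argument. Writing $\prox_{\theta f_\g}(x)$ as the minimizer of $z\mapsto f_\g(z)+\frac1{2\theta}\norm{z-x}^2$ and unfolding $f_\g(z)=\min_y\{f(y)+\frac1{2\g}\norm{z-y}^2\}$, the problem becomes a joint minimization over $(y,z)$. For fixed $y$ the inner minimization over $z$ of $\frac1{2\g}\norm{z-y}^2+\frac1{2\theta}\norm{z-x}^2$ is an explicit quadratic, minimized at $z=\frac{\theta y+\g x}{\g+\theta}$ with optimal value $\frac1{2(\g+\theta)}\norm{x-y}^2$; hence the outer problem reduces to minimizing $f(y)+\frac1{2(\g+\theta)}\norm{x-y}^2$, whose solution is $y=\prox_{(\g+\theta)f}(x)$. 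Substituting this back into the expression for $z$ gives exactly the convex-combination formula (iii).

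Parts (iv) and (v) concern the viscosity curve $p_\g:=\prox_{\g f}(0)$, and (v) is the main obstacle. For (iv) I would use $-\frac1\g p_\g\in\partial f(p_\g)$ and $0\in\partial f(x^*)$; monotonicity of $\partial f$ gives $\langle p_\g,p_\g-x^*\rangle\le 0$, hence $\norm{p_\g}^2\le\langle p_\g,x^*\rangle\le\norm{p_\g}\norm{x^*}$ and therefore $\norm{p_\g}\le\norm{x^*}$. For (v), boundedness from (iv) lets me extract a weak cluster point $\bar p$ of $p_\g$ as $\g\to+\infty$; comparing $p_\g$ with $x^*$ in the defining minimization shows $f(p_\g)\le\min f+\frac1{2\g}\norm{x^*}^2\to\min f$, so by weak lower semicontinuity $\bar p\in\argmin f$. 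The delicate point is to upgrade this to strong convergence toward the specific minimum-norm solution: weak lower semicontinuity of the norm together with $\norm{p_\g}\le\norm{x^*}$ forces $\norm{\bar p}\le\norm{x^*}$, while $\bar p\in\argmin f$ and minimality of $\norm{x^*}$ give $\norm{x^*}\le\norm{\bar p}$; thus $\norm{\bar p}=\norm{x^*}$, so $\bar p=x^*$ by uniqueness of the projection, and the norm convergence $\norm{p_\g}\to\norm{x^*}=\norm{\bar p}$ combined with weak convergence yields strong convergence in the Hilbert space. Since every weak cluster point equals $x^*$, the whole curve converges strongly, which proves (v).
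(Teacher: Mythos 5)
Your proposal is correct. Note that the paper itself gives no proof of this lemma; it is stated as a collection of classical facts with citations to Attouch, Bauschke--Combettes and Br\'ezis, so there is no in-paper argument to compare against. Your derivations are the standard ones from those references: (i) is definitional, (ii) follows from the optimality inclusion plus Moreau's differentiability theorem (which the paper also asserts in the surrounding text), (iii) is the usual double-minimization/interchange-of-infima computation (legitimate here since one is minimizing a jointly convex function over a product of the whole space), (iv) is the monotonicity estimate, and (v) correctly combines weak compactness, weak lower semicontinuity, the minimum-norm characterization, and the Radon--Riesz property to upgrade weak to strong convergence of the whole curve.
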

The following Lemma provides an extended  version of the classical gradient lemma which is valid for differentiable convex functions. The following version has been obtained in \cite[Lemma 1]{ACFR}, \cite{ACFR-Optimisation}.
We reproduce its proof for the convenience of the reader.

\begin{lemma}\label{ext_descent_lemma}
Let  $f: \cH \to \R$ be  a  convex function whose gradient is $L$-Lipschitz continuous. Let $s \in ]0,1/L]$. Then for all $(x,y) \in \cH^2$, we have
\begin{equation}\label{eq:extdesclem}
f(y - s \nabla f (y)) \leq f (x) + \left\langle  \nabla f (y), y-x \right\rangle -\frac{s}{2} \|  \nabla f (y) \|^2 -\frac{s}{2} \| \nabla f (x)- \nabla f (y) \|^2 .
\end{equation}
In particular, when $\argmin f \neq \emptyset$, we obtain that for any $x\in \cH$
\begin{equation}\label{eq:extdesclemb}
f(x) \geq \min_{\cH} f  +\frac{1}{2L} \| \nabla f (x)\|^2 .
\end{equation}
\end{lemma}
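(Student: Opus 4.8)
The plan is to reduce everything to the classical descent (or ``gradient'') lemma: since $\n f$ is $L$-Lipschitz, for all $u,v\in\cH$ one has $f(v)\le f(u)+\dotp{\n f(u)}{v-u}+\frac{L}{2}\norm{v-u}^2$, which follows by integrating $\frac{d}{dt}f\bigl(u+t(v-u)\bigr)$ over $[0,1]$ and using the Lipschitz bound on $\n f$. The whole difficulty is to upgrade this one-sided estimate into the two-point inequality \eqref{eq:extdesclem}, and in particular to manufacture the extra co-coercivity term $-\frac{s}{2}\norm{\n f(x)-\n f(y)}^2$; this is the step I expect to be the crux.

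First I would apply the descent lemma with $u=y$ and $v=y-s\n f(y)$. A direct computation gives $f(y-s\n f(y))\le f(y)-s\norm{\n f(y)}^2+\frac{Ls^2}{2}\norm{\n f(y)}^2$, and since $s\in\,]0,1/L]$ we have $Ls\le 1$, so $\frac{Ls^2}{2}\le\frac{s}{2}$ and hence
\[
f(y-s\n f(y))\le f(y)-\frac{s}{2}\norm{\n f(y)}^2 .
\]

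The decisive step is to bound $f(y)$ itself, and here I would introduce the auxiliary function $g(z):=f(z)-\dotp{\n f(y)}{z}$. It is convex, its gradient $\n g(z)=\n f(z)-\n f(y)$ is again $L$-Lipschitz, and since $\n g(y)=0$ the point $y$ is a global minimizer of $g$. Therefore $g(y)\le g\bigl(x-s\n g(x)\bigr)$ for every $x\in\cH$, and applying the same descent-lemma computation to $g$ (now with $u=x$, $v=x-s\n g(x)$, and once more $Ls\le1$) yields $g(y)\le g(x)-\frac{s}{2}\norm{\n g(x)}^2$. Rewriting this in terms of $f$ via $g(y)-g(x)=f(y)-f(x)-\dotp{\n f(y)}{y-x}$ and $\n g(x)=\n f(x)-\n f(y)$ gives exactly
\[
f(y)\le f(x)+\dotp{\n f(y)}{y-x}-\frac{s}{2}\norm{\n f(x)-\n f(y)}^2 .
\]
Chaining this with the previous display produces \eqref{eq:extdesclem}.

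Finally, for the particular case \eqref{eq:extdesclemb}, I would specialize \eqref{eq:extdesclem} to $y=x$: the inner-product term and the last squared-norm term both vanish, leaving $f(x-s\n f(x))\le f(x)-\frac{s}{2}\norm{\n f(x)}^2$. Since $\argmin f\neq\emptyset$ we have $\min_{\cH}f\le f(x-s\n f(x))$, whence $\min_{\cH}f\le f(x)-\frac{s}{2}\norm{\n f(x)}^2$; taking $s=1/L$ gives the claimed bound $f(x)\ge\min_{\cH}f+\frac{1}{2L}\norm{\n f(x)}^2$.
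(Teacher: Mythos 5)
Your proof is correct, and its decisive middle step takes a genuinely different route from the paper's. Both arguments open identically, applying the classical descent lemma at $y$ with step $s\le 1/L$ to get $f(y-s\nabla f(y))\le f(y)-\frac{s}{2}\|\nabla f(y)\|^2$, and both then reduce the problem to the intermediate inequality $f(y)\le f(x)+\langle\nabla f(y),y-x\rangle-\frac{s}{2}\|\nabla f(x)-\nabla f(y)\|^2$. To obtain it, the paper goes through conjugate duality: it writes the Fenchel identity $f(y)=\langle\nabla f(y),y\rangle-f^*(\nabla f(y))$, invokes the equivalence between $L$-Lipschitz continuity of $\nabla f$ and $\frac1L$-strong convexity of $f^*$ together with $(\nabla f)^{-1}=\partial f^*$, and thereby produces the co-coercivity term with the sharper constant $\frac{1}{2L}$ before relaxing it to $\frac{s}{2}$. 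You instead use the primal ``tilting'' argument: $g(z):=f(z)-\langle\nabla f(y),z\rangle$ is convex with $L$-Lipschitz gradient and is globally minimized at $y$ since $\nabla g(y)=0$, so $g(y)\le g(x-s\nabla g(x))\le g(x)-\frac{s}{2}\|\nabla g(x)\|^2$, which unravels to exactly the needed inequality. Your route is more elementary --- it reuses the descent lemma a second time and needs nothing beyond first-order optimality, avoiding conjugate calculus entirely --- while the paper's route makes the smoothness/strong-convexity duality explicit and delivers the constant $\frac{1}{2L}$ rather than $\frac{s}{2}$ as an intermediate byproduct. Your derivation of \eqref{eq:extdesclemb} by setting $y=x$ in \eqref{eq:extdesclem}, bounding $f(x-s\nabla f(x))$ below by $\min_{\cH}f$, and taking $s=1/L$ is the standard specialization and is correct.
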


\begin{proof}
Denote $y^+=y - s \nabla f (y)$. By the standard descent lemma applied to $y^+$ and $y$, and since $sL \leq 1$ we have
\begin{equation}\label{eq:descfm2}
f(y^+) \leq f(y) - \frac{s}{2}\left(2-Ls\right) \| \nabla f (y) \|^2 \leq f(y) - \frac{s}{2} \|  \nabla f (y) \|^2.
\end{equation}
We now argue by duality between strong convexity and Lipschitz continuity of the gradient of a convex function. Indeed, using Fenchel identity, we have
\[
f(y) = \dotp{\nabla f(y)}{y} - f^*(\nabla f(y)) .
\]
$L$-Lipschitz continuity of the gradient of $f$ is equivalent to $1/L$-strong convexity of its conjugate $f^*$. This together with the fact that $(\nabla f)^{-1}=\partial f^*$ gives for all $(x,y) \in \cH^2$,
\[
f^*(\nabla f(y)) \geq  f^*(\nabla f(x)) + \dotp{x}{\nabla f(y)-\nabla f(x)} + \frac{1}{2L}\norm{\nabla f(x)-\nabla f(y)}^2 .
\]
Inserting this inequality into the Fenchel identity above yields
\begin{align*}
f(y) 
&\leq - f^*(\nabla f(x)) + \dotp{\nabla f(y)}{y} - \dotp{x}{\nabla f(y)-\nabla f(x)} - \frac{1}{2L}\norm{\nabla f(x)-\nabla f(y)}^2 \\
&= - f^*(\nabla f(x)) + \dotp{x}{\nabla f(x)} + \dotp{\nabla f(y)}{y-x} - \frac{1}{2L}\norm{\nabla f(x)-\nabla f(y)}^2 \\
&= f(x) + \dotp{\nabla f(y)}{y-x} - \frac{1}{2L}\norm{\nabla f(x)-\nabla f(y)}^2  \\
&\leq f(x) + \dotp{\nabla f(y)}{y-x} - \frac{s}{2}\norm{\nabla f(x)-\nabla f(y)}^2 .
\end{align*}
Inserting the last bound into \eqref{eq:descfm2} completes the proof.
\end{proof}

\if{Let us now recall the differentiability properties of the viscosity curve.
\begin{lemma}[{\rm\cite{Att2}, \cite{AttCom}}]\label{lem-basic-cc}
The function $\varepsilon \mapsto x_{\varepsilon}$ is 
 Lipschitz continuous on the compact intervals of $]0, +\infty[$, hence almost everywhere differentiable, and the following inequality holds:
 \begin{equation}\label{44}
\| \frac{d}{d\varepsilon}\left( x_{\varepsilon} \right)\| \leq \frac{\|x^{*}\|}{\varepsilon}.
\end{equation} 
 \end{lemma}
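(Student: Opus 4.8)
The plan is to work from the first-order optimality condition characterising the viscosity curve and to exploit the monotonicity of $\partial f$. Recall that $x_\varepsilon$ is the unique minimiser of $f+\frac{\varepsilon}{2}\norm{\cdot}^2$, so that Fermat's rule gives the inclusion $-\varepsilon\, x_\varepsilon\in\partial f(x_\varepsilon)$. First I would write this inclusion at two parameters $0<\varepsilon_1<\varepsilon_2$ and insert the two subgradients $p_i=-\varepsilon_i x_{\varepsilon_i}$ into the monotonicity inequality $\langle p_1-p_2,\,x_{\varepsilon_1}-x_{\varepsilon_2}\rangle\ge 0$.

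Expanding this inner product and regrouping the cross terms into $\norm{x_{\varepsilon_1}-x_{\varepsilon_2}}^2$ should produce the clean estimate
\[
\varepsilon_2\norm{x_{\varepsilon_1}-x_{\varepsilon_2}}^2\le(\varepsilon_2-\varepsilon_1)\,\langle x_{\varepsilon_1},\,x_{\varepsilon_1}-x_{\varepsilon_2}\rangle .
\]
Applying Cauchy--Schwarz to the right-hand side and dividing by $\norm{x_{\varepsilon_1}-x_{\varepsilon_2}}$ (the bound being trivial when this increment vanishes) then yields
\[
\norm{x_{\varepsilon_1}-x_{\varepsilon_2}}\le\frac{\varepsilon_2-\varepsilon_1}{\varepsilon_2}\,\norm{x_{\varepsilon_1}} .
\]
Since Lemma \ref{lem-basic-c}(iv) provides the uniform bound $\norm{x_\varepsilon}\le\norm{x^*}$, on every compact interval $[a,b]\subset\,]0,+\infty[$ the increment is controlled by $\frac{\norm{x^*}}{a}\,\abs{\varepsilon_1-\varepsilon_2}$, which is precisely Lipschitz continuity with constant $\norm{x^*}/a$.

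Lipschitz continuity on compact subintervals makes $\varepsilon\mapsto x_\varepsilon$ absolutely continuous; because $\cH$ is a Hilbert space, and hence enjoys the Radon--Nikodym property, such a curve is differentiable at almost every $\varepsilon$. At any point of differentiability I would take $\varepsilon_1=\varepsilon$ and $\varepsilon_2=\varepsilon+h$ with $h>0$ in the displayed estimate, obtaining $\frac{\norm{x_{\varepsilon+h}-x_\varepsilon}}{h}\le\frac{\norm{x_\varepsilon}}{\varepsilon+h}$, and then let $h\to 0^+$. The left-hand side converges to $\norm{\frac{d}{d\varepsilon}x_\varepsilon}$ while the right-hand side converges to $\frac{\norm{x_\varepsilon}}{\varepsilon}\le\frac{\norm{x^*}}{\varepsilon}$, which is the announced bound $\norm{\frac{d}{d\varepsilon}x_\varepsilon}\le\frac{\norm{x^*}}{\varepsilon}$.

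I anticipate no serious difficulty in the algebra: the whole argument rests on monotonicity of $\partial f$ plus a single use of Cauchy--Schwarz. The only point deserving care is the passage from Lipschitz continuity to almost-everywhere differentiability, which in a possibly infinite-dimensional $\cH$ cannot invoke Rademacher's theorem directly; I would instead appeal to the Radon--Nikodym property of Hilbert spaces (equivalently, cite \cite{Att2,AttCom}). It is also worth recording that the uniform bound $\norm{x_\varepsilon}\le\norm{x^*}$ used above is independent of the present lemma, following from nonexpansiveness of the proximal map together with the identity $x_\varepsilon=\prox_{\frac1\varepsilon f}(0)$ of Lemma \ref{lem-basic-c}(iv).
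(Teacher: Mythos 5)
Your argument is correct, and it is essentially the canonical proof of this lemma: the paper itself offers no proof, stating the result as a quotation from \cite{Att2} and \cite{AttCom}, and the proof in those references is exactly your computation --- write the optimality inclusion $-\varepsilon x_\varepsilon \in \partial f(x_\varepsilon)$ at two parameters, test monotonicity of $\partial f$, apply Cauchy--Schwarz to get $\norm{x_{\varepsilon_1}-x_{\varepsilon_2}}\le \frac{\varepsilon_2-\varepsilon_1}{\varepsilon_2}\norm{x_{\varepsilon_1}}$, invoke the uniform bound $\norm{x_\varepsilon}\le\norm{x^*}$, and pass from Lipschitz continuity to a.e.\ differentiability via the Radon--Nikodym property of $\cH$ (correctly noting that Rademacher is not the right tool here). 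One small correction to your closing aside: nonexpansiveness of $\prox_{\frac{1}{\varepsilon}f}$ together with the fixed-point property $\prox_{\frac{1}{\varepsilon}f}(x^*)=x^*$ yields only $\norm{x_\varepsilon - x^*}\le\norm{x^*}$, hence $\norm{x_\varepsilon}\le 2\norm{x^*}$; the sharp bound of Lemma \ref{lem-basic-c}(iv) follows instead from monotonicity against $0\in\partial f(x^*)$, which gives $\langle x_\varepsilon,\,x_\varepsilon - x^*\rangle\le 0$ and so $\norm{x_\varepsilon}^2\le\langle x_\varepsilon,\,x^*\rangle\le\norm{x_\varepsilon}\,\norm{x^*}$. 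Since your proof only uses Lemma \ref{lem-basic-c}(iv) as stated by the paper, this misattribution is cosmetic and creates no gap.
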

}\fi

\medskip

\end{document}